\newcommand{\na}{\nabla}
\newcommand{\RR}{\mathbb{R}}
\newcommand{\AAA}{\mathcal{A}}
\newcommand{\KKK}{\mathcal{K}}
\newcommand{\LLL}{\mathcal{L}}
\newcommand{\III}{\mathcal{I}}
\newcommand{\MMM}{\mathcal{M}}
\newcommand{\SSS}{\mathcal{S}}
\newcommand{\lam}{\lambda}
\newcommand{\obj}{{\text{obj}}}
\newcommand{\sol}{{\text{sol}}}
\newcommand{\norm}[1]{{\|#1\|}}
\newcommand{\half}{{\tfrac{1}{2}}}
\newcommand{\yl}[1]{{\color{teal}#1}}
\newcommand{\ew}[1]{{{\color{magenta}(#1)}}}
\DeclareMathOperator*{\locmin}{locmin}
\crefname{hypothesis}{Hypothesis}{Hypotheses}
\newcommand{\Footnote}[1]{}
\newcommand{\SOUT}[1]{}
\newcommand{\SOUTr}[1]{}
\title{A Decomposition Framework for Nonlinear Nonconvex Two-Stage Optimization}
\author{
Yuchen Lou\thanks{Department of Industrial Engineering and Management Sciences, Northwestern University.  This author was partially supported by National Science Foundation grant DMS-2012410.  E-mail: \email{yuchenlou2026@u.northwestern.edu}}\and
Xinyi Luo\thanks{Department of Industrial Engineering and Management Sciences, Northwestern University.  These authors were partially supported by National Science Foundation grant DMS-2012410 and by the U.S.\ Department of Energy (DOE) Office of Electricity (OE) Advanced Grid Modeling (AGM) Research Program under program manager Ali Ghassemian.  E-mail: \email{xinyiluo2023@u.northwestern.edu}, \email{andreas.waechter@northwestern.edu}} \and
Andreas W\"achter\footnotemark[2] \and
Ermin Wei\thanks{Department of Electrical and Computer Engineering, Northwestern University.  This author was partially supported by National Science Foundation grant CMMI-2024774.  E-mail: \email{ermin.wei@northwestern.edu}}}
\begin{document}

\maketitle

\begin{abstract}
We propose a new decomposition framework for continuous nonlinear constrained two-stage optimization, where both first- and second-stage problems can be nonconvex.
A smoothing technique based on an interior-point formulation renders the optimal solution of the second-stage problem differentiable with respect to the first-stage parameters.
As a consequence, efficient off-the-shelf optimization packages can be utilized.  We show that the solution of the nonconvex second-stage problem behaves locally like a differentiable function so that existing proofs can be applied {to prove the convergence of the iterates to first-order optimal points for} the first-stage.  
We also prove fast local convergence of the algorithm as the barrier parameter is driven to zero.  
Numerical experiments for large-scale instances demonstrate the computational advantages of the decomposition framework.


\end{abstract}

\begin{keywords}
Two-stage optimization, log-barrier smoothing, interior-point method, sequential quadratic programming, parametric optimization
\end{keywords}


\section{Introduction}
\begin{sloppypar}
In this paper, we propose an algorithmic decomposition framework that is capable of utilizing efficient second-order methods for nonlinear two-stage problems, where the first-stage (master) problem is given by 
\end{sloppypar}
\noindent
\begin{equation} \label{eq:intro_master}
\begin{split}
    \min_{x\in\mathbb{R}^{n_0}} \quad & f_0(x) + \sum_{i=1}^N\hat{f}_i(x) \\
        {\rm s.t.} \quad & c_0(x) \leq 0,
\end{split}
\end{equation}
and the second-stage problems (subproblems) are given by
\begin{equation} \label{eq:intro_subproblem}
\begin{split}
    \hat{f}_i(x) = \min_{y_i\in\mathbb{R}^{n_i}}\quad & f_i(y_i;x) \\
        {\rm s.t.} \quad & c_i(y_i;x) \leq 0.
\end{split}
\end{equation}
Here, $f_i:\RR^{n_i}\to\RR$ and $c_i:\RR^{n_i}\to\RR^{m_i}$ $(i=0,\ldots,N)$ are assumed to be sufficiently smooth but not necessarily convex.
{In~\eqref{eq:intro_subproblem}, we adopt the standard convention in nonlinear optimization of writing $\hat{f}_i(x)$ with the $\min$ operator to reflect the optimization goal.  
For nonconvex problems, however, $\hat{f}_i(x)$ is practically treated as set-valued due to the presence of multiple local minima encountered by solvers.  
We address this subtlety in more detail later after~\eqref{eq:intro_subproblem_local}.}
%

Problems of this type arise in a wide range of practical applications, such as transportation \cite{barbarosoǧlu2004two,liu2009two} and optimal power flow for electricity system \cite{lubin2011scalable,roald2023power,tu2021two,tu2020two}.
%
The individual subproblems typically correspond to different random scenarios for estimating an expected value by sample average approximation, and it is often desirable to choose a large $N$ for an accurate estimation.
%
In some circumstances, the decomposition may correspond to other criteria of partition, such as a geographic separation of electrical transmission and distribution systems \cite{tu2020two}.

Since the subproblems are defined independently of each other when $x$ is specified, decomposition algorithms are attractive due to their abilities to exploit parallel computing resources by solving subproblems simultaneously.  This becomes increasingly desirable as $N$ grows larger.
Moreover, decomposition algorithms are essential when problem instances  exceed the memory capacity of a single machine.
Distributing subproblems across multiple compute nodes effectively addresses this issue.



A major challenge for solving two-stage problems is that the value functions $\hat{f}_i(x)$ are not necessarily differentiable.
{For example}, derivatives typically do not exist at those $x$'s, where the set of constraints active at the optimal solution for~\eqref{eq:intro_subproblem} changes as $x$ varies.
{Nonsmoothness can also arise from changes in the sign of the Hessian as $x$ varies; see Example~\ref{ex:curvature}.}
%
This prevents the direct application of efficient gradient-based optimization methods\SOUT{ to solve the master problem directly}. 
While methods from nonsmooth optimization can in principle be applied, their performance is expected to be inferior to approaches that leverage the underlying smoothness of the subproblems~\eqref{eq:intro_subproblem}.

Benders' decomposition is the most commonly used approach for instances with \textit{convex} second-stage problems \cite{bnnobrs1962partitioning,geoffrion1972generalized}.
This technique maintains a master problem where the nonsmooth second-stage functions $\hat f_i$ are approximated by an increasing set of supporting hyperplanes, corresponding to subgradients of $\hat f_i$.  The hyperplanes are computed iteratively from the optimal solutions of the subproblems for different values of $x$.  This approach is also able to handle discrete variables in the first stage.

However, few methods have been developed for instances with nonconvex second-stage problems \cite{yoshio2021nested}.
In principle, one could consider computing the global minima of nonconvex subproblems, but this is in general NP-hard and requires sophisticated global optimization methods, such as spatial branch-and-bound \cite{neumaier2004complete}.
\SOUT{These methods are also limited in the size of the problems they can handle and require a lot of computation time.}
In contrast, we focus on the more practical use of Newton-based second-order methods, for which efficient and robust software implementations exist.
However, these are only guaranteed to find local minima or stationary points for the second-stage problems.
This might result in multiple candidates for values of $\hat{f}_i(x)$, meaning $\hat{f}_i(x)$ is not a well-defined function.
Strictly speaking, it is more precise to define the set-valued function
\begin{equation} \label{eq:intro_subproblem_local}
\begin{split}
    \hat{f}_i(x) \in \locmin_{y_i\in\mathbb{R}^{n_i}}\ & f_i(y_i;x) \\
        {\rm s.t.} \ & c_i(y_i;x) \leq 0,
\end{split}
\end{equation}
where $\locmin$ stands for the set of local minima of the second-stage problem.
{
Nevertheless, the main contribution of this paper is an algorithm that identifies a smooth trajectory of local solutions, so that $\hat{f}_i$ is locally well-defined.
{
Throughout the paper, our analysis focuses on the set-valued map $\hat{f}_i$ defined in~\eqref{eq:intro_subproblem_local}, though we occasionally refer to~\eqref{eq:intro_subproblem} to describe the subproblem objective, following standard practice in the nonconvex optimization literature.  
For notational simplicity, we continue to write $\hat{f}_i(x) = \min \ldots$, implicitly assuming that a particular local minimum has been selected and followed.  
This convention facilitates applying functional operations to $\hat{f}_i(x)$.
We acknowledge that such a selection is nontrivial and formalize it later via the notions of \textit{solution maps} (Section~\ref{sec:behaviors_nonconvex}) and \textit{reference points} (Section~\ref{sec:global_convergence}).  
Our analysis shows that, under mild assumptions, the proposed algorithm asymptotically identifies a local minimum, thereby resolving the set-valued ambiguity.
}
}

\SOUT{
Nevertheless, since our goal is to apply a second-order nonlinear optimization method to the master problem \eqref{eq:intro_master}, the subproblem solver returning specific local minima (or stationary points) needs to do this in a way such that the values for $\hat f_i(x)$ ``look like'' as if they result (locally) from a well-defined differentiable function.
The main contributions of this paper are a barrier-based smoothing procedure and a warm-start mechanism which achieves this.
As a result, it is possible to take the advantage of existing efficient and robust software implementations for the solutions of both the master and the subproblems.
}

\subsection{Related research}

As mentioned, Benders' decomposition is a classical method for solving two-stage optimization problems when the functions involved are linear \cite{bnnobrs1962partitioning}. 
For cases where the functions are nonlinear but convex, several extensions have been proposed, such as generalized Benders' decomposition \cite{geoffrion1972generalized} and the augmented Lagrangian method \cite{rockafellar1991scenarios,ruszczynski1995convergence}. 
To address problems without the convexity, Braun introduced the framework of collaborative optimization \cite{braun1996collaborative} but it may fail to converge to a minimizer due to degeneracy \cite{alexandrov2002analytical,demiguel2000analysis}. 

In \cite{demiguel2006local}, a gradient-based method for nonlinear two-stage problems was proposed, based on $l_1$- and $l_2$-penalty smoothing of $\hat{f}_i(x)$,
but tuning the penalty parameter can be challenging \cite{brown2006evaluation}.
A sequence of recent works \cite{borges2021regularized,demiguel2008decomposition,luo2023efficient,mehrotra2007decomposition,tu2021two,tu2020two,yoshio2021nested}, including this paper, consider using a log-barrier smoothing technique. 
\cite{demiguel2008decomposition,tu2021two} provided the fundamental framework of log-barrier smoothing in two-stage optimization, and \cite{luo2023efficient} illustrated an efficient algorithm implementation.
\cite{borges2021regularized} further introduced a Tikhonov regularization term into $\hat{f}_i(x)$ and analyzed its asymptotic behaviors.
We also note that two-stage optimization can be viewed as a special case of bilevel optimization \cite{dempe2020bilevel}.
In particular, \cite{ishizuka1992double,jiang2024barrier,lin2003smoothing,shimizu1981new} share a similar methodology of smoothing and evaluation of derivatives as this paper.
However, none of these works study the case when the subproblems~\eqref{eq:intro_subproblem} are nonconvex.

Alternatively, one can reformulate the two-stage problem as an undecomposed single-stage problem, where $\hat f_i$ are substituted into the master problem to obtain one large monolithic optimization problem; see~\eqref{eq:undecomp_Obj} in Section~\ref{sec:global_convergence}.
This monolithic problem can then be solved using an interior-point method, leveraging parallelizable decomposition techniques for the associated linear systems, e.g., the Schur complement method \cite{demiguel2008decomposition,lubin2011scalable,zavala2008interior}. 
{A notable advantage of our approach is its ease of initialization by a presolve: we begin by solving the first-stage problem once with the second-stage variables fixed. 
Additionally, this method offers the potential benefit of decomposing highly nonlinear instances into subproblems that may exhibit faster convergence.}

For fast local convergence, \cite{demiguel2006local} proved a superlinear rate for a decomposition algorithm whose smoothing parameter is fixed.
\cite{demiguel2008decomposition,tammer1987application} established the superlinear rate for decreasing smoothing parameters under, however, a rather restrictive assumption: the linear independence constraint qualification (LICQ) holds for all the subproblems \cref{eq:intro_subproblem}, also known as Strong LICQ (SLICQ) \cite{tammer1987application}.
SLICQ is not likely to hold in practice (see Example~\ref{ex:bilinear}), and we do not assume it in our analysis.

\subsection{Contributions and outline}

\Footnote{\ew{I'm ok with the format here, would personally prefer a more bullet point style with emphasis on a few points}}
Our work goes beyond previously proposed methods for nonlinear two-stage optimization.
%
To the best of our knowledge, it is the first method that handles nonconvexity of the subproblems in a natural manner.
Our method is capable to seek local solutions and utilizes state-of-the-art nonlinear optimization algorithms and their efficient software implementations. \SOUT{for both the first- and second-stage problems.}

After introducing the proposed smoothing technique in Section~\ref{sec:obj_smoothing}, we demonstrate in Section~\ref{sec:sol_smoothing} that a smoothing approach used in previous works \cite{borges2021regularized,demiguel2006local,demiguel2008decomposition} has the undesired property that it can introduce nonconvexity {and lead to spurious solutions}, even for convex second-stage instances. 
\SOUT{As a result, it may lead to spurious solutions that do not correspond to stationary points of the optimization problem. }This does not occur for the smoothing technique used in this work.

In Section~\ref{sec:behaviors_nonconvex}, we explore the challenges caused by the non-uniqueness of local minima in nonconvex second-stage problems.   
By providing small concrete examples, we give the intuition behind our proposed concept of solution maps that make it possible to define a local second-stage value function $\hat f_i$.  
After stating the decomposition framework formally in Section~\ref{sec:algorithm}, we prove in Section~\ref{sec:hat_f_is_C2} that a warm-starting mechanism for a second-order subproblem solver computes such a function locally.  
In Section~\ref{sec:SQP_global_convergence_fixed_mu} we show that this result enables us to extend existing \SOUTr{global }convergence proofs from nonlinear optimization to the master problem.  
As a specific example, we consider a sequential quadratic programming (SQP) method with an $\ell_1$-penalty function.
In Section~\ref{sec:global_conv_changing_mu} we prove the asymptotic \SOUTr{global }convergence with diminishing smoothing parameters.

We also propose, in Section~\ref{sec:local_convergence}, a strategy that yields a provable superlinear convergence rate of the overall algorithm under standard nondegeneracy assumptions.\SOUT{In the limit, it requires only one iteration of the second-stage solver.}  
Importantly, we demonstrate that the strategy can still be executed in a distributed manner, with computations readily available from the original framework.

Finally, in Section~\ref{sec:numerical}, we examine the practical performance of the proposed framework. Our C++ implementation is based on an SQP solver and an interior-point solver.
It is validated that our framework outperforms a state-of-the-art nonlinear optimization solver and can benefit well from parallel computational resources.

\subsection{Notation}
Throughout the paper, $|\cdot|$ denotes the $\ell_1$-norm and $\|\cdot\|$ denotes the $\ell_2$-norm.
Unless specified, the vector spaces considered in this paper are coped with the $\ell_2$-norm.
Given a vector $x$, we write the vector space in which $x$ stays as $\mathbb{R}^{n_x}$ if the dimensionality is not specified beforehand.
Given $x\in\mathbb{R}^{n_x}$ and $r>0$, we write $B(x,r)$ as the open ball centered at $x$ with radius $r$, {and $\bar B(x,r)$ as the closure of $B(x,r)$.}
\SOUTr{Given $(x,y)\in\mathbb{R}^{n_x+n_y}$ and $r>0$, $B_x((x,y),r)$ denotes the projection of $B((x,y),r)\subset \mathbb{R}^{n_x+n_y}$ onto $\mathbb{R}^{n_x}$.}

\section{Smoothing the second-stage problem}
\label{sec:smoothing}

In this section, we introduce two differentiable approximations $\hat f_i(x;\mu)$ of $f_i(x)$ depending on a smoothing parameter $\mu > 0$, such that $\lim_{\mu \to 0} \hat{f}_i(x;\mu) = \hat{f}_i(x)$ for all $x$.
\SOUT{We will also discuss how to compute the derivatives of $\hat{f}_i(\cdot;\mu)$, so that smooth optimization methods can be implemented.}
Correspondingly, we define a smoothed master problem as
\begin{equation} \label{eq:smooth_master}
\begin{split}
    \min_{x} \quad & f_0(x) + \sum_{i=1}^N \hat{f}_i(x;\mu) \\
        {\rm s.t.} \quad & c_0(x)\leq 0. 
\end{split}
\end{equation}
The basic idea of the proposed algorithm is to solve \eqref{eq:smooth_master} repeatedly for diminishing values of $\mu$.
{Because $\hat f_i(\cdot; \mu)$ is constructed to be differentiable, it is amenable to standard gradient-based methods.  
Moreover, if  $f_i$ and $c_i$ are sufficiently smooth, $\hat f_i(\cdot; \mu)$ will be twice differentiable, allowing the use of second-order methods as well.
This smooth reformulation is key to our goal of enabling fast practical convergence by leveraging second-order optimization techniques.}

\subsection{Objective smoothing}
\label{sec:obj_smoothing}
The nonsmoothness of the value functions  $\hat{f}_i(x)$ can be caused by a change of the set of active constraints  at the optimal solution as $x$ is varied.  
To address this issue, we convert the nonlinear inequality constraints into equality constraints by introducing nonnegative slack variables.
Then, they are handled by $\log$-barrier terms that are added to the objective function.
This leads us to the well-known barrier-function formulation of the subproblem:
\begin{subequations} \label{eq:obj_smoothing}
\begin{align}
    \hat{f}^\obj_i(x;\mu) := \min_{y_i,s_i}\quad & f_i(y_i;x)-\mu \sum_j \ln(s_{ij}) \\
        {\rm s.t.} \quad & c_i(y_i;x)+s_i = 0. && [\lam_i]\label{eq:obj_smoothing_constr}
\end{align}
\end{subequations}
We name this smooth approximation $\hat{f}^\obj_i(x;\mu)$ as \textit{objective smoothing}.
Here, $\mu>0$ is the barrier parameter, and it is well-known that solutions of the original subproblem \eqref{eq:intro_subproblem} can be recovered as limit points of optimal solutions of the barrier problem \eqref{eq:obj_smoothing} as $\mu\to0$ \cite{nocedal1999numerical}.
In our context, we can interpret $\mu$ as a parameter that determines the degree of smoothing.
The vector $\lam_i$ denotes the multipliers for the constraints \eqref{eq:obj_smoothing_constr}.

In order to enable an easier evaluation of derivatives of $\hat f_i^{\rm obj}(\cdot,\mu)$, we let $\tilde x_i\in\mathbb{R}^{n_0}$ be a copy of $x$ in the $i$-th subproblem, and equivalently rewrite~\eqref{eq:obj_smoothing} as
\begin{subequations} \label{eq:obj_smoothing_proj}
\begin{align}
    \hat{f}^\obj_i(x;\mu) := \min_{y_i,s_i,\tilde x_i}\quad & f_i(y_i;\tilde x_i)-\mu \sum_j \ln(s_{ij}) \\
        {\rm s.t.} \quad & c_i(y_i;\tilde x_i)+s_i = 0, && [\lam_i] \\
        & \tilde x_i-x = 0. && [\eta_i]\label{eq:x_copy_constr}
\end{align}
\end{subequations}
%

In order to compute derivatives of $\hat f^\obj_i(x;\mu)$, we first introduce the primal-dual first-order KKT optimality conditions for \eqref{eq:obj_smoothing_proj}, namely
\begin{equation}\label{eq:smooth_subprob_optcond}
  F_i(y_i, \tilde x_i, s_i, \lambda_i,\eta_i; x, \mu) =
  \begin{pmatrix}
   \na_{y_i} \LLL_i (y_i, \tilde x_i, s_i,\lambda_i,\eta_i; x)\\
   \na_{\tilde x_i} \LLL_i (y_i, \tilde x_i, s_i,\lambda_i,\eta_i; x) \\
    s_i \circ \lambda_i - \mu e \\
        c_i(y_i;\tilde x_i) + s_i & \\
        \tilde x_i-x 
  \end{pmatrix}
        = 0,
\end{equation}
where $\circ$ stands for the element-wise product, $e$ is the vector of all ones in $\mathbb{R}^{m_i}$, and $\mathcal{L}_i$ is the Lagrangian function corresponding to the smoothed subproblem \eqref{eq:obj_smoothing_proj}, i.e., 
\begin{equation*}
    \mathcal{L}_i(y_i, \tilde x_i, s_i, \lambda_i,\eta_i; x) = f_i(y_i;\tilde x_i) + (c_{i}(y_i;\tilde x_i)+s_i)^T \lambda_i+(\tilde x_i-x)^T\eta_i.
\end{equation*}
Let 
\begin{equation}
\label{eq:v_i_definition}
    v_i^*(x;\mu) = (y_i^*(x;\mu), \tilde x_i^*(x;\mu), s_i^*(x;\mu), \lambda_i^*(x;\mu), \eta_i^*(x;\mu))
\end{equation}
denote a KKT point for a given $x$, i.e., $F_i(v_i^*(x;\mu);x,\mu)=0$.
Note $\tilde x_i^*(x;\mu)=x$, and both $s_i^*$ and $\lambda_i^*$ are positive.
Assuming that $\na_{v_i} F_i(v_i^*(x;\mu);x,\mu)$ is nonsingular (see Section~\ref{sec:nondegenerate_justification} for justification), by sensitivity analysis \cite[ Chapter 11.7]{luenberger1984linear} and the implicit function theorem 
we have that
\begin{equation}\label{eq:na_hat_f}
    \na_x \hat{f}_i^{\obj}(x;\mu)=-\eta_i^*(x;\mu),\quad \na^2_{xx}\hat{f}_i^{\obj}(x;\mu)=-\na_x\eta_i^*(x;\mu).
\end{equation}
Note that the optimal multipliers $\eta_i^*$ are usually an output of the subproblem solver and can be obtained without extra work.
Furthermore, the implicit function theorem yields that $\na_{x}v_i^*(x;\mu)$ can be computed as the solution of the (matrix) linear system
\begin{equation}
\label{eq:na_vstar}
\na_{v_i}F_i(v^*_i(x;\mu);x,\mu)^T\na_{x}v_i^*(x;\mu)^T = - \na_{x}F_i(v^*_i(x;\mu);x,\mu)^T,
\end{equation}
which gives the second derivatives of $\hat f_i^{\obj}$ according to~\eqref{eq:na_hat_f}.

From a computational perspective it is beneficial to notice that, in {many} applications, a subproblem~\eqref{eq:intro_subproblem} depends only on a small subvector $x_i$ of $x$; {see, e.g.,~\cite{lubin2011scalable,tu2020two,yoshio2021nested}, where the modeling typically introduces master variables into the subproblems via $P_ix$, using a projection matrix $P_i$ that selects the relevant subvector}.
As a consequence, the right-hand side of \eqref{eq:na_vstar} has only as many columns as $n_{x_i}$ instead of $n_x$, resulting in much less work.
Also, the left-most matrix in the linear system \eqref{eq:na_vstar} is identical to the one that a Newton-based algorithm for \eqref{eq:obj_smoothing_proj} uses in every iteration; see, e.g., Algorithm~\ref{alg:newton}.  Therefore, the internal linear algebra routines in such an algorithm can be utilized for \eqref{eq:na_vstar} without much additional programming effort.


\subsection{Solution smoothing}
\label{sec:sol_smoothing}

As an alternative to the previous approach, we define the approximation called \textit{solution smoothing} as
\begin{equation}
\label{eq:sol_smoothing}
    \hat f_i^\sol(x;\mu)=f_i(y_i^*(x;\mu);x)
\end{equation}
with the subvector $y_i^*(x;\mu)$ of $v_i^*(x;\mu)$ solving~\eqref{eq:smooth_subprob_optcond}.  
Since $y_i^*(x;\mu)$ is differentiable by the implicit function theorem, the chain rule implies that $\hat f_i^\sol(\cdot;\mu)$ is also differentiable. 

However, applying the chain rule to \eqref{eq:sol_smoothing} twice to get $\na^2_{xx}\hat f_i^\sol(x;\mu)$ results in the necessity of computing the second derivatives of $v_i^*(x;\mu)$, which requires more work than solving \eqref{eq:na_vstar}.  Furthermore, this procedure involves second derivatives of $F_i$, which requires computing the third derivatives of $f_i$ and $c_i$; see \eqref{eq:smooth_subprob_optcond}.

This approach has been used in \cite{borges2021regularized,tu2020two}.
In addition to the increased computational costs compared to objective smoothing, it has another significant drawback.
Suppose the original subproblem~\eqref{eq:intro_subproblem} is convex with respect to $x$ and $y_i$, then classical convex analysis guarantees that $\hat{f}^\obj_i(x;\mu)$~\eqref{eq:obj_smoothing} is convex;
see, e.g., \cite[Lemma 3]{borges2021regularized}.\SOUT{\cite{still2018lectures} [Theorem 5.2] and}\SOUT{This is also evident in Figure~\ref{fig:smoothing_example}.}
However, $\hat f_i^\sol(x;\mu)$ can be nonconvex.
To see this, let us consider the following example.

\begin{example}\label{ex:sol_smoothing}
\begin{equation} \label{eq:nonconvex_sample}
\begin{aligned}
    \min_{x\in\mathbb{R}}\quad & \hat{f}_1(x)  &\qquad \hat{f}_1(x)=\min_{y_{11},y_{12}\in\mathbb{R}}\quad & \tfrac{3}{2}\sqrt{2}y_{11}-\tfrac{1}{2}\sqrt{2}y_{12} \\
        {\rm s.t.} \quad &  x\in[0.1,2] & {\rm s.t.} \quad & y_{11}+y_{12}=x, \quad y_{11},y_{12}\geq 0.
\end{aligned}
\end{equation}
\end{example}
%
%
The smoothed subproblem is given by:
\begin{equation}\label{eq:nonconvex_sample_sol}
\begin{split}
    y_1^*(x;\mu):=\arg\min_{y_{11},y_{12}\in\mathbb{R}}\quad & \tfrac{3}{2}\sqrt{2}y_{11}-\tfrac{1}{2}\sqrt{2}y_{12}-\mu\sum_{j=1}^2\log(s_{1j}) \\
        {\rm s.t.} \quad & y_{11}+y_{12}=x,
        \quad -y_{11}+s_{11}=0, \quad -y_{12}+s_{12}=0.
\end{split}
\end{equation}
Note that the subproblem in \eqref{eq:nonconvex_sample} is a linear program, which is clearly convex with respect to $x$ and $y_i$ jointly.
However, $\hat f_i^\sol(x;\mu)$ is nonconvex as can be seen in Figure~\ref{fig:smoothing_example}.
Indeed, we can write out the closed form of the solutions to~\eqref{eq:nonconvex_sample_sol} as:
\begin{equation*}
    y_{11}^*(x;\mu)=\frac{\mu+\sqrt{2}x-\sqrt{\mu^2+2x^2}}{2\sqrt{2}},\quad y_{12}^*(x;\mu)=\frac{-\mu+\sqrt{2}x+\sqrt{\mu^2+2x^2}}{2\sqrt{2}}.
\end{equation*}
Substituting this into the objective yields $\hat f_i^\sol(x;\mu)=\mu+\frac{\sqrt{2}}{2}x-\sqrt{\mu^2+2x^2}$ and $\na_{xx}\hat f_i^\sol(x;\mu)=-\frac{2\mu^2}{(2x^2+\mu^2)^{3/2}}<0$.
Therefore, $\hat f_i^\sol(x;\mu)$ is nonconvex for all $\mu>0$.
In contrast, $\hat f_i^\obj(x;\mu)$ is convex; see Figure~\ref{fig:smoothing_example}.


\begin{figure}
\centering
\includegraphics[width = 0.43\textwidth]{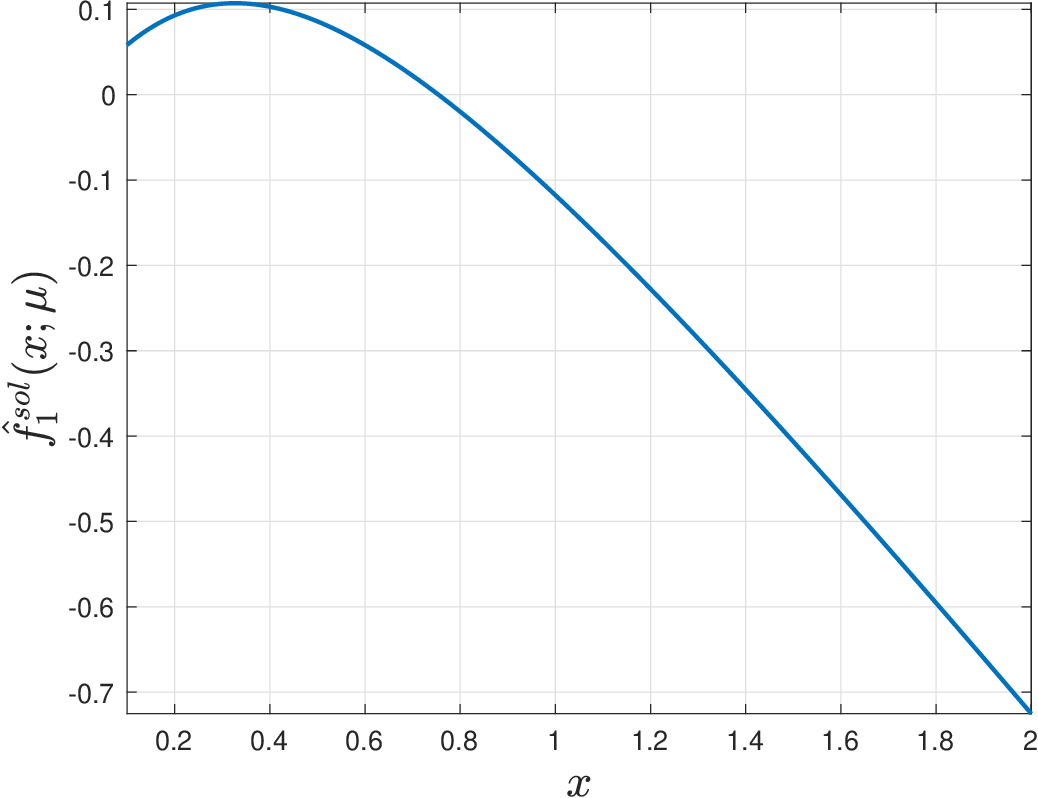}
\includegraphics[width = 0.43\textwidth]{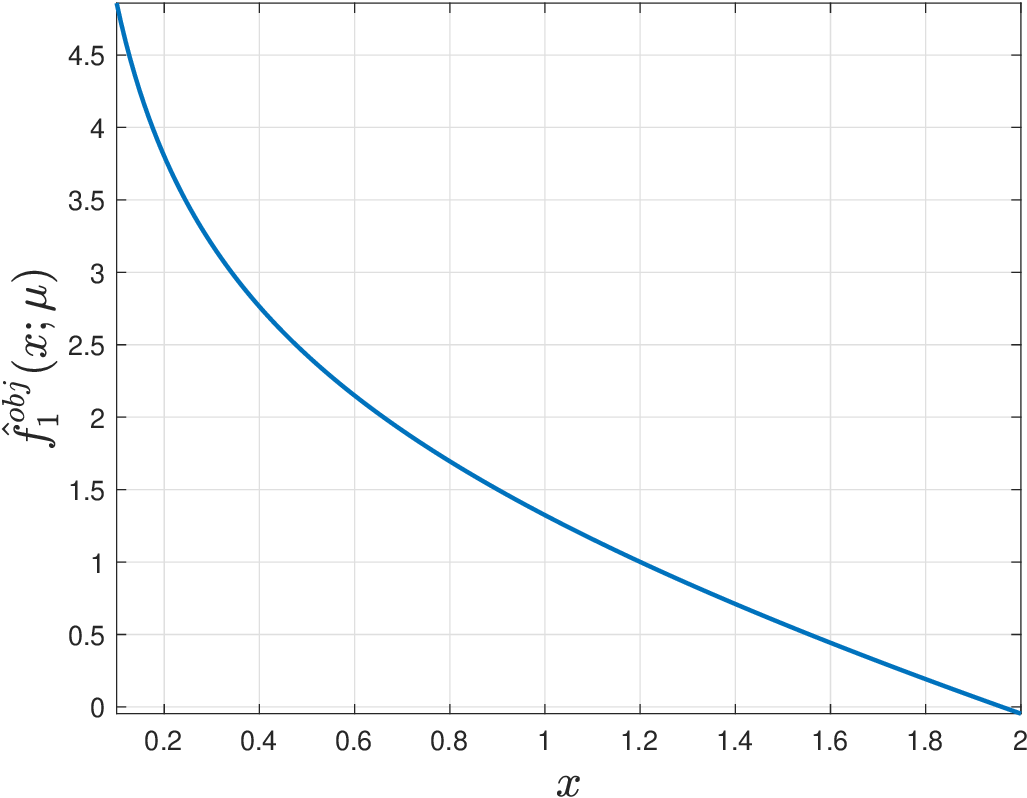}
\label{fig:smoothing_example}
\caption{Plot of $\hat{f}_1^{\sol}(x;\mu)$ (left) and plot of $\hat{f}_1^{\obj}(x;\mu)$ (right), $x\in[0.1,2]$, $\mu=1$.}
\end{figure}

In conclusion, in the context of two-stage optimization it is not preferred to use solution smoothing, compared to objective smoothing.
Nevertheless, most of the \SOUTr{global }convergence theory presented here applies to both approaches.

\section{Challenges of nonconvex subproblems}
\label{sec:behaviors_nonconvex}

The case where the subproblems~\eqref{eq:intro_subproblem} are convex is well-studied in the literature \cite{geoffrion1972generalized,rockafellar1991scenarios,ruszczynski1995convergence}. 
Convergence becomes more challenging  when the subproblems are nonconvex and only local minima of the subproblems are available.
These challenges majorly root from $\hat{f}_i(\cdot)$ being potentially a nonsmooth set-valued function; {see~\eqref{eq:intro_subproblem_local}}.
In this section, we illuminate the discussion by two specific examples showing those pathological structures. 

\subsection{Original formulation without smoothing}
We start by considering the original formulation~\eqref{eq:intro_master} and~\eqref{eq:intro_subproblem} without the smoothing introduced in Section~\ref{sec:smoothing}.

\SOUT{Let us consider the following example.}
\begin{example}\label{ex:bilinear}
\begin{equation} \label{eq:linear_example}
\begin{aligned}
    \min_{x\in\mathbb{R}} \quad & \hat{f}_1(x) & \qquad \hat{f}_1(x)=\min_{y\in\mathbb{R}} \quad & y\\
        {\rm s.t.} \quad & 0\leq x \leq 2 & {\rm s.t.} \quad & (y+1+2x)(y+x)\geq 0, \quad
         y\geq -2-x.
\end{aligned}
\end{equation}
\end{example}
%
By algebra, the feasible region of the subproblem is
\begin{equation}
\begin{cases}
    [-2-x,-1-2x]\cup [-x,\infty], & \text{ if }0\leq x\leq 1, \\
    [-x,\infty], & \text{ if }1<x\leq 2.
\end{cases}
\end{equation}
On the left panel of Figure~\ref{fig:feasible_linear}, we plot the feasible region for different values of $x$.
It can be readily seen that the subproblem is nonconvex for $0\leq x\leq 1$, since the feasible region consists of two disjoint intervals until the one on the left turns into a singleton at $x=1$.
Therefore, the subproblem has two local minimizers which are the left-endpoints of each interval, namely $-2-x$ and $-x$.
We then denote the two trajectories of local minimizers as functions of $x$:
\begin{equation}\label{eq:solpath_ex2}
    y_1^*(x)=-2-x,\quad y_2^*(x)=-x.
\end{equation}
When $1<x\leq 2$, one of the feasible interval vanishes and the feasible region is a connected interval.
As a result, $y_2^*(x)$ is the only local minimizer when $x\in(1,2]$.
On the right panel of Figure~\ref{fig:feasible_linear}, we plot both $y_1^*(x)$ and $y_2^*(x)$ which are the same as the local evaluation of $\hat{f}_1(x)$, and it can be readily seen that global optimal solution is $(x^*,y^*)=(1,-3)$.

\begin{figure}[htpt!]
    \centering
    \includegraphics[width = 0.48\textwidth]{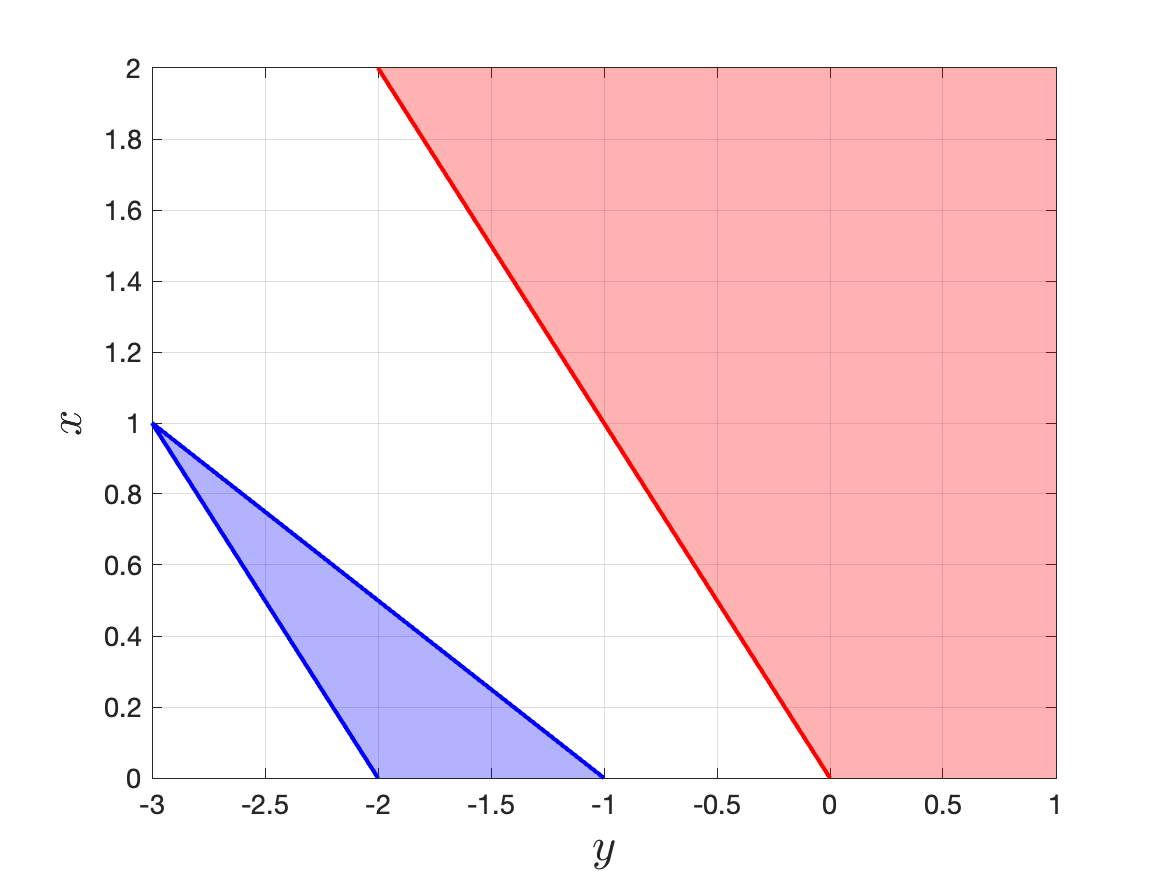}
    \includegraphics[width = 0.43\textwidth]{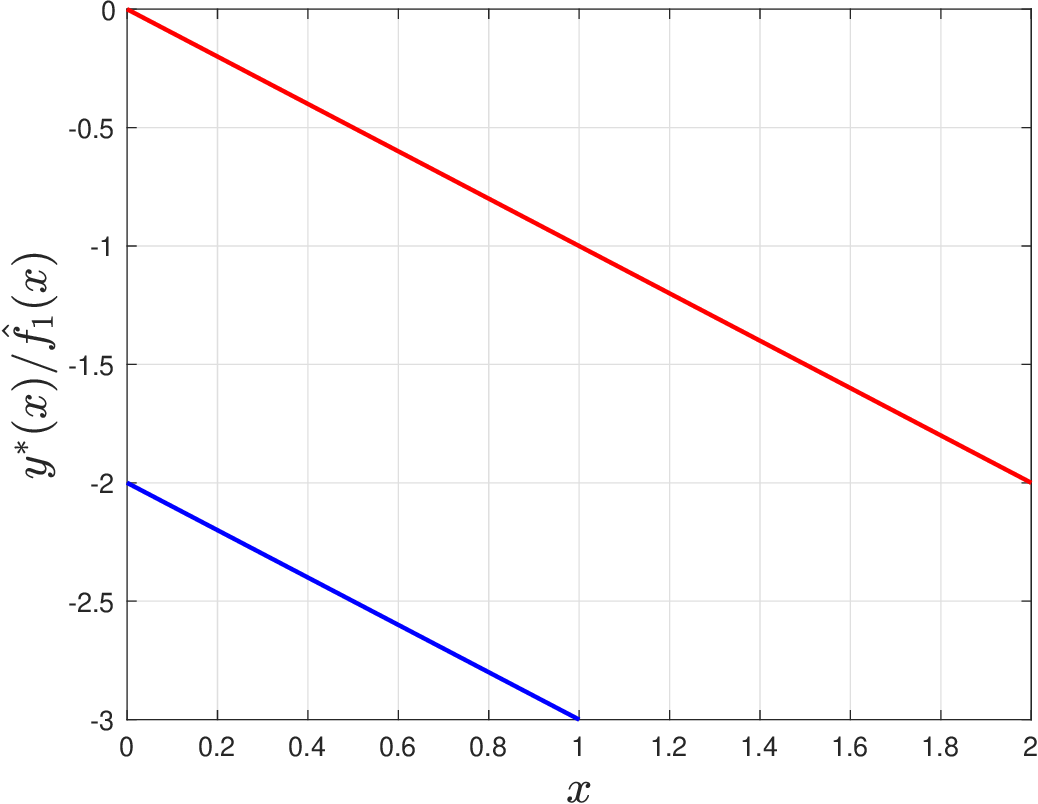}
    \caption{Left: Feasible region of the subproblem in Example~\ref{ex:bilinear} for different values of $x$.
    Right: Solution maps and $\hat{f}_1(x)$ Example~\ref{ex:bilinear}.}
    \label{fig:feasible_linear}
\end{figure}

In this paper, we call a parametric solution of the subproblem,  such as $y_1^*(x)$ and $y_2^*(x)$ above, a 
\textit{solution map}. 
To be concrete, we first define the set
\begin{equation}
    \label{aeq:def_Mi}
    \MMM_i = \{(x,y_i)\in\RR^{n_0}\times\RR^{n_i} : y_i \text{ is a local minimizer of \eqref{eq:intro_subproblem} for } x \}.
\end{equation}
Then we call $y^*_i(\cdot):U\to \mathbb{R}^{n_i}$ a solution map for the $i$-th subproblem if it is a continuous mapping from a neighborhood $U\subset\mathbb{R}^{n_0}$ with $(x,y^*_i(x))\in \MMM_i$ for all $x\in U$.

Figure~\ref{fig:feasible_linear} shows that multiple solution maps $y^*_i(x)$ may exist at a single $x$.
This indicates that $\hat{f}_i(x)$ may not be a well-defined function, but a set-valued mapping.
In addition, a  solution map might be defined only for  a subset of the feasible region; for example, $y_1^*(x)$ in \eqref{eq:solpath_ex2} vanishes at $x=1$.

If $\hat{f}_i(x)$ is computed as a local minimum of the subproblem, whenever a subproblem solver is called, its evaluation may correspond to different solution maps.
{
The uncontrollable switching among solution maps results in a discontinuous appearance of $\hat f_i(x)$, which can cause convergence issues for the master problem solver.}
%
In Section~\ref{sec:global_conv_fixed_mu} we show that a warm start strategy in the subproblem solver can overcome this challenge.


We note that similar concepts of solution maps have been explored in the contexts of parametric optimization \cite{guddat1990parametric} and time-varying optimization \cite{fattahi2020absence}, although merely in a single-parametric fashion i.e., $x\in\mathbb{R}$.
\cite{schecter1986structure} studied\SOUT{ the topological structure of} parametric optimization in a multi-dimensional setting, but without any algorithmic design.

\SOUT{The previous example concerns the changes of feasible region as the master variable $x$ changes.}
Next, we introduce another example where the curvature of the subproblem changes the sign when $x$ is varied.
\begin{example}\label{ex:curvature}
    \begin{equation} \label{eq:QP_example}
\begin{aligned}
    \min_{x\in\mathbb{R}} \quad & \hat{f}_1(x)  & \quad \hat{f}_1(x)=\min_{y\in\mathbb{R}} \quad & xy^2 \\
        {\rm s.t.} \quad & -1\leq x \leq 1 & {\rm s.t.} \quad & -1\leq y \leq 2.
\end{aligned}
\end{equation}
\end{example}


An important feature of this example is that the subproblem can be either convex or concave depending on the sign of $x$.
When $0<x\leq 1$, the subproblem is strictly convex with a global minimizer $y=0$.
When $-1\leq x<0$, it is concave with two local minimizers on the boundary: $y=-1$ and $y=2$.
If $x=0$, then any point in the feasible region $[-1,2]$ is globally optimal.
The solution maps are plotted on the left panel of Figure~\ref{fig:QP_solution_map_unsmooth}, and $\hat{f}_1(x)$ is on the right.
In addition to the local minimizers in red and blue, we also plot a stationary point (in this example, the global maximizer) in green.
We observe that as $x$ decreases, the solution maps extend to multiple branches when the convexity switches to concavity at $x=0$.
This is called ``bifurcation'' in the context of time-varying optimization \cite{fattahi2020absence}.
Such a structure can again prevent convergence, as it may confuse an algorithm about which solution map to follow.


\begin{figure}[htpt!]
    \centering
    \includegraphics[width = 0.43\textwidth]{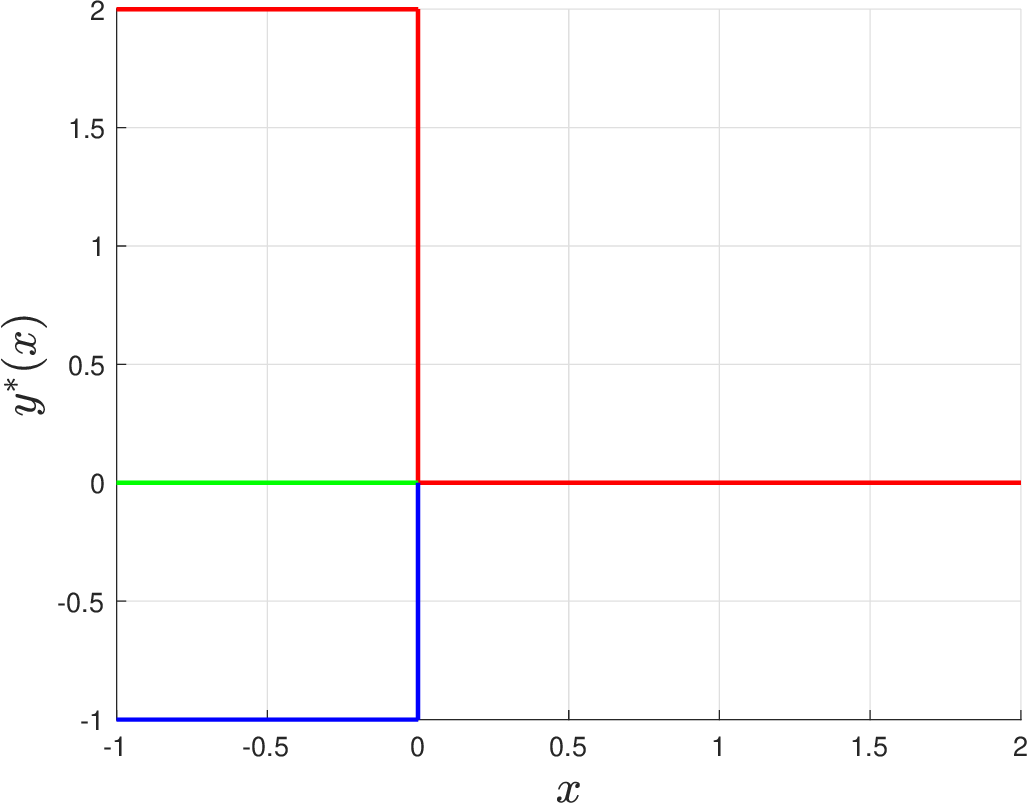}
    \includegraphics[width = 0.43\textwidth]{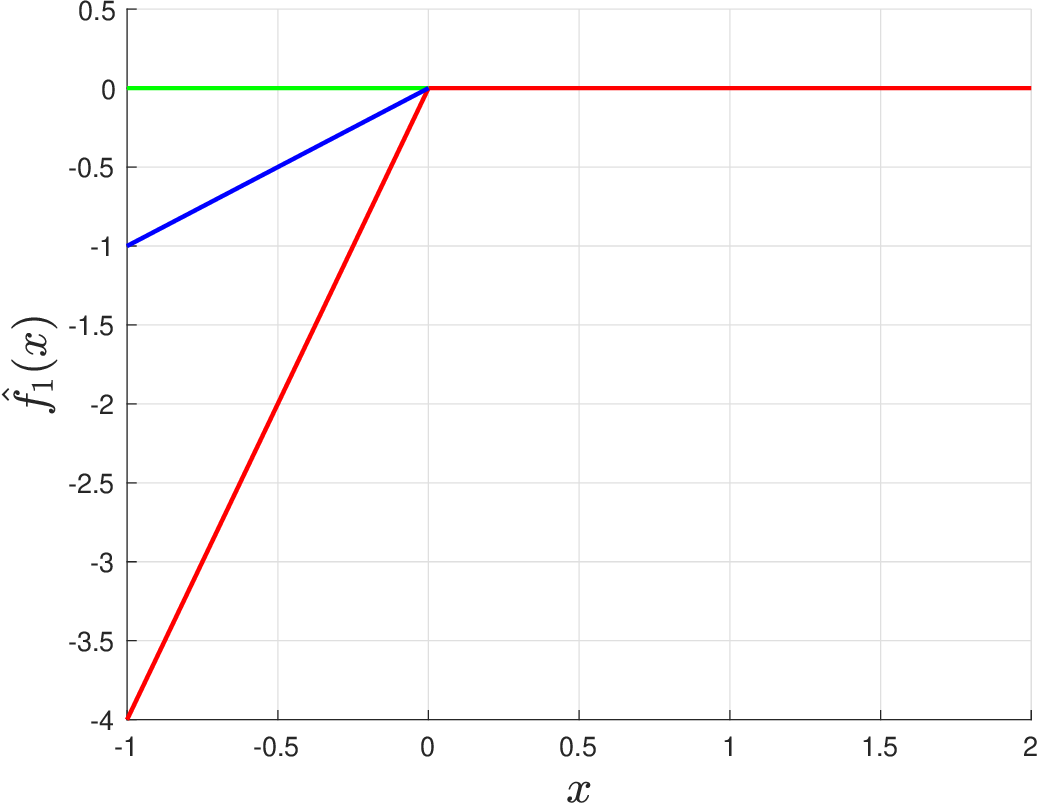}
    \caption{Plot of solution maps (left) and $\hat{f}_1(x)$ (right) for Example~\ref{ex:curvature}.}
    \label{fig:QP_solution_map_unsmooth}
\end{figure}

\subsection{Nonconvex subproblem with smoothing}
Our algorithm implements the smoothing technique detailed in Section~\ref{sec:smoothing}, and it is natural to ask if the issues described in the previous section persist when the $\log$-barrier is introduced. 

\paragraph{Example~\ref{ex:bilinear}}
With the log-barrier smoothing, the subproblem becomes
\begin{equation}
\begin{split}
\hat{f}_1(x,\mu)=\min_{y\in\mathbb{R}} \quad & y-\mu\sum_{j=1}^2\log(s_j) \\
    {\rm s.t.} \quad & (y+1+2x)(y+x)=s_1, \quad y+2+x=s_2.
\end{split}
\end{equation}
We plot the smoothed solution maps and master objective function for $\mu=1,0.5,0.1$ in Figure~\ref{fig:linear_solution_maps_smooth}.
An important observation is that the $\log$-barrier increasingly penalizes the subproblem objective function as $x$ approaches 1 from below, since the feasible interval $[-2-x,-1-2x]$ becomes increasingly narrow, and $s_1$ is forced to become arbitrarily small.
As $x\to 1$, the slack variable $s_1$ disappears, and\SOUT{ the blue solution map is open} at $x=1$ one can observe the blowing-up of $\hat{f}_1(x,\mu)$ on the right panel.

\begin{figure}[htpt!]
    \centering
    \includegraphics[width = 0.43\textwidth]{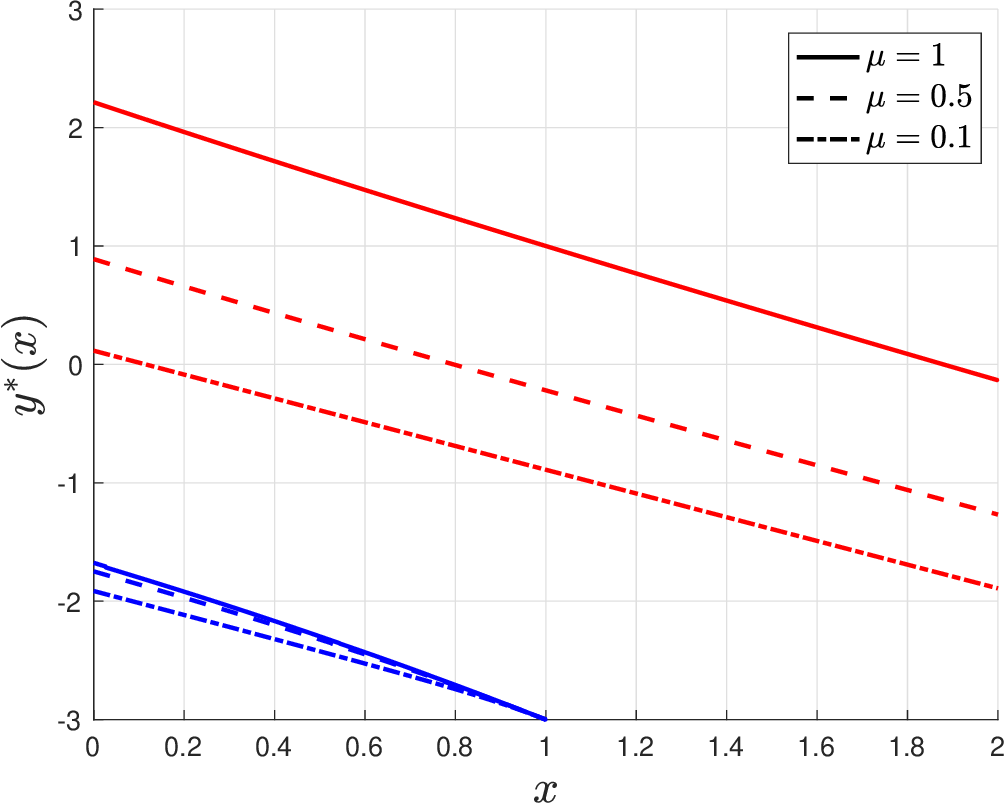}
    \includegraphics[width = 0.43\textwidth]{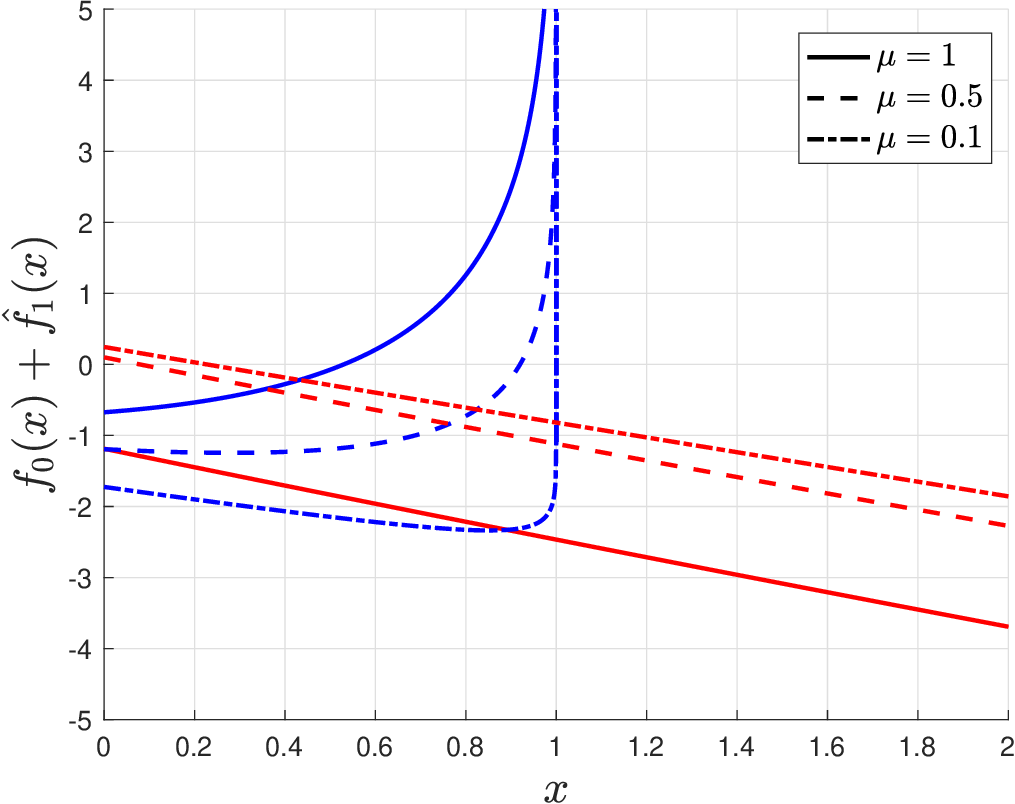}
    \caption{Solution maps (left) and smoothed master problem objective (left) for Example~\ref{ex:bilinear}.}
    \label{fig:linear_solution_maps_smooth}
\end{figure}

One advantage of smoothing is that although multiple solution maps still exist, each of them generates a smooth $\hat{f}_1(x,\mu)$.
Also note that in this example, the point $x=1$, at which the blue feasible region (see the left panel of Figure~\ref{fig:feasible_linear}) disappears, is never approached by the master solver.
This is because $\lim_{x\to1^-}\hat{f}_1(x,\mu)=+\infty$.
As $\mu\to0$, the solver can still recover the true solution; see the right panel of Figure~\ref{fig:linear_solution_maps_smooth}.

\paragraph{Example~\ref{ex:curvature}}
After smoothing, the subproblem of Example~\ref{ex:curvature} becomes
\begin{equation}
\begin{split}
    \hat{f}_1(x,\mu)=\min_{x\in\mathbb{R}} \quad & xy^2-\mu\sum_{j=1}^2\log(s_j) \\
    {\rm s.t.}  \quad & -1+s_1=y=2-s_2.
\end{split}
\end{equation}
Figure~\ref{fig:QP_solution_maps_smooth} plots solution maps and $\hat{f}_1(x,\mu)$ with $\mu=0.5,0.05,0.005$.
It can be seen that the plots recover the pattern of Figure~\ref{fig:QP_solution_map_unsmooth} as $\mu\to0$.
The curves in red are smooth for $\mu>0$, while the blue and green trajectories still intersect, {which represents a type of inefficiency to be addressed in Section~\ref{sec:nondegenerate_justification}.}

\begin{figure}[htpt!]
    \centering
    \includegraphics[width = 0.43\textwidth]{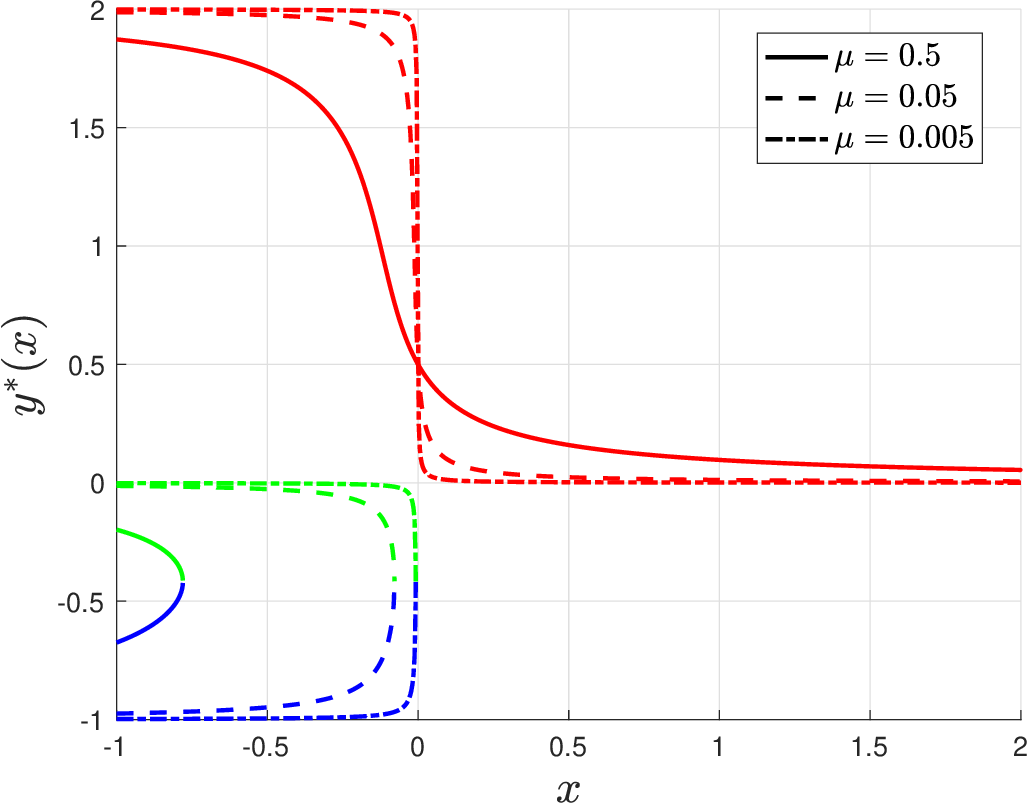}
    \includegraphics[width = 0.43\textwidth]{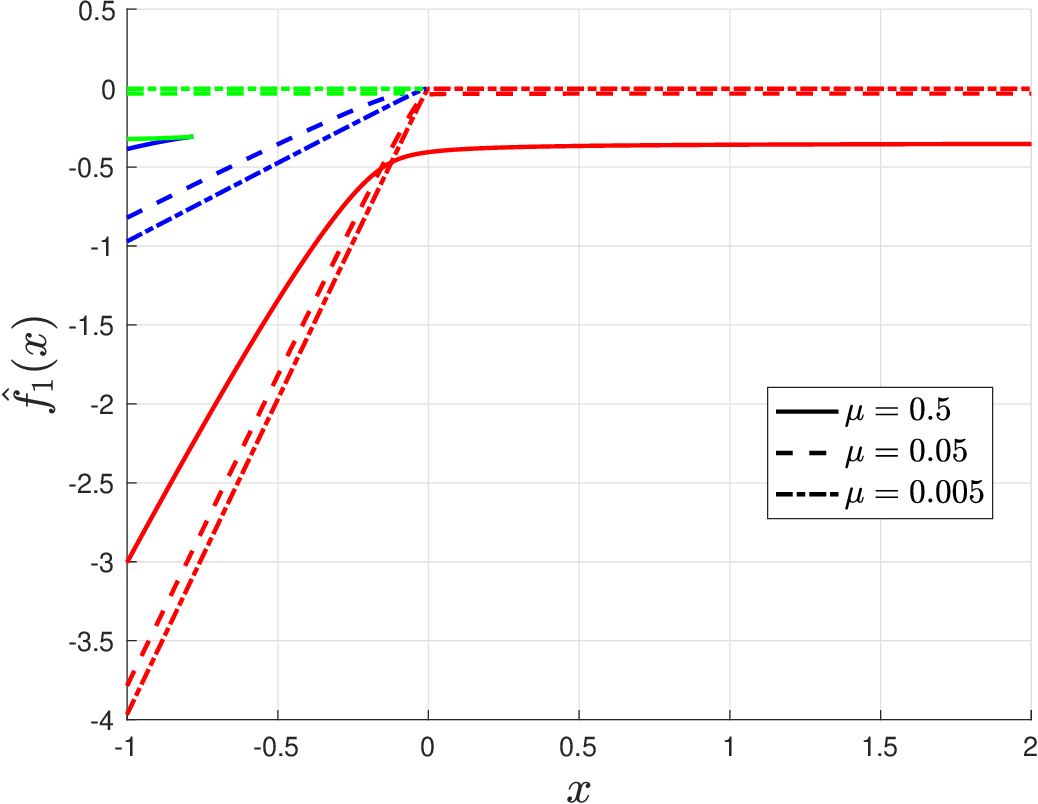}
    \caption{Solution maps (left) and smoothed master/subproblem problem objective (right) for Example~\ref{ex:curvature}.}
    \label{fig:QP_solution_maps_smooth}
\end{figure}

We finally note that with smoothing, subproblem~\eqref{eq:obj_smoothing_proj} satisfies LICQ and hence the KKT conditions hold at its local minima.
It is therefore of interests to study the trajectories of KKT points.
The notion of ``solution map", defined according to local minima in the above subsection, can be readily extended to KKT points.
Let us consider the notation $v_i^*$ defined in~\eqref{eq:v_i_definition}, and define the set
\begin{equation}
    \label{aeq:def_Si}
    \SSS_i = \{(x,v_i)\in\RR^{n_0}\times\RR^{n_{v_i}} :F_i(v_i;x,\mu)=0\}.
\end{equation}
Then a solution map with respect to KKT points of the barrier problem is a continuous map $v^*_i(\cdot;\mu):U\to \mathbb{R}^{n_{v_i}}$, if $(x,v^*_i(x;\mu))\in\SSS_i$ for all $x\in U$.

\section{Decomposition algorithm}
\label{sec:algorithm}

For simplicity of notation, we write only the $i$-th subproblem $\hat f_i(x)$ in the remaining of the paper.
Unless emphasized, the operations for $i$-th subproblem introduced are with respect to all subproblems for $i=1,\cdots,N$.

\subsection{Algorithm for the master problem}
\label{sec:master_algo}

We denote the master variables as $u$, which contains the primal master variables $x$, and possibly other quantities from the first stage (e.g., dual variables) depending on the algorithm choice.
For example, if an SQP method is applied to solving~\eqref{eq:intro_master}, where the constraint $c_0(x)\leq 0$ is associated with the dual variable $\lambda_0$ and the slack $s_0$, then $u=(x,\lambda_0,s_0)$.

We also assume in this paper the feasibility of subproblems parameterized by any feasible master variables.
This assumption is formally stated as Assumption~\ref{assump:full_recourse} in Section~\ref{sec:global_convergence}.
In a case where this assumption is violated, i.e., a subproblem is infeasible for some $x$, one can introduce slack variables to the constraints parameterized by $x$, and penalize the subproblem objective by the slacks.
With this, the subproblems are always feasible.
For more details, see, e.g., \cite[Chapter 4]{luo2023efficient}.

The general framework of our two-stage decomposition algorithm is summarized in Algorithm~\ref{alg:decomposition_algo_general}.
Importantly, we note that in Step~\ref{line:master_solver} whenever the master problem solver requires computing $\hat{f}_i(x)$ and its derivatives at some $x$, our algorithm calls a subproblem solver to solve~\eqref{eq:obj_smoothing} parameterized by $x$.

As illustrated in Section~\ref{sec:smoothing}, the algorithm iteratively solves a smoothed master problem~\eqref{eq:smooth_master} parameterized by the smoothing parameter $\mu$.
For a large value of $\mu$, there is no need to solve the master problem to high accuracy.  Instead, the master problem solver tolerance is tightened as $\mu\to0$, as stated in Step~\ref{line:master_solver}.
Here, let $\theta_0(\cdot;\mu):\RR^{n_u}\to\RR_+$ be a continuous optimality measure that is zero if and only if $u$ corresponds to a stationary point of the smoothed master problem \eqref{eq:smooth_master}.   
If an SQP solver is used, it makes sense to define $\theta_0$ in terms of the violation of the KKT conditions for~\eqref{eq:smooth_master} via
$\theta_0(u;\mu)=\|F_0(u;\mu)\|$, where 
\begin{equation}\label{eq:smooth_master_optcond}
  F_0(u;\mu) =
  \begin{pmatrix}
   \na_x f_0(x)+\na_x \hat{f}_i(x;\mu)+\na_x c_0(x)\lambda_0\\
        c_0(x) + s_0 \\
        s_0 \circ \lambda_0
  \end{pmatrix}
\end{equation}
with $\lam_0\geq0$ and $s_0\geq 0$. 

\SOUT{
Once the master problem solver generates an approximate stationary point $u^l$ satisfying $\theta_0(u^l;\mu^l)\leq c_0\mu^l$, the algorithm decreases the value of $\mu$ and computes an initialization point for the next iteration.}


\begin{algorithm}[t]
\caption{Two-stage decomposition algorithm} \label{alg:decomposition_algo_general}
\begin{algorithmic}[1]
\Require{Initial iterate $\tilde u^0$; initial smoothing parameter $\mu^0$; termination factor $c_0>0$.} 

\State Set $l\gets0$.
\State Starting from $\tilde u^l$, call a master problem solver 
to solve~\eqref{eq:smooth_master} with $\mu=\mu^l$ to find $u^{l}$ so that $\theta_0(u^l;\mu^l)\leq c_0\mu^l.$ 
\label{line:master_solver}
\State Choose $\mu^{l+1}\in(0,\mu^{l})$ (so that $\mu^l\to0$) and set $l\gets l+1$.\label{line:decay_mu}
\State Choose starting iterate $\tilde u^{l}$ for the next iteration and go to Step~\ref{line:master_solver}. 
\label{line:initialization}
\end{algorithmic}
\end{algorithm}

The master solver in Step~\ref{line:master_solver} can in principle be any off-the-shelf nonlinear programming algorithm.
To ensure {convergence from arbirary initialization and a fast local rate}, it typically requires a second-order method with a line search or trust region.
The only difference from a regular single-stage method is that it requires to call a subproblem solver to evaluate the function value and derivatives.
In Section~\ref{sec:global_convergence} and~\ref{sec:numerical}, we will showcase the convergence and implementation of a trust-region S$\ell_1$QP method as the master problem solver. 

A simple setting of the initialization in Step~\ref{line:initialization} would be $\tilde u^{l}\gets u^l$.
In the absence of LICQ for all subproblems (SLICQ), it was observed numerically that fast local convergence might not be achieved with such initialization \cite{tu2021two}.
In Section~\ref{sec:extrapolation}, we illustrate how to achieve the superlinear local convergence without SLICQ by using an extrapolation step.  To achieve this rate, the smoothing parameter must be decreased in a superlinear fashion, similar to \cite{wachter2006implementation}.

\subsection{Interior-point method for the subproblems}
\label{sec:subproblem_algo}

A subproblem solver is called in the master problem solver\SOUT{ in order to compute the function value, gradient, and Hessian of the master problem}.
In this paper, we choose a Newton-type method to be the subproblem solver, and we will prove in Section~\ref{sec:global_conv_fixed_mu} that this enables {the \SOUTr{global }convergence for nonconvex two-stage problems}.
Due to the log-barrier smoothing, it is natural to utilize an interior-point method.
In our experiments in Section~\ref{sec:numerical}, we use {\tt Ipopt} \cite{wachter2006implementation}.

\begin{sloppypar}
We also implement a \textit{warm start mechanism} whenever there is a change of the master variables parameterizing the subproblems.
To be specific, suppose at a master iterate $x$ the subproblem solver returns a stationary point $v_i^*(x;\mu)$ such that $F_i(v_i^*(x;\mu);x,\mu)=0$.
Whenever the master solver evaluates a trial point $x+\Delta x$, the subproblem solver initializes itself from $v_i^*(x;\mu)$ instead of a random or fixed initialization.
We will show in Section~\ref{sec:global_conv_fixed_mu} that warm start is a crucial component to achieve {the convergence from any initialization.}
\end{sloppypar}

\section{Global convergence analysis}
\label{sec:global_convergence}

This section concerns the global convergence properties of the proposed decomposition method, when both the master and subproblems are in general nonconvex.
{By global convergence, we aim to establish convergence of the iterates to stationary points of~\eqref{eq:intro_master} and~\eqref{eq:smooth_master} with arbitrary initialization.}
We present in Section~\ref{sec:global_conv_fixed_mu} the results \SOUT{for the master problem solver }with a fixed value of $\mu^l$, and in Section~\ref{sec:global_conv_changing_mu} for $\mu^l\to0$.

To facilitate the analysis, we have the following assumption on feasibility throughout the paper.
\begin{assumption}
\label{assump:full_recourse}
    For any $x$ sent to the subproblems by the master problem~\eqref{eq:intro_master}, the subproblem~\eqref{eq:intro_subproblem} parameterized by $x$ is feasible and the subproblem solver always returns a KKT point.
\end{assumption}

\subsection{Convergence for fixed values of the smoothing parameter}
\label{sec:global_conv_fixed_mu}
We first study the case where $\mu^l$ is fixed.
In light of this, we drop the dependency of $\mu^l$ and the index $l$ in the functions and variables of this subsection.

In Section~\ref{sec:hat_f_is_C2} we will discuss how to remedy the pathological behaviors of $\hat f_i(\cdot)$ and attain differentiability locally, crucial for convergence.
Then, we showcase how these results permit a global convergence proof in a trust-region S$\ell_1$QP framework.

\subsubsection{\boldmath Differentiability of \texorpdfstring{$\hat f_i(\cdot)$}{hatfi}}
\label{sec:hat_f_is_C2}
Recall from Section~\ref{sec:behaviors_nonconvex} that we have defined the notion of ``solution map" as the trajectory of KKT points/local minimizers for~\eqref{eq:obj_smoothing}.
In light of the necessary optimality conditions and the algorithm implemented, we consider in Section~\ref{sec:global_convergence} the solution maps with respect to KKT points.


First, we present a standard assumption on differentiability.
Depending on the smoothing technique (see \eqref{eq:obj_smoothing} and \eqref{eq:sol_smoothing}), we require different assumptions.
\begin{assumption}
\begin{sloppypar}
\label{assump:lipschitz}
    $f_0(\cdot)$ and $c_0(\cdot)$ are $C^1$ with locally Lipschitz continuous first derivatives.
    {If objective smoothing \eqref{eq:obj_smoothing} is implemented, $f_i(\cdot;\cdot)$ and $c_i(\cdot;\cdot)$ are $C^2$ with locally Lipschitz continuous second derivatives;
    if solution smoothing \eqref{eq:sol_smoothing} is implemented, $f_i(\cdot;\cdot)$ and $c_i(\cdot;\cdot)$ are $C^3$.}
\end{sloppypar}
\end{assumption}


The following definition helps us to refer to points at which the subproblem has a unique local stationary point for a fixed $x$; recall that $\SSS_i$ is defined in~\eqref{aeq:def_Si}.
\begin{sloppypar}
\begin{definition}
    A point $(x,v_i)\in \SSS_i$ is a nondegenerate stationary point, if $\nabla_{v_i} F_i(v_i;x)$ is nonsingular.  If $\nabla_{v_i} F_i(v_i;x)$ is singular, then $(x,v_i)\in \SSS_i$ is a degenerate stationary point.
\end{definition}
\end{sloppypar}



Next, we introduce the notion of \textit{``reference point"}, which helps to distinguish multiple solution maps locally.
Specifically, we call $v_i^*(\cdot;(\bar x, \bar v_i))$ a solution map with a reference point $(\bar x,\bar v_i)\in \mathbb{R}^{n_0+n_{v_i}}$ if $v_i^*(\bar x;(\bar x, \bar v_i))=\bar v_i$.
%
The following lemma considers a nondegenerate point $(\bar x,\bar v_i)\in\SSS_i$ of the subproblem as the reference point.
It shows there exists a neighborhood around $\bar x$ such that there is a unique smooth solution map, and the graph of the solution map is also unique in a neighborhood of $(\bar x, \bar v_i)$.

\begin{lemma}
\label{lem:unif_bound_IFT}
Suppose Assumption~\ref{assump:full_recourse} and~\ref{assump:lipschitz} hold.
Let $\bar x\in\RR^{n_0}$ and $\bar v_i\in\mathbb{R}^{n_{v_i}}$ such that $(\bar x,\bar v_i)\in \SSS_i$ is nondegenerate.  
Then there exists $r_1>0$ such that there is a unique $C^1$ solution map $v_i^*(\cdot;(\bar x, \bar v_i)):B(\bar x,r_1)\to\mathbb{R}^{n_{v_i}}$ on $B(\bar x,r_1)$ with the reference point $(\bar x,\bar v_i)$.

Further, there exists $r_2>0$ such that the graph of $v_i^*(\cdot;(\bar x, \bar v_i))$ is the unique smooth submanifold in $B((\bar x,\bar v_i),r_2)$ that contains $(\bar x, \bar v_i)$ and solves $F_i(\cdot;\cdot)=0$.


\end{lemma}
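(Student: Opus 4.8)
The plan is to apply the implicit function theorem to $F_i(\cdot;\cdot) = 0$ at the reference point $(\bar x, \bar v_i)$, treating $v_i$ as the dependent variable and $x$ as the parameter. By Assumption~\ref{assump:lipschitz}, $F_i$ is $C^1$ (the components of $F_i$ involve first derivatives of $f_i, c_i$, which are $C^2$ under objective smoothing, so $F_i \in C^1$; under solution smoothing $v_i^*$ does not enter $F_i$ differently, but the relevant map is still the KKT system of \eqref{eq:obj_smoothing_proj}, so the same reasoning applies with the stated smoothness). Since $(\bar x, \bar v_i) \in \SSS_i$ is nondegenerate, $\na_{v_i} F_i(\bar v_i; \bar x)$ is nonsingular by definition. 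The IFT then yields radii $r_1, \tilde r > 0$ and a unique $C^1$ map $v_i^*(\cdot;(\bar x,\bar v_i)) : B(\bar x, r_1) \to B(\bar v_i, \tilde r)$ with $v_i^*(\bar x;(\bar x,\bar v_i)) = \bar v_i$ and $F_i(v_i^*(x;(\bar x,\bar v_i)); x) = 0$ for all $x \in B(\bar x, r_1)$. Shrinking $r_1$ if necessary so that the image lies in $B(\bar v_i, \tilde r)$, this map is a solution map with reference point $(\bar x, \bar v_i)$ in the sense defined just before the lemma. Uniqueness of the $C^1$ solution map on $B(\bar x, r_1)$ among maps valued in $B(\bar v_i,\tilde r)$ is part of the IFT conclusion; uniqueness among \emph{all} continuous solution maps on a (possibly smaller) ball follows because continuity forces the graph to stay in $B(\bar v_i,\tilde r)$ near $\bar x$, then the local IFT uniqueness applies.

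For the second statement, I would argue that the graph $G := \{(x, v_i^*(x;(\bar x,\bar v_i))) : x \in B(\bar x, r_1)\}$ is a $C^1$ (indeed smooth, under the extra smoothness in Assumption~\ref{assump:lipschitz}) submanifold of dimension $n_0$, being the graph of a $C^1$ function; it contains $(\bar x, \bar v_i)$ and solves $F_i = 0$. For uniqueness, suppose $M$ is any smooth submanifold in $B((\bar x, \bar v_i), r_2)$ containing $(\bar x, \bar v_i)$ with $F_i \equiv 0$ on $M$. The zero set $\{F_i = 0\}$ near $(\bar x,\bar v_i)$ is, by the IFT (equivalently, by the regular value / submersion structure coming from nonsingularity of $\na_{v_i}F_i$), exactly the graph $G$ restricted to a small neighborhood — so any subset of the zero set, in particular $M$, is contained in $G$. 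If $M$ is additionally required to be a submanifold of the same dimension $n_0$ (or: a maximal connected submanifold through $(\bar x,\bar v_i)$ inside the zero set), then $M$ is relatively open in $G$ near $(\bar x,\bar v_i)$, and choosing $r_2$ small enough that $B((\bar x,\bar v_i),r_2) \cap \{F_i = 0\} \subset G$ forces $M \cap B((\bar x,\bar v_i),r_2)$ to coincide with $G \cap B((\bar x,\bar v_i),r_2)$.

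The main technical point to pin down carefully is the precise sense in which the graph is ``the unique smooth submanifold'' — without a dimension or maximality qualifier, lower-dimensional submanifolds (e.g. a single point, or a curve) sitting inside $G$ would also satisfy the literal statement, so the intended reading must be that $G$ is the unique submanifold of full dimension $n_0$ through $(\bar x, \bar v_i)$, or equivalently that the \emph{germ} of the zero set at $(\bar x, \bar v_i)$ is a manifold germ equal to that of $G$. I would therefore phrase the proof around the IFT's description of the entire local zero set $\{F_i = 0\} \cap B((\bar x,\bar v_i), r_2) = G \cap B((\bar x,\bar v_i), r_2)$, from which both the existence of the submanifold structure and the uniqueness follow immediately. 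The remaining steps — verifying $F_i \in C^1$ from Assumption~\ref{assump:lipschitz}, and choosing $r_1$ small enough that the solution map's range is captured by the IFT neighborhood — are routine.

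A subtle bookkeeping issue worth a sentence is that $F_i$ in \eqref{eq:smooth_subprob_optcond} carries the trivial block $\tilde x_i - x$, so $\na_{v_i} F_i$ includes the identity block coupling $\tilde x_i$ to its own equation; nonsingularity of the full $\na_{v_i} F_i$ is still exactly the nondegeneracy hypothesis, and nothing changes, but I would note that one may equivalently eliminate $\tilde x_i = x$ first and apply the IFT to the reduced system, which is cleaner and makes the sensitivity formulas \eqref{eq:na_hat_f}--\eqref{eq:na_vstar} transparent.
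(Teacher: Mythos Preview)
Your proposal is correct and follows essentially the same route as the paper: apply the implicit function theorem to $F_i(v_i;x)=0$ using nonsingularity of $\nabla_{v_i}F_i(\bar v_i;\bar x)$ for the first statement, and then observe that the graph of the resulting $C^1$ map is a smooth submanifold that locally coincides with the entire zero set of $F_i$, which gives the uniqueness in the second statement. Your discussion is in fact more careful than the paper's on two points the paper leaves implicit --- the intended sense of ``unique smooth submanifold'' (full-dimensional/germ of the zero set) and the trivial $\tilde x_i - x$ block --- but these are refinements of, not departures from, the paper's argument.
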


    

\begin{proof}
Since $F_i(\bar v_i;\bar x) =0$ and the Jacobian $\nabla_v {F_i}(\bar v_i;\bar x)$ is nonsingular, the first statement of the lemma follows directly from the implicit function theorem.
%

The second statement extends the uniqueness result to the $(x,v_i)$-space.
The inverse function theorem implies that the graph of $v_i^*(\cdot;(\bar x, \bar v))$ is a smooth embedded submanifold in $\mathbb{R}^{n_0+n_{v_i}}$, a Cartesian product between the $x$- and $v_i$-space. 
Then, the uniqueness of $v_i^*(\cdot;(\bar x, \bar v))$ and the nonsingularity of $\nabla_v {F_i}(\bar v_i;\bar x)$ guarantee that:
there exists $r_2>0$ and $r_2\leq r_1$ such that in $B((\bar x,\bar v_i),r_2)$, the graph of $v_i^*(\cdot;(\bar x, \bar v))$ is the unique smooth submanifold which is the zero level set of $F_i(\cdot;\cdot)$ containing $(\bar x,\bar v_i)$.
\end{proof}

The existence of $v^*_i(\cdot;(\bar x, \bar v_i))$ in Lemma~\ref{lem:unif_bound_IFT} enables us to study the local properties of solution maps, when a nondegenerate reference point is given.

Due to the existence of multiple solution maps when subproblems are nonconvex,
the function values $\hat f_i(x^k)$ that the first-stage algorithm ``sees'' might \SOUT{be somewhat erratic and }not correspond to a continuous function throughout the iterations.  
To make sure that the convergence properties of standard nonlinear optimization methods still hold, we make the following assumption on the subproblem solver.





\begin{assumption}
\label{ass:consistent_output}
    If $\bar x\in\RR^{{n_0}}$ and $\bar v_i\in\mathbb{R}^{n_{v_i}}$ such that $(\bar x,\bar v_i)\in \SSS_i$ is nondegenerate, then there exists $r_3>0$ such that the subproblem solver satisfies the following property:  

    Suppose that $v_i^{k}$ is the stationary point computed by the subproblem solver for a master problem iterate $x^{k}$ (i.e., $(x^{k},v_i^{k})\in \SSS_i$) so that $(x^{k},v_i^{k})\in B((\bar x,\bar v_i),r_3)$.  
    Then, when the subproblem solver is started from a starting point $v_i^{k}$ for an input $x^+ = x^{k}+p$ with $\|p\|\leq r_3$, the stationary point $v_i^+$ computed by the subproblem solver satisfies $(x^+,v_i^+)\in B((\bar x,\bar v_i),{r_2})${, where $r_2$ is defined in Lemma~\ref{lem:unif_bound_IFT}.}
    
\end{assumption}

This assumption essentially states that, once the input/output pair for the subproblem solver is sufficiently close to a nondegenerate stationary point of the subproblem and the master solver makes a sufficiently small update on the $x$-space, then the output returned by the subproblem solver is guaranteed to be close to the nondegenerate stationary point as well.
Lemma~\ref{lem:newton_region_of_attraction} below shows that a Newton-based interior-point solver for \eqref{eq:obj_smoothing} naturally satisfies this assumption.  This assumption is also always satisfied when there is a unique continuous solution map, e.g., when the subproblem is strictly convex. 


For a nondegenerate reference point $(\bar x,\bar v_i)\in\SSS_i$ and the corresponding solution map $v^*_i(\cdot;(\bar x,\bar v_i))$ given by Lemma~\ref{lem:unif_bound_IFT},  we define the local smoothed second-stage functions according to either solution or objective smoothing (similar to Section~\ref{sec:smoothing}):
\begin{equation}\label{eq:def_fhats}
\begin{aligned}
    \hat f_i^{\rm sol}(x;(\bar x,\bar v_i)) & = f_i(y^*_i(x;(\bar x,\bar v_i));x) \\
    \hat f_i^{\rm obj}(x;(\bar x,\bar v_i)) & = \hat f_i^{\rm sol}(x;(\bar x,\bar v_i))  - \mu\sum_j\ln(s^*_{ij}(x;(\bar x,\bar v_i)))
\end{aligned}
\end{equation}
\begin{sloppypar}
Note that these functions are well-defined in the neighborhood prescribed in Lemma~\ref{lem:unif_bound_IFT}.
Depending on the choice of the smoothing, we let $\hat f_i(x;(\bar x,\bar v_i)) = \hat f_i^{\rm obj}(x;(\bar x,\bar v_i))$ or $\hat f_i(x;(\bar x,\bar v_i)) =\hat f_i^{\rm sol}(x;(\bar x,\bar v_i))$.
\end{sloppypar}
\begin{proposition}
\label{prop:f_is_C2_single}
    Suppose Assumption~\ref{assump:full_recourse}, \ref{assump:lipschitz}, and \ref{ass:consistent_output} hold and $(\bar x,\bar v_i)\in \SSS_i$ is nondegenerate.
    Then, there exists $r>0$ such that for $(x,v_i)\in B((\bar x,\bar v_i),r)\cap \SSS_i$, the following hold:
    \begin{enumerate}[(i)]
        \item $(x,v_i)$ is on the graph of a unique $C^1$ solution map $v_i^*(\cdot;(\bar x,\bar v_i))$.
       
        \item 
        Let $x^+=x+p$ with $\|p\|\leq r$.
        Further let $\hat f_i(x)$ and $\hat f_i(x^+)$ be the values returned successively by the subproblem solver evaluated at $x$ and $x^+$.
        If $\hat f_i(x)=\hat f_i(x;(\bar x,\bar v_i))$, then $\hat f_i(x^+)=\hat f_i(x^+;(\bar x,\bar v_i))$.
        
         \item The function $\hat f_i(\cdot;(\bar x,\bar v_i))$ restricted to ${B(\bar x,r)}$ is $C^2$.

        
    \end{enumerate}
\end{proposition}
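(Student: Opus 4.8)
The plan is to combine Lemma~\ref{lem:unif_bound_IFT}, which gives a unique $C^1$ solution map near a nondegenerate reference point, with Assumption~\ref{ass:consistent_output}, which controls where the subproblem solver lands, and then feed everything through the implicit function theorem and the chain rule. The key is to choose a single radius $r>0$ small enough that all three conclusions hold simultaneously, so the main bookkeeping is an intersection of finitely many balls.

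First I would fix the radii from the earlier results: let $r_1,r_2$ be as in Lemma~\ref{lem:unif_bound_IFT} (so on $B(\bar x,r_1)$ there is a unique $C^1$ solution map $v_i^*(\cdot;(\bar x,\bar v_i))$, and its graph is the unique zero-manifold of $F_i$ inside $B((\bar x,\bar v_i),r_2)$, with $r_2\le r_1$), and let $r_3$ be the radius from Assumption~\ref{ass:consistent_output}. Then set $r \le \min\{r_1,r_2,r_3\}$, shrinking further as needed below. Statement (i) is then immediate: if $(x,v_i)\in B((\bar x,\bar v_i),r)\cap\SSS_i$, then $(x,v_i)$ lies in $B((\bar x,\bar v_i),r_2)$ and solves $F_i(\cdot;\cdot)=0$, so by the second part of Lemma~\ref{lem:unif_bound_IFT} it lies on the graph of $v_i^*(\cdot;(\bar x,\bar v_i))$; in particular $v_i = v_i^*(x;(\bar x,\bar v_i))$. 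For statement (ii): given $x^+=x+p$ with $\|p\|\le r\le r_3$ and $(x,v_i)\in B((\bar x,\bar v_i),r)\subseteq B((\bar x,\bar v_i),r_3)$, the hypothesis $\hat f_i(x)=\hat f_i(x;(\bar x,\bar v_i))$ means the solver's output at $x$ is $v_i = v_i^*(x;(\bar x,\bar v_i))$ (using the definition~\eqref{eq:def_fhats} and that this value, through $y_i^*$ or $s_i^*$, determines $\hat f_i$). Apply Assumption~\ref{ass:consistent_output} with this input/output pair: the solver started from $v_i$ on input $x^+$ returns $v_i^+$ with $(x^+,v_i^+)\in B((\bar x,\bar v_i),r_2)$. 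Since $(x^+,v_i^+)\in\SSS_i$ and lies in $B((\bar x,\bar v_i),r_2)$, Lemma~\ref{lem:unif_bound_IFT} forces $v_i^+ = v_i^*(x^+;(\bar x,\bar v_i))$, hence $\hat f_i(x^+)=\hat f_i(x^+;(\bar x,\bar v_i))$ by~\eqref{eq:def_fhats}. One subtlety to check here is that $x^+\in B(\bar x,r_1)$ so that $v_i^*(x^+;\cdot)$ is even defined; this holds because $\|x^+-\bar x\|\le \|x-\bar x\|+\|p\| < r + r \le r_1$ after taking $r\le r_1/2$ in the initial choice.

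For statement (iii): on $B(\bar x,r)$ the solution map $v_i^*(\cdot;(\bar x,\bar v_i))$ is $C^1$ by Lemma~\ref{lem:unif_bound_IFT}, but for $\hat f_i(\cdot;(\bar x,\bar v_i))$ to be $C^2$ we need the extracted components $y_i^*$ and $s_i^*$ to be $C^1$ with the requisite smoothness of $F_i$. Under Assumption~\ref{assump:lipschitz}, for objective smoothing $f_i,c_i$ are $C^2$ with locally Lipschitz second derivatives, so $F_i$ is $C^1$ with locally Lipschitz first derivatives; the implicit function theorem then gives $v_i^*(\cdot;(\bar x,\bar v_i))\in C^1$ with locally Lipschitz derivative, and since $\na_x\hat f_i^{\rm obj}(x;(\bar x,\bar v_i)) = -\eta_i^*(x;(\bar x,\bar v_i))$ (the sensitivity identity~\eqref{eq:na_hat_f}, now localized to this solution map), $\hat f_i^{\rm obj}(\cdot;(\bar x,\bar v_i))$ has a locally Lipschitz gradient, and one more differentiation via~\eqref{eq:na_vstar} shows the Hessian exists and is continuous, i.e.\ $\hat f_i^{\rm obj}\in C^2$. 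For solution smoothing the argument is the same but uses the $C^3$ hypothesis on $f_i,c_i$: then $F_i\in C^2$, so $v_i^*\in C^2$ by the implicit function theorem, and $\hat f_i^{\rm sol}(x)=f_i(y_i^*(x);x)$ is a composition of $C^2$ maps, hence $C^2$. Shrinking $r$ once more to stay inside the neighborhood on which the relevant derivatives are locally Lipschitz/bounded completes the argument.

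The main obstacle is statement (ii): one has to be careful that the phrase ``the value returned by the subproblem solver'' is faithfully translated into ``the solver's output KKT point equals $v_i^*(x;(\bar x,\bar v_i))$,'' which relies on $\hat f_i$ (as a number) pinning down the solution map branch — true here because on $B((\bar x,\bar v_i),r_2)$ there is only one branch through each point, so no two distinct KKT points in this ball can give the same $\hat f_i$ value along the map. One also has to verify the chain of inclusions $B((\bar x,\bar v_i),r)\subseteq B((\bar x,\bar v_i),r_3)$ and that the perturbed point $x^+$ stays in the domain of the solution map, which is why $r$ must be taken as a strict fraction (e.g.\ half) of $\min\{r_1,r_2,r_3\}$ rather than the minimum itself. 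Everything else is a routine application of the implicit function theorem and the already-established sensitivity formulas.
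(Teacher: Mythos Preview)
Your proposal is correct and follows essentially the same approach as the paper: set $r=\min\{r_2,r_3\}$, invoke the second part of Lemma~\ref{lem:unif_bound_IFT} for (i), combine Assumption~\ref{ass:consistent_output} with the uniqueness of the solution map in $B((\bar x,\bar v_i),r_2)$ for (ii), and use the $C^1$ (resp.\ $C^2$) regularity of $v_i^*$ together with the sensitivity identity $\nabla\hat f_i^{\rm obj}=-\eta_i^*$ (resp.\ the chain rule) for (iii). Your extra halving $r\le r_1/2$ to keep $x^+\in B(\bar x,r_1)$ is unnecessary, since Assumption~\ref{ass:consistent_output} already yields $(x^+,v_i^+)\in B((\bar x,\bar v_i),r_2)$, which forces $\|x^+-\bar x\|<r_2\le r_1$; the paper simply takes $r=\min\{r_2,r_3\}$.
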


\begin{proof}
We take $r:=\min\{r_2,r_3\}$ with $r_2$ defined in Lemma~\ref{lem:unif_bound_IFT} and $r_3$ defined in Assumption~\ref{ass:consistent_output}.

\textit{Part (i):}
This claim follows directly from the second statement of Lemma~\ref{lem:unif_bound_IFT}.

\textit{Part (ii):}
By Assumption~\ref{ass:consistent_output}, the subproblem solver computes a stationary point $v_i^+$ for input $x^+$ with $(x^+,v_i^+)\in B((\bar x,\bar v_i),{r_2})\cap\SSS_i$.  
{Because the solution map solving $F_i(\cdot;\cdot)=0$ is unique in $B((\bar x,\bar v_i),r_2)$ by Lemma~\ref{lem:unif_bound_IFT}, both $(x,v_i)$ and $(x^+,v_i^+)$ are on the graph of $v_i^*(\cdot;(\bar x,\bar v_i))$.}
The claim then follows from the definition \eqref{eq:def_fhats}.

\textit{Part (iii):}
If objective smoothing is implemented,  since $v_i^*(\cdot;(\bar x,\bar v_i))$ is $C^1$ by Lemma~\ref{lem:unif_bound_IFT} and $\nabla \hat f_i^{\rm obj}(\cdot;(\bar x,\bar v_i))=\eta_i^*(\cdot;(\bar x,\bar v_i))$ by~\eqref{eq:na_hat_f}, the statement follows from the definition \eqref{eq:def_fhats}.  
If solution smoothing is implemented,  since $f_i$ and $c_i$ are $C^3$ from Assumption~\ref{assump:lipschitz}, the implicit function theorem further guarantees that $v_i^*(\cdot;(\bar x,\bar v_i))$ is $C^2$.
Then by applying the chain rule to \eqref{eq:def_fhats}, one has $\nabla \hat f_i^{\rm sol}(\cdot;(\bar x,\bar v_i))$ is $C^2$.
\end{proof}



\begin{remark}
    Proposition~\ref{prop:f_is_C2_single} and Assumption~\ref{ass:consistent_output} bridge the gap between the analysis of single- and two-stage optimization.
    As discussed, the function $\hat{f}_i$ is in general set-valued.
    However, with Assumption~\ref{ass:consistent_output} we have from Proposition~\ref{prop:f_is_C2_single} that in a neighborhood around a nondegenerate limit point, $\hat{f}_i$, as it is ``seen'' by the master problem solver, is $C^2$. 
    Therefore, we can safely borrow convergence results from smooth optimization which rely on applying Taylor's theorem near limit points.
\end{remark}

We conclude this subsection by showing that Assumption~\ref{ass:consistent_output} is very natural and can be satisfied by a line search Newton interior-point method and the most recent solution is used as a starting point, which is the \textit{warm start mechanism} introduced in Section~\ref{sec:subproblem_algo}.

\begin{algorithm}[t]
\caption{Basic Newton line search method} \label{alg:newton}
\begin{algorithmic}[1]
\Require{Initial primal-dual iterate $v_i^0$; first-stage variable $x$.}
\For{$j=0,1,2,\ldots$}
    \State Compute the Newton step from $\na_{v_i} F_i(v_i^j;x)^T\Delta v_i^j = -F_i(v_i^j;x)$.\label{line:Newton_step}
    \State Choose a step size $\beta^j\in(0,1]$ via line search.
    \State Update iterate: $v_i^{j+1} = v^j_i + \beta^j\Delta v_i^j$.
\EndFor
\end{algorithmic}
\end{algorithm}



\begin{lemma}[Newton's region of attraction with warm start]
\label{lem:newton_region_of_attraction}
    Suppose Assump\-tion~\ref{assump:full_recourse} and \ref{assump:lipschitz} hold and let $(\bar x,\bar v_i)\in \SSS_i$ be a nondegenerate stationary point.  
    Further assume that in Algorithm~\ref{alg:newton}, there exists $\beta_{\min}\in(0,1]$ so that $\beta^j\geq\beta_{\min}$ for all $j$.
    Then a subproblem solver implementing Algorithm~\ref{alg:newton} with warm start satisfies Assumption~\ref{ass:consistent_output}.
    Namely, there exists $r_3>0$ such that for $(x, v_i) \in B((\bar x,\bar v_i),r_3)\cap\SSS_i$ and $x^+= x+p$ for some $\|p\|\leq r_3$, the following holds:

    If Algorithm~\ref{alg:newton} initializes from inputs $v_i^0= v_i$ and $x^+$, it converges to a limit point $v_i^+$ so that $(x^+,v_i^+)\in B((\bar x,\bar v_i),{r_2})$.
\end{lemma}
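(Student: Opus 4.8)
The plan is to exploit the standard quadratic (or at least local linear) convergence of Newton's method in a neighborhood of the nondegenerate root, combined with the bounded step-size assumption $\beta^j \geq \beta_{\min}$, to show that warm-starting keeps the iterates inside a controlled region. First I would fix the nondegenerate stationary point $(\bar x, \bar v_i)\in\SSS_i$. Since $\nabla_{v_i}F_i(\bar v_i;\bar x)$ is nonsingular and $F_i$ is $C^1$ with locally Lipschitz derivatives (by Assumption~\ref{assump:lipschitz}), there is a neighborhood of $(\bar x,\bar v_i)$ on which $\nabla_{v_i}F_i(v_i;x)$ remains nonsingular with uniformly bounded inverse, and on which the usual Newton-Kantorovich-type estimates hold: for $x$ fixed near $\bar x$, the map $v_i\mapsto v_i - \nabla_{v_i}F_i(v_i;x)^{-T}F_i(v_i;x)$ (and its damped versions with step $\beta\geq\beta_{\min}$) is a contraction toward the unique zero $v_i^*(x;(\bar x,\bar v_i))$ of $F_i(\cdot;x)$ guaranteed by Lemma~\ref{lem:unif_bound_IFT}. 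The key quantitative facts I would extract are: (a) a radius $\rho>0$ and constant $L$ such that whenever $\|v_i - v_i^*(x;(\bar x,\bar v_i))\| \leq \rho$, every damped Newton iterate satisfies $\|v_i^{j+1} - v_i^*(x;(\bar x,\bar v_i))\| \leq (1-\beta_{\min}+\beta_{\min} L\rho)\|v_i^j - v_i^*(x;(\bar x,\bar v_i))\|$, so for $\rho$ small enough this is a genuine contraction; and (b) Lipschitz dependence of the solution map, $\|v_i^*(x;(\bar x,\bar v_i)) - \bar v_i\| \leq C_0\|x - \bar x\|$ on $B(\bar x, r_1)$, which follows from the $C^1$ property in Lemma~\ref{lem:unif_bound_IFT}.

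Next I would choose $r_3>0$ small enough to make everything fit inside $B((\bar x,\bar v_i), r_2)$. Concretely: given the input pair $(x,v_i)\in B((\bar x,\bar v_i),r_3)\cap \SSS_i$ and perturbation $\|p\|\leq r_3$ with $x^+ = x+p$, note first that since $(x,v_i)\in\SSS_i$ and the solution map is unique near $(\bar x,\bar v_i)$, we have $v_i = v_i^*(x;(\bar x,\bar v_i))$, hence $\|v_i - v_i^*(x^+;(\bar x,\bar v_i))\| \leq \|v_i^*(x;\cdot) - v_i^*(x^+;\cdot)\| \leq C_0\|p\| \leq C_0 r_3$. So if $r_3 \leq \rho/C_0$, the warm-start point $v_i^0 = v_i$ already lies in the contraction basin of $v_i^*(x^+;(\bar x,\bar v_i))$ for the problem parameterized by $x^+$. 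The contraction estimate (a) then gives $v_i^j \to v_i^*(x^+;(\bar x,\bar v_i)) =: v_i^+$, and every iterate (in particular the limit) stays within $C_0 r_3$ of $v_i^*(x^+;(\bar x,\bar v_i))$, which itself is within $C_0(\|x-\bar x\| + \|p\|) \leq 2C_0 r_3$ of $\bar v_i$; combined with $\|x^+ - \bar x\| \leq 2r_3$, we get $(x^+, v_i^+) \in B((\bar x,\bar v_i), (1+\sqrt{2})C_0 r_3 \cdot \text{const})$, and shrinking $r_3$ further makes this ball contained in $B((\bar x,\bar v_i), r_2)$, as required. Since the subproblem solver returns a KKT point (Assumption~\ref{assump:full_recourse}) and the only zero of $F_i(\cdot;x^+)$ in this region is $v_i^*(x^+;(\bar x,\bar v_i))$ by Lemma~\ref{lem:unif_bound_IFT}, the returned point must be exactly $v_i^+$.

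The main obstacle I anticipate is making the damped-Newton contraction estimate genuinely \emph{uniform} in the parameter $x$ over a neighborhood of $\bar x$, rather than just for the single fixed $x = \bar x$ — i.e., getting a single radius $\rho$ and single rate that work for all nearby $x^+$ simultaneously. This requires a compactness/continuity argument: the operator norm of $\nabla_{v_i}F_i(v_i;x)^{-1}$ and the local Lipschitz constant of $\nabla_{v_i}F_i(\cdot;x)$ vary continuously in $x$, so on a small closed ball around $\bar x$ they are uniformly bounded, which in turn makes the standard single-parameter Newton contraction argument go through with parameter-independent constants. A secondary subtlety is handling the line search: one must argue that the step-size lower bound $\beta_{\min}$ is compatible with the contraction — here the assumption that $\beta^j \geq \beta_{\min}$ is given to us, so I only need that a convex combination with the full Newton step still contracts when the iterate is close enough, which is immediate from estimate (a). The rest is bookkeeping with the radii $r_1, r_2, r_3, \rho$ and the Lipschitz constant $C_0$.
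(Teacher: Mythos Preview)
Your proposal is correct and follows essentially the same approach as the paper's proof: both establish a uniform (in $x$) damped-Newton contraction estimate of the form $\|v_i^{j+1}-v_i^*(x^+)\|\leq(1-\tfrac{\beta_{\min}}{2})\|v_i^j-v_i^*(x^+)\|$ via the standard quadratic Newton bound and the convex-combination structure of the damped step, then use the Lipschitz continuity of the solution map from Lemma~\ref{lem:unif_bound_IFT} to ensure the warm-start point lies in the contraction basin, and finally choose the radii so that the limit lands in $B((\bar x,\bar v_i),r_2)$. The paper organizes the constants slightly differently (defining $C=\min\{\tfrac{1}{2LC_1},\tfrac{r_2}{2}\}$ and $r_3=r_4/2$) and relegates the induction to an appendix, but the substance is the same.
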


\begin{proof}
    \begin{sloppypar}
    By Lemma~\ref{lem:unif_bound_IFT}, there exists a neighborhood $B((\bar x,\bar v_i),r_2)$ such that there is a unique $C^1$ solution map $v_i^*(\cdot;(\bar x,\bar v_i))$ and its graph is unique in $B((\bar x,\bar v_i),r_2)$.
    By Assumption~\ref{assump:lipschitz}, $\nabla F_i$ is locally Lipschitz continuous; see \eqref{eq:smooth_subprob_optcond}.
    Let $L$ be the Lipschitz constant of $\nabla F_i$ {in $\bar B((\bar x,\bar v_i),r_2)$}.
    {We then define $C_1:=\sup_{x\in {B(\bar x,\frac{r_2}{2})}}\|\nabla F_i(v_i^*(x;(\bar x,\bar v_i));x)^{-1}\|$, and $C:=\min\{\frac{1}{2LC_1},{\frac{r_2}{4}}\}$.}
    {Note that $C_1$ is finite since $B\!\left(\bar x, \tfrac{r_2}{2}\right)$ is a proper subset of $B(\bar x, r_2)$, within which the implicit function theorem guarantees that $\|\nabla F_i^{-1}\|$ is uniformly bounded, by Lemma~\ref{lem:unif_bound_IFT}.}
    Since $B(\bar x,r_2)$ is bounded and $v_i^*(\cdot;(\bar x,\bar v_i))$ is $C^1$, there exists a constant $r_4>0$ such that {$r_4\leq \frac{r_2}{2}$ and} $\|v_i^*(x_1;(\bar x,\bar v_i))-v_i^*(x_2;(\bar x,\bar v_i))\|\leq C$ for any $x_1,x_2\in B(\bar x,r_4)$.
    \end{sloppypar}

    We let $r_3=\frac{r_4}{2}$. Since $r_3<r_4< r_2$, $v_i=v_i^*(x;(\bar x,\bar v_i))$ {for any $(x, v_i) \in B((\bar x,\bar v_i),r_3)\cap\SSS_i$.}
    {Then for any }$\|p\|\leq r_3$ and $x^+=x+p$, we have {$x^+\in B(\bar x,2r_3)$}.
    
    It remains to show: starting from $v_i^0=v_i$ to solve the subproblem parameterized by $x^+$, Algorithm~\ref{alg:newton} converges to $v_i^* := v_i^*(x^+;(\bar x,\bar v_i))$.
    {With this claim, $\|(x^+,v_i^+)-(\bar x,\bar v_i)\|{\leq}\|x^+-\bar x\|+\|v_i^*-\bar v_i\|\leq 2r_3+C\leq r_2$, which finishes the proof.}

    {Now to show the claim above}, we first analyze the decrease given by a full Newton step.
    At {the} $j$-th iteration, let $\hat{v}_i^{j+1}:=v_i^j+\Delta v^j$ where $\Delta v^j$ is the full Newton step.
    Then, following the procedure of the convergence analysis for a standard Newton's method (see, e.g., \cite{nocedal1999numerical} proof of Theorem 3.5), one has that for {the} $j$-th iteration
    \begin{equation}
    \label{ineq:newton_decay}
        \|\hat{v}_i^{j+1}-v_i^*\|\leq  L\|\nabla F_i(v_i^*;x^+)^{-1}\|\|v_i^j-v_i^*\|^2 \leq \frac{1}{2C}\|v_i^j-v_i^*\|^2
    \end{equation}
    {whenever $(x^+,v_i^j)\in B((\bar x,\bar v_i),r_2)$.}
    {Here, the second inequality holds because $x^+ \in B(\bar x, 2r_3) \subset B\left(\bar x, \tfrac{r_2}{2}\right)$, which ensures that $\|\nabla F_i(v_i^*; x^+)^{-1}\| \le C_1$.}
    Note that since $x,x^+\in B(x,r_4)$, we have $\|v_i^0-v_i^*\|=\|v_i^*(x;(\bar x,\bar v_i))-v_i^*(x^+;(\bar x,\bar v_i))\| \leq C$,
    {and we also have $\|v_i^1-v_i^*\|=\|v_i^0+\beta^0\Delta v_i^0-v_i^*\|\leq  \beta^0\|\hat{v}_i^1-v_i^*\| +(1-\beta^0)\| v_i^0-v_i^* \|$.}
    By induction, we can prove $\|v_i^{j+1}-v_i^*\|\leq (1-\frac{\beta_{\min}}{2})\|v_i^j-v_i^*\|$ for $j=0,1,2,\cdots$.
    {Due to the space limits, we include the detailed steps of the induction in Appendix~\ref{appendix:supp_proofs}.}
    Finally since $1-\frac{\beta_{\min}}{2}<1$ for all $j$, $\|v_i^j-v_i^*\|\to 0$, and $v_i^j\to v_i^*$, as desired.
\end{proof}

We remark that the assumption of the existence of $\beta_{\min}$ is not strong.  In fact, most algorithms for solving \eqref{sec:numerical} include procedures like second-order corrections steps \cite{nocedal1999numerical} that ensure full steps can be taken when the iterate is close to a nondegenerate solution.

\subsubsection{Global convergence of a trust-region SQP framework}
\label{sec:SQP_global_convergence_fixed_mu}

In this subsection, we illustrate how Proposition~\ref{prop:f_is_C2_single} enables the adaptation of an existing global convergence proof for a general nonlinear programming algorithm\SOUT{ that is used to solve the master problem in Step~\ref{line:master_solver} of Algorithm~\ref{alg:decomposition_algo_general}} to our two-stage setting.  As a particular example, we consider the trust-region S$\ell_1$QP algorithm presented as Algorithm~11.1.1 in \cite{conn2000trust} for the solution of the master problem.

Algorithm~11.1.1 in \cite{conn2000trust} can be used to minimize the exact $\ell_1$-penalty function
\begin{equation}\label{eq:phi}
    \phi(x)= f_0(x)+\hat f_i(x)+\pi\norm{[c_0(x)]^+}_1.
\end{equation}
Here, $\pi>0$ is a penalty parameter, and it is well known that, under standard assumptions, one can recover a local optimum of \eqref{eq:smooth_master} from a local minimum of $\phi$ if $\pi$ sufficiently large; see, e.g., \cite[Theorem 14.5.1]{conn2000trust}.
To keep matters simple, we are not concerned here with finding a suitable value of $\pi$.  \SOUT{Instead, we assume that $\pi$ are been chosen sufficiently large.}


As a trust-region method, at an iterate $x^k$, we define a local model of $\phi$ as
\begin{equation}\label{eq:phi_model}
\begin{aligned}
     m(x^k,H^k,p):=\,& f_0(x^k)+\hat f_i(x^k)+\nabla [f_0(x^k)+\hat f_i(x^k)]^Tp+\half p^T H^k p + \\
     & \pi \norm{[c_0(x^k)+\na c_0(x^k)^T p]^+}_1
\end{aligned}
\end{equation}
where the symmetric matrix $H^k$ typically attempts to capture second-order curvature information, 
and compute a trial step $p^k$ as an optimal solution of
\begin{equation}\label{eq:tr_subproblem}
    \min_{p\in\RR^{n_0}} m(x^k,H^k,p),\,\,{\rm s.t.}\,\, \|p\|\leq \Delta^k.
\end{equation}
This algorithm is called S$\ell_1$QP because \eqref{eq:tr_subproblem} is equivalent to the following QP: 
\begin{equation} \label{eq:sqp_qp}
\begin{split}
    \min_{p,t} \quad & \nabla [f_0(x^k)+\hat f_i(x^k)]^Tp+\frac{1}{2}p^TH^kp+\pi\sum_{j}t_j \\
        {\rm s.t.} \quad & \nabla c_0(x^k)^Tp+c_0(x^k)\leq t,\,\,t\geq 0,\,\,\|p\|\leq \Delta^k,
\end{split}
\end{equation}
where $t\in\mathbb{R}^{m_0}$.
When second derivatives are available, $H^k$ is set as $\na_{xx}^2\LLL(x^k,\lambda^k_0)$, the Hessian of the Lagrangian for the master problem, where $\lambda^k_0$ is an estimate of the dual variables corresponding to $c_0$.
The full algorithm (Algorithm 11.1.1 in \cite{conn2000trust}) is stated in Algorithm~\ref{alg:trust_region}.


\begin{algorithm}[t]
\caption{Two-stage trust-region master solver} \label{alg:trust_region}
\begin{algorithmic}[1]
\Require{Initial master and subproblem iterate $x^0$, $v^0_i$;
trust-region radius parameters $0<\Delta^0\leq \bar \Delta$;
penalty parameter $\pi>0$;
trust-region parameters $\eta_1, \eta_2, \gamma_1, \gamma_2,\gamma_3$ satisfying
$0<\eta_1\leq\eta_2<1 \text{ and } 0<1/\gamma_3\leq\gamma_1\leq\gamma_2<1<\gamma_3.$
}
\State $k\gets 0$.
\State Call the subproblem solver to find a stationary point $(x^k,\tilde v_i^k)\in\SSS_i$ of \eqref{eq:obj_smoothing}, using $v_i^k$ as starting point.  It returns $\hat f_i(x^k)$ and $\nabla\hat f_i(x^k)$ based on $\tilde v_i^k$.
\For{$k=0,1,2,\ldots$}
    \State Choose $H^k$ and solve \eqref{eq:tr_subproblem} to get $p^k$.
    \State If $p^k=0$ terminate and return $x^k$ as a stationary point.
    \State Call the subproblem solver to find a stationary point $(x^k,\tilde v_i^k)\in\SSS_i$ of \eqref{eq:obj_smoothing}, using $v_i^k$ as starting point.  It returns $\hat f_i(x^k)$ and $\nabla\hat f_i(x^k)$ based on $\tilde v_i^k$.\label{bline:subproblem_solve_in_master}
    \State Compute the performance ratio
    $\rho^k=\frac{\phi(x^k)-\phi(x^k+p^k)}{m^k(x^k,0)-m^k(x^k,p^k)}.$
    \State Update the first-stage iterate
    $x^{k+1}\leftarrow\begin{cases}
        x^k+p^k,\quad & \text{if }\rho^k\geq \eta_1, \\
        x^k,\quad & \text{if }\rho^k< \eta_1.
    \end{cases}$ \label{bline:master_iter_update}
    \State Update the subproblem starting point
    $ v_i^{k+1}\leftarrow\begin{cases}
        \tilde v_i^k,\quad & \text{if }\rho^k\geq \eta_1, \\
         v_i^k,\quad & \text{if }\rho^k< \eta_1.
    \end{cases}$ 
    \State Update the trust-region radius. Set
    $$\Delta^{k+1}\in \begin{cases}
        [\gamma_3\Delta^k,\bar\Delta],\quad 
 & \text{if }\rho^k\geq \eta_2, \\
 [\gamma_2\Delta^k,\Delta^{k}],\quad 
 & \text{if }\rho^k\in [\eta_1,\eta_2), \\
 [\gamma_1\Delta^k,\gamma_2\Delta^{k}],\quad 
 & \text{if }\rho^k< \eta_1.
    \end{cases}$$
    \State $k\gets k+1$.
\EndFor
\end{algorithmic}
\end{algorithm}

Note that we explicitly track a warm start point $v_i^k$ for the subproblem solver, which is updated whenever an iterate is accepted.\SOUT{When a second-order subproblem solver is implemented, Assumption~\ref{ass:consistent_output} holds so that Proposition \ref{prop:f_is_C2_single} can be applied.}
In practice, there is no need for the master problem solver to store $v_i^k$. 
Instead, it can signal the subproblem solver to replace the starting point by the most recent solution whenever needed.
%

%
For our discussion here we assume that $p^k$ is an exact optimal solution of~\eqref{eq:tr_subproblem}, but this requirement can be relaxed, as long as $p^k$ provides at least as much decrease in $m^k$ as the Cauchy step; see \cite[Eq.~(11.1.9)]{conn2000trust} for its definition. 

Following from~\eqref{eq:def_fhats} and Proposition~\ref{prop:f_is_C2_single}, given a reference point $(\bar x,\bar v_i)$ we define
\begin{equation}
    \phi(x;(\bar x,\bar v_i)):=f_0(x)+\hat{f}_i(x;(\bar x,\bar v_i))+\pi\|[c_0(x)]^+\|_1
\end{equation}
so that $\phi(\cdot;(\bar x,\bar v_i))$ is a well-defined function in a neighborhood of $\bar x$.
Our goal is to show that any limit point $(x^\infty,v_i^\infty)$ of the iterate sequence corresponds to a stationary point of $\phi(\cdot;(x^\infty,v_i^\infty))$.
Since $\phi(\cdot;(\bar x,\bar v_i))$ is nonsmooth, we consider the stationary measure \cite[Eq. (11.1.4)]{conn2000trust}
\begin{equation}
    g(x):=\arg\min_{g\in \partial \phi(x;(\bar x,\bar v_i))}\|g\|,
\end{equation}
where 
$\partial \phi(x)
= \left\{g\in\RR^{n_0}:g^Td\leq \lim_{t\downarrow 0}\frac{\phi(x+td)-\phi(x)}{t}\text{ for all }d\in\RR^{n_0}\right\}
$ defines the subdifferential.
%
Theorem~\ref{thm:global_conv} is essentially \cite[Theorem 11.2.5]{conn2000trust}, stating that every limit point of the iterate sequence is a stationary point of $\phi(\cdot;(\bar x,\bar v_i))$ at which $g$ is zero.

Our proof of Theorem~\ref{thm:global_conv} argues that the original proof of Theorem 11.2.5 in \cite{conn2000trust} can still be applied because Proposition~\ref{prop:f_is_C2_single} provides the required smoothness of the problem functions.
More specifically, under our assumptions, it is possible to establish the following lemma, which corresponds to Lemma~11.2.3 in \cite{conn2000trust}.

\begin{lemma}\label{lem:rhoeta}
    Suppose Assumptions~\ref{assump:full_recourse},~\ref{assump:lipschitz},~\ref{ass:consistent_output} hold, $\{H^k\}$ is bounded, and let $(\bar x,\bar v_i)\in\SSS_i$ be a nondegenerate stationary point.
    Further assume that $\bar x$ is not a stationary point of $\phi(\cdot;(\bar x,\bar v_i))$. Let $\eta\in(0,1)$.
    Then there exists $r_5>0$ and $\Delta^{\max}>0$ so that for any iterate $(x^k,v_i^k)$ of Algorithm~\ref{alg:trust_region} in $B((\bar x,\bar v_i),r_5)$ and a step $p^k$ with $\|p^k\|\leq\Delta^k\leq \Delta^{\max}$ we have
    \begin{equation}\label{eq:localratioOK}
    \rho^k=\frac{\phi(x^k;(\bar x,\bar v_i))-\phi(x^k+p^k;(\bar x,\bar v_i))}{m^k(x^k,0)-m^k(x^k,p^k)}\geq\eta.
\end{equation}
\end{lemma}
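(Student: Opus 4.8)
The plan is to mirror the structure of the proof of Lemma~11.2.3 in \cite{conn2000trust}, but to first transfer the argument into a region where $\hat f_i$ behaves like a fixed $C^2$ function. The key enabling fact is Proposition~\ref{prop:f_is_C2_single}: near the nondegenerate stationary point $(\bar x,\bar v_i)\in\SSS_i$ there is a radius $r$ such that (i) every $(x,v_i)\in B((\bar x,\bar v_i),r)\cap\SSS_i$ lies on the graph of the unique $C^1$ solution map $v_i^*(\cdot;(\bar x,\bar v_i))$, (ii) the subproblem solver, warm-started from a point in this ball, keeps returning iterates on that same solution map, and (iii) $\hat f_i(\cdot;(\bar x,\bar v_i))$ restricted to $B(\bar x,r)$ is $C^2$. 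Consequently $\phi(\cdot;(\bar x,\bar v_i))$ is (locally) a genuine nonsmooth $\ell_1$-merit function built from $C^2$ data, so Taylor expansion with a uniform second-order remainder is available on $B(\bar x,r)$.

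First I would choose $r_5\le r$ small enough that $B((\bar x,\bar v_i),r_5)$ is contained in the neighborhood of Proposition~\ref{prop:f_is_C2_single}, and additionally shrink it so that on $B(\bar x,r_5)$ the gradients and Hessians of $f_0$, $c_0$ and $\hat f_i(\cdot;(\bar x,\bar v_i))$ are uniformly bounded and Lipschitz (possible by continuity on the compact closure), and so that the stationarity measure satisfies $\|g(x^k)\|\ge\tfrac12\|g(\bar x)\|=:\tfrac12\,\epsilon_g>0$ — here using the hypothesis that $\bar x$ is \emph{not} a stationary point of $\phi(\cdot;(\bar x,\bar v_i))$ together with lower semicontinuity of the least-norm subgradient of a piecewise-$C^2$ function. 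Next, because $p^k$ is an optimal solution of \eqref{eq:tr_subproblem} (or at least achieves Cauchy decrease), the standard Cauchy-point bound gives a predicted decrease
\begin{equation*}
    m^k(x^k,0)-m^k(x^k,p^k)\ \ge\ \kappa_{\mathrm c}\,\|g(x^k)\|\,\min\!\Big(\Delta^k,\tfrac{\|g(x^k)\|}{1+\|H^k\|}\Big),
\end{equation*}
which, on $B(\bar x,r_5)$ with $\{H^k\}$ bounded and $\|g(x^k)\|\ge\tfrac12\epsilon_g$, is bounded below by a constant multiple of $\min(\Delta^k,\text{const})$.

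Then I would bound the discrepancy between actual and predicted reduction. Writing $\ared^k=\phi(x^k;(\bar x,\bar v_i))-\phi(x^k+p^k;(\bar x,\bar v_i))$ and $\pred^k=m^k(x^k,0)-m^k(x^k,p^k)$, a second-order Taylor expansion of $f_0+\hat f_i(\cdot;(\bar x,\bar v_i))$ (valid since this is $C^2$ with Lipschitz Hessian on $B(\bar x,r_5)$) together with the triangle inequality $\big|\,\|[c_0(x^k+p^k)]^+\|_1-\|[c_0(x^k)+\nabla c_0(x^k)^Tp^k]^+\|_1\,\big|\le \|c_0(x^k+p^k)-c_0(x^k)-\nabla c_0(x^k)^Tp^k\|_1 = O(\|p^k\|^2)$ yields $|\ared^k-\pred^k|\le \kappa_H\|p^k\|^2\le\kappa_H(\Delta^k)^2$ for a constant $\kappa_H$ depending on the Hessian/Lipschitz bounds and on $\pi$. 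Combining, $|\rho^k-1|=|\ared^k-\pred^k|/\pred^k\le \kappa_H(\Delta^k)^2/(\kappa'\min(\Delta^k,\text{const}))$; choosing $\Delta^{\max}$ small enough (so that $\min(\Delta^k,\text{const})=\Delta^k$ and $\kappa_H\Delta^k/\kappa'\le 1-\eta$) forces $\rho^k\ge\eta$, which is \eqref{eq:localratioOK}. Finally I would note that the one subtlety distinguishing this from the classical setting — that $\phi$ as ``seen'' along the iterates need not be a single function — is precisely resolved by part (ii) of Proposition~\ref{prop:f_is_C2_single}: since $(x^k,v_i^k)\in B((\bar x,\bar v_i),r_5)\cap\SSS_i$ and $\|p^k\|\le\Delta^{\max}\le r$, the warm-started solver at $x^k+p^k$ returns a point on the same solution map, so $\hat f_i(x^k)=\hat f_i(x^k;(\bar x,\bar v_i))$ and $\hat f_i(x^k+p^k)=\hat f_i(x^k+p^k;(\bar x,\bar v_i))$, legitimizing the use of $\phi(\cdot;(\bar x,\bar v_i))$ in $\rho^k$.

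The main obstacle I anticipate is making the ``$\bar x$ not stationary $\Rightarrow$ $\|g(x^k)\|$ bounded below on $B(\bar x,r_5)$'' step rigorous: the least-norm-subgradient map $x\mapsto g(x)$ for the piecewise-smooth $\phi(\cdot;(\bar x,\bar v_i))$ is not continuous in general, only lower semicontinuous in norm at points where the active set is locally constant, so one must argue that $\liminf_{x\to\bar x}\|g(x)\|\ge\|g(\bar x)\|>0$ using the structure of $\partial\phi$ (a finite union of shifted polyhedra determined by $[c_0]^+$) rather than plain continuity. This is exactly the kind of bound established in the lead-up to Lemma~11.2.3 of \cite{conn2000trust}, so I would invoke that machinery, checking only that the smooth part $f_0+\hat f_i(\cdot;(\bar x,\bar v_i))$ supplied by Proposition~\ref{prop:f_is_C2_single} has the regularity their argument requires. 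Everything else is the routine trust-region ``actual-vs-predicted'' estimate.
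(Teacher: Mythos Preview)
Your proposal is correct and follows essentially the same route as the paper's proof: invoke Proposition~\ref{prop:f_is_C2_single} to get local $C^2$ smoothness of $\hat f_i(\cdot;(\bar x,\bar v_i))$ and consistency of the solver output at $x^k$ and $x^k+p^k$, derive the quadratic bound $|\phi(x^k+p^k)-m^k(x^k,p^k)|\le C\|p^k\|^2$, combine with Cauchy decrease and a lower bound on $\|g(x^k)\|$ near the nonstationary $\bar x$, and conclude $\rho^k\ge\eta$ for $\Delta^k$ small. The obstacle you flag---lower semicontinuity of $\|g(\cdot)\|$ at $\bar x$---is exactly what the paper dispatches by citing \cite[Lemma~11.1.2]{conn2000trust}, so your instinct to lean on that machinery is right.
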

\begin{proof}
    By Proposition~\ref{prop:f_is_C2_single} (iii), there exists a neighborhood $B((\bar x,\bar v_i),r)$ such that $\hat f_i(\cdot)=\hat f_i(\cdot;(\bar x,\bar v))$ is $C^1$ with Lipschitz gradient in ${B(\bar x,r)}$. 
    {By Proposition~\ref{prop:f_is_C2_single} (ii), let $\Delta^{\max}\leq r$ then $\hat{f}_i(\cdot)$ evaluates $x^k$ and $x^k+p^k$ on the same solution map.}
    This means that Taylor's theorem can be applied at both points. As a consequence, the proofs of Theorem 11.5.1 and Lemma 11.2.1 in \cite{conn2000trust} are still valid.  They imply that there exists $C_2>0$ so that
    $$|\phi(x^k+p^k;(\bar x,\bar v_i))-m^k(x^k,p^k)|\leq C_2\|p^k\|^2$$
    for any $x^k\in {B(\bar x,r)}$, assuming that $\{H^k\}$ is bounded. 
    This captures the approximation accuracy of $m^k$ to $\phi$.

    Since we assume in Algorithm~\ref{alg:trust_region} that $p^k$ is the optimal solution for~\eqref{eq:tr_subproblem}, it is also a Cauchy step \cite[Chapter 6, 11]{conn2000trust}: there exists $\delta>0$, $r_6>0$ and $\kappa\in(0,1)$ such that for ${x^k}\in B(\bar x,r_6)$,
    \begin{equation}\label{ineq:Cauchy_decrease}
        m^k(x^k,0)-m^k(x^k,p^k)\geq \kappa \|g(x^k)\|\min\{\delta,\Delta^k\}.
    \end{equation}
    
    Since $\bar x$ is not stationary, there exists $r_7>0$ and $\epsilon_1>0$ such that $\|g(x^k)\|\geq \epsilon_1$ for all $x^k\in {B(\bar x,r_7)}$; see, e.g., \cite[Lemma 11.1.2]{conn2000trust}. 
    By $m^k(x^k,0)=\phi(x^k)$,~\eqref{ineq:Cauchy_decrease}, and $\|p^k\|\leq\Delta^k$, we have that there exists $r_8\leq \min\{r,r_6,r_7\}$ and $\Delta_1^{\max}\leq\delta$ such that for any $x\in {B(\bar x,r_8)}$ and $\Delta^k\leq \Delta_1^{\max}$, 
    \begin{equation}
    \begin{aligned}
        \rho^k & =1-\frac{\phi(x^k+p^k;(\bar x,\bar v_i))-m^k(x^k,p^k)}{m^k(x^k,0)-m^k(x^k,p^k)}  \geq 1-\frac{|\phi(x^k+p^k;(\bar x,\bar v_i))-m^k(x^k,p^k)|}{\kappa\|g(x^k)\|\min\{\delta,\Delta^k\}} \\
        & \geq 1-\frac{C_2\|p^k\|^2}{\kappa\epsilon_1\min\{\delta,\Delta^k\}} = 1-\frac{C_2\|p^k\|^2}{\kappa\epsilon_1\Delta^k}  \geq 1-\frac{C_2\|p^k\|}{\kappa\epsilon_1}.
    \end{aligned}
    \end{equation}
    Finally, one can pick $r_5\leq r_8$ and $\Delta^{\max}\leq \min\{\Delta^{\max}_1,\frac{\kappa\epsilon_1(1-\eta)}{C_2}\}$ so that for any $x\in {B(\bar x,r_5)}$ and $\|p^k\|\leq \Delta^k\leq \Delta^{\max}$, $\rho^k\geq \eta$. 
    This finishes the proof.
\end{proof}

\begin{theorem}
\label{thm:global_conv}
    Suppose Assumption~\ref{assump:full_recourse}, \ref{assump:lipschitz}, and \ref{ass:consistent_output} hold and $\{H^k\}$ is bounded. 
    Let $(x^\infty,v_i^\infty)$ be a limit point of the sequence $\{(x^k,v_i^k)\}$ generated by Algorithm~\ref{alg:trust_region}, with the merit function \eqref{eq:phi} and the model \eqref{eq:phi_model}.
    If $\nabla_{v_i}F_i(v_i^\infty;x^\infty)$ is nonsingular, then $x^\infty$ is a stationary point of $\phi(\cdot;(\bar x,\bar v_i))$. 
\end{theorem}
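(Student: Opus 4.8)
The plan is to proceed by contradiction: suppose $x^\infty$ is \emph{not} a stationary point of $\phi(\cdot;(x^\infty,v_i^\infty))$. Identifying the reference point $(\bar x,\bar v_i):=(x^\infty,v_i^\infty)$, the hypothesis that $\nabla_{v_i}F_i(v_i^\infty;x^\infty)$ is nonsingular says exactly that $(\bar x,\bar v_i)\in\SSS_i$ is a nondegenerate stationary point, so Proposition~\ref{prop:f_is_C2_single} supplies a radius $r>0$ with the following effect: for any master iterate $x^k$ with $(x^k,v_i^k)\in B((\bar x,\bar v_i),r)\cap\SSS_i$, the value $\hat f_i(x^k)$ returned by the warm-started subproblem solver equals $\hat f_i(x^k;(\bar x,\bar v_i))$; consecutive evaluations along a step of length at most $r$ remain on the \emph{same} solution map (part~(ii)); and $\hat f_i(\cdot;(\bar x,\bar v_i))$ is $C^2$ on $B(\bar x,r)$ (part~(iii)). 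Hence, as long as the iterates of Algorithm~\ref{alg:trust_region} stay in $B((\bar x,\bar v_i),r)$ and take steps $\|p^k\|\le r$, the merit function, model, and performance ratios the algorithm actually manipulates are precisely those of the classical S$\ell_1$QP trust-region method \cite[Algorithm~11.1.1]{conn2000trust} applied to the \emph{smooth} single-stage problem $\min\,f_0(x)+\hat f_i(x;(\bar x,\bar v_i))$ subject to $c_0(x)\le0$.

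I would then import the local behavior from Lemma~\ref{lem:rhoeta}: invoking it with $\eta:=\eta_2$ and using the assumed non-stationarity of $\bar x$ produces constants $r_5>0$ and $\Delta^{\max}>0$ (shrunk if necessary so that $\Delta^{\max}\le r$) such that every iterate $(x^k,v_i^k)\in B((\bar x,\bar v_i),r_5)$ with $\Delta^k\le\Delta^{\max}$ is \emph{very successful}: $\rho^k\ge\eta_2$, so $x^k+p^k$ is accepted and $\Delta^{k+1}\ge\Delta^k$. The estimates inside the proof of Lemma~\ref{lem:rhoeta} --- the Cauchy decrease~\eqref{ineq:Cauchy_decrease} and the uniform bound $\|g(x^k)\|\ge\epsilon_1>0$ valid near $\bar x$ --- further give that each such iteration strictly reduces the merit function by at least $\eta_2\kappa\epsilon_1\min\{\delta,\Delta^k\}$.

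The contradiction is then obtained exactly along the lines of \cite[Theorem~11.2.5]{conn2000trust}. Because $(x^\infty,v_i^\infty)$ is a limit point, infinitely many iterates enter $B((\bar x,\bar v_i),r_5)$; once there, every step is short ($\|p^k\|\le\Delta^k\le\Delta^{\max}\le r$), and Proposition~\ref{prop:f_is_C2_single}(ii) together with the warm start (Assumption~\ref{ass:consistent_output}) keeps the subproblem solver on one solution map, so the reduction to the smooth setting is justified for all the iterations used below. Since only accepted steps change $x^k$ and each strictly decreases the (locally well-defined) merit function $\phi(\cdot;(\bar x,\bar v_i))$, the scalar sequence of merit-function values is nonincreasing; it has a subsequence converging to $\phi(x^\infty;(\bar x,\bar v_i))$ by continuity, hence converges, forcing consecutive differences to $0$. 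On the other hand, the trust-region update rule prevents $\Delta^k$ from staying small while $x^k$ remains near $\bar x$ (a very-successful step never shrinks it, and an unsuccessful one can occur only when $\Delta^k>\Delta^{\max}$), so there are infinitely many very-successful iterations with $\Delta^k$ bounded below by a fixed positive constant, each contributing a decrease bounded below by a fixed positive constant. This contradicts the convergence of the merit-function values, so $x^\infty$ must be a stationary point of $\phi(\cdot;(x^\infty,v_i^\infty))$.

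The main obstacle is not the trust-region argument itself, which is essentially lifted from \cite{conn2000trust} once Lemma~\ref{lem:rhoeta} is in place, but the \emph{local consistency} bookkeeping: one must verify that every function and gradient evaluation entering the argument is governed by a single $C^2$ branch $\hat f_i(\cdot;(\bar x,\bar v_i))$, i.e.\ that the iterates that matter stay inside $B((\bar x,\bar v_i),r)$, that iterates temporarily leaving and re-entering the neighborhood are controlled (as in the classical proof, by trading accumulated displacement against accumulated merit decrease), and that the warm-started subproblem solver never silently jumps to a different solution map --- which is exactly what Proposition~\ref{prop:f_is_C2_single}(ii) and Assumption~\ref{ass:consistent_output} rule out for sufficiently small steps. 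Once this layer is in place, the classical $\ell_1$-penalty trust-region convergence theory closes the proof.
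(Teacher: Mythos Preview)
Your proposal is correct and follows essentially the same approach as the paper: a proof by contradiction that adapts \cite[Theorem~11.2.5]{conn2000trust}, with Lemma~\ref{lem:rhoeta} replacing the global smoothness assumption and Proposition~\ref{prop:f_is_C2_single} supplying the local $C^2$ branch on which all the relevant merit-function evaluations agree. The paper is slightly terser --- it explicitly invokes \cite[Lemma~11.2.4]{conn2000trust} for the lower bound on $\Delta^k$ along the successful subsequence rather than re-deriving it --- but the logical skeleton (subsequence of successful iterates near $(x^\infty,v_i^\infty)$, trust radius bounded away from zero there, uniform merit decrease contradicting monotone convergence) is identical to yours.
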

This claim was originally proven in \cite{conn2000trust}
assuming all functions in \eqref{eq:smooth_master} are globally continuously differentiable.
In our setting, however, we know from Proposition~\ref{prop:f_is_C2_single} that $\hat f_i$, under Assumption~\ref{assump:lipschitz} and~\ref{ass:consistent_output}, is only guaranteed to be differentiable in a neighborhood $B((x^\infty,v_i^\infty),r)$ of any nondegenerate limit point $(x^\infty,v_i^\infty)\in\SSS_i$.
In the following we argue that the proof in \cite{conn2000trust} nevertheless also applies in our setting. 

\begin{proof}[Proof of Theorem~\ref{thm:global_conv}]
In \cite{conn2000trust}, Theorem 11.2.5 is proved by contradiction:  Suppose there exists a limit point $(x^\infty,v_i^\infty)$ of $\{(x^k,v_i^k)\}$ so that $x^\infty$ is not a first-order critical point.  Let $\{(x^k,v_i^k)\}_{\KKK}$ be a subsequence that converges to $(x^\infty,v_i^\infty)$.  Since $(x^k,v_i^k)$ does not change in subsequent iterations of Algorithm~\ref{alg:trust_region} when a new iterate is not accepted, one can assume that $\KKK$ includes only successful iterations in which $\rho^k\geq\eta_1$.

Then, by Lemma 11.2.4 in \cite{conn2000trust} (which we discuss in the next paragraph), there exists a threshold $\Delta_{\min}>0$ so that $\Delta_k\geq\Delta_{\min}$ for all $k\in\KKK$.
Following standard arguments and using \eqref{ineq:Cauchy_decrease}, this implies that $\phi(x^k;(x^\infty,v_i^\infty))-\phi(x^{k+1};(x^\infty,v_i^\infty))\geq c_\phi$ for some $c_\phi>0$ for all $k\in\KKK$.
And because $\phi(x^k;(x^\infty,v_i^\infty))$ is monotonically decreasing and bounded below, this yields the desired contraction.

What remains to establish is Lemma 11.2.4,
which we state here in a weaker form that suffices for the proof of Theorem 11.2.5:  Given the sequence $\{(x^k,v_i^k)\}_{\KKK}$ from above, there exists $\Delta_{\min}>0$ so that $\Delta_k\geq\Delta_{\min}$ for all $k\in\KKK$.
The proof of Lemma 11.2.4 {makes repeated use of the fact that there exists} a neighborhood $N$ around $x^*$ and $\Delta^{\max}>0$ so that~\eqref{eq:localratioOK} holds whenever $k\in\KKK$, $x^k\in N$, and $\Delta^k\leq\Delta^{\max}$.
This fact has been proven as Lemma~\ref{lem:rhoeta} above.
%
\end{proof}

\begin{remark}
If $x^{\infty}$ is feasible and LICQ holds at $x^{\infty}$, then there exists multipliers $\lambda_0^{\infty}$ so that $F_0(x^\infty,\lambda_0^\infty;\mu)=0$; see Theorem 17.4 in \cite{nocedal1999numerical}.

\end{remark}

\subsubsection{Nondegeneracy of the limit point}
\label{sec:nondegenerate_justification}
An assumption frequently made in this paper is that a point (an iterate or more crucially, a limit point of iterates) $(x,v_i)$ is nondegenerate, i.e., $\nabla_{v_i}F_i(v_i;x)$ is nonsingular.  While this may seem overly restrictive from a first impression, we argue next that it is rather benign and holds in many scenarios.

Recalling $F_i$ from \eqref{eq:smooth_subprob_optcond}, it can be shown by block elimination\SOUT{(recall that $s_i, \lam_i>0$)} that $\nabla_{v_i}F_i$ is nonsingular if and only if the symmetric indefinite matrix
\[ 
\begin{bmatrix}
        W_i & 0  & A_i^T \\
        0 &\Sigma_i & I \\
        A_i & I  & 0
    \end{bmatrix}
\]
with $W_i=\na_{y_iy_i}^2\LLL_i(v_i;x,\mu)$, $\Sigma_i={\text{diag}(s_i)^{-1}\text{diag}(\lambda_i)}$, and $A_i=\na_{y_i}c_i(y_i; x)^T$ is nonsingular, or equivalently, {the reduced Hessian} $(W_i+A_i^T\Sigma_iA_i)$ is nonsingular.
{
This nonsingularity ensures sufficient second-order information for solving the system along feasible directions.  
The role of the reduced Hessian is well established; see~\cite[Section~16--17]{nocedal1999numerical} for details.  
The condition clearly holds when $W_i$ is positive definite, or when $W_i$ is positive semi-definite, $m_i \geq n_i$, and $A_i$ has full column rank.  
These are sufficient but not necessary conditions and can also hold in nonconvex subproblems.  
For example, subproblems with convex objectives and nonconvex constraints (where the constraint curvature dominates the objective) may still satisfy this at a limit point.
Otherwise, failure of nonsingularity can result in degeneracy, such as intersecting solution maps; see Example~\ref{ex:curvature}.
}
In such cases, one could consider regularization techniques that convexify the problem so that $W_i$ becomes nonsingular, e.g., Tikhonov regularization \cite{borges2021regularized} and Moreau envelopes \cite{liu2024moreau}.

Finally we remark that a limit point of the primal-dual sequence $\{(x^k,v_i^k)\}$ might not exist, even if the primal variables $x^k$, $y_i^k$, and $s_i^k$ converge.  
We have seen in Figure~\ref{fig:linear_solution_maps_smooth} of Section~\ref{sec:behaviors_nonconvex} that as $x\to1$, $s_1$ converges to zero.  In that case, due to the complementarity $s_i\circ \lam_i=\mu e$, $\lam_i^k$ would converge to infinity and a limit point $(\bar x, \bar v_i)$ would not exist.  
However, if objective smoothing is used, then $\hat f_i(x)$ includes the $\log$-barrier term in \eqref{eq:obj_smoothing},  which goes to infinity when $s^k\to 0$; see, e.g., the right panel of Figure~\ref{fig:linear_solution_maps_smooth}.  
Therefore, a minimum-seeking first-stage algorithm would automatically be repelled from the degeneracy caused by vanishing slacks.

\subsection{Convergence for decreasing smoothing parameters}
\label{sec:global_conv_changing_mu}

In this subsection we study the global convergence\SOUT{ of Algorithm~\ref{alg:decomposition_algo_general}.
Specifically, we study the convergence} of the master problem iterates, as $\mu^l\to 0$. 

We begin by recalling the fact that the original two-stage problem \eqref{eq:intro_master} and \eqref{eq:intro_subproblem} is equivalent to the undecomposed single-stage optimization problem:
\begin{equation} \label{eq:undecomp_Obj}
\begin{split}
    \min_{ x\in \mathbb{R}^{n_{0}},\{y_i \in \mathbb{R}^{n_{i}} \}} \quad & f_0(x) + \sum_{i=1}^N f_i(y_i;x) \\
        {\rm s.t.} \quad & c_0(x) \leq 0 \\
        & c_i(y_i;x) \leq 0, \quad \forall i = 1, \dots, N.
\end{split}
\end{equation}
When objective smoothing is implemented, the smoothed two-stage optimization problem can also be equivalently written as:
\begin{equation}
\label{eq:undecomp_Obj_smoothed}
\begin{split}
    \min_{ x,\{y_i,s_i,\tilde x_i \}} \quad & f_0(x) + \sum_{i=1}^N \left[f_i(y_i;\tilde x_i)-\mu\sum_{j}\ln(s_{ij})\right] \\
        {\rm s.t.} \quad & c_0(x) \leq 0 \\
        & c_i(y_i;\tilde x_i)+s_i= 0, \quad \forall i = 1, \dots, N, \\
        & \tilde x_i-x=0,\quad \quad \qquad\forall i = 1, \dots, N.
\end{split}
\end{equation}

We will next show that any limit point generated by Algorithm~\ref{alg:decomposition_algo_general} with objective smoothing, i.e., any limit point of KKT points for~\eqref{eq:undecomp_Obj_smoothed} as $\mu\to0$, is a KKT point for \eqref{eq:undecomp_Obj}.

Let $u^l=(x^l,s_0^l,\lam_0^l)$ be the sequence generated by Algorithm~\ref{alg:decomposition_algo_general}.  
Furthermore, for each $x^l$, let $v_i^l=(y_i^*(x^l;\mu^l), \tilde x_i^*(x^l;\mu^l), s_i^*(x^l;\mu^l), \lambda_i^*(x^l;\mu^l), \eta_i^*(x^l;\mu^l))$
be the corresponding primal-dual solution of \eqref{eq:obj_smoothing_proj} that the subproblem solver generated.  





\begin{theorem}
    Suppose Assumption~\ref{assump:full_recourse} and \ref{assump:lipschitz} hold, and Algorithm~\ref{alg:decomposition_algo_general} generates a sequence of iterates $\{u^l\}$ with corresponding subproblem solutions $\{v_i^l\}$.
    Let $(u^*,v_i^*)$ be a limit point of $\{(u^l,v_i^l)\}$.  Then 
    $(x^*,y_i^*)$ is a KKT point of \eqref{eq:undecomp_Obj}.
    
\end{theorem}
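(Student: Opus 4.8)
The plan is a routine ``stationarity passes to the limit'' argument that fuses the exact optimality of the subproblem solutions with the asymptotically exact optimality of the master iterates. First I would pass to the subsequence along which $(u^l,v_i^l)\to(u^*,v_i^*)$, writing $u^*=(x^*,s_0^*,\lambda_0^*)$ and $v_i^*=(y_i^*,\tilde x_i^*,s_i^*,\lambda_i^*,\eta_i^*)$, and recall $\mu^l\to0$. Along this subsequence Algorithm~\ref{alg:decomposition_algo_general} enforces two families of relations: the subproblem solver returns an exact KKT point, so $F_i(v_i^l;x^l,\mu^l)=0$ for every $l$ (Assumption~\ref{assump:full_recourse}), while Step~\ref{line:master_solver} guarantees $\theta_0(u^l;\mu^l)\le c_0\mu^l$, which with $\theta_0(u;\mu)=\|F_0(u;\mu)\|$ for $F_0$ as in \eqref{eq:smooth_master_optcond} gives $\|F_0(u^l;\mu^l)\|\le c_0\mu^l$. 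Since $c_0\mu^l\to0$, it remains to check that letting $l\to\infty$ in these two systems reproduces exactly the KKT system of \eqref{eq:undecomp_Obj} at $(x^*,\{y_i^*\})$.

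For the subproblem side I would expand \eqref{eq:smooth_subprob_optcond}. Its two stationarity blocks are $\na_{y_i}f_i+\na_{y_i}c_i\,\lambda_i=0$ and $\na_{\tilde x_i}f_i+\na_{\tilde x_i}c_i\,\lambda_i+\eta_i=0$; its remaining blocks are the perturbed complementarity $s_i^l\circ\lambda_i^l=\mu^l e$ and the feasibility equations $c_i(y_i^l;\tilde x_i^l)+s_i^l=0$ and $\tilde x_i^l-x^l=0$. Using continuity of the first derivatives of $f_i$ and $c_i$ (Assumption~\ref{assump:lipschitz}), $\mu^l\to0$, and $s_i^l,\lambda_i^l>0$, the limit yields $\tilde x_i^*=x^*$; $s_i^*,\lambda_i^*\ge0$ with $s_i^*\circ\lambda_i^*=0$; $c_i(y_i^*;x^*)=-s_i^*\le0$, hence $\lambda_i^*\circ c_i(y_i^*;x^*)=0$; and the $y_i$-stationarity $\na_{y_i}f_i(y_i^*;x^*)+\na_{y_i}c_i(y_i^*;x^*)\lambda_i^*=0$. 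These are precisely the feasibility, dual feasibility, complementarity, and $y_i$-stationarity conditions of \eqref{eq:undecomp_Obj} associated with $c_i(y_i;x)\le0$. I would also record the limit of the $\tilde x_i$-block, $\eta_i^*=-\big[\na_{\tilde x_i}f_i(y_i^*;x^*)+\na_{\tilde x_i}c_i(y_i^*;x^*)\lambda_i^*\big]$; since $\tilde x_i^*=x^*$, the $\na_{\tilde x_i}$ derivatives here coincide with the corresponding $\na_x$ derivatives of $f_i(\cdot;\cdot)$ and $c_i(\cdot;\cdot)$ entering \eqref{eq:undecomp_Obj}.

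For the master side, the key identity is that with objective smoothing the subproblem solver reports $\na_x\hat f_i^{\obj}(x^l;\mu^l)=-\eta_i^l$ by \eqref{eq:na_hat_f}, so the first block of $F_0(u^l;\mu^l)$ in \eqref{eq:smooth_master_optcond} is $\na_x f_0(x^l)-\sum_i\eta_i^l+\na_x c_0(x^l)\lambda_0^l$, the second is $c_0(x^l)+s_0^l$, and the third is $s_0^l\circ\lambda_0^l$. With $s_0^l,\lambda_0^l\ge0$, $\|F_0(u^l;\mu^l)\|\to0$ gives $c_0(x^*)\le0$, $\lambda_0^*\ge0$, $\lambda_0^*\circ c_0(x^*)=0$, and $\na_x f_0(x^*)-\sum_i\eta_i^*+\na_x c_0(x^*)\lambda_0^*=0$. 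Substituting the expression for $\eta_i^*$ from the previous step turns the last equation into $\na_x f_0(x^*)+\sum_i\big[\na_x f_i(y_i^*;x^*)+\na_x c_i(y_i^*;x^*)\lambda_i^*\big]+\na_x c_0(x^*)\lambda_0^*=0$, which is the $x$-stationarity of \eqref{eq:undecomp_Obj}. Collecting this with the subproblem-side conclusions shows that $(x^*,\{y_i^*\})$ is a KKT point of \eqref{eq:undecomp_Obj} with multipliers $(\lambda_0^*,\{\lambda_i^*\})$, completing the proof.

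The main obstacle is bookkeeping rather than analysis: one must track the copy variables $\tilde x_i$ and their multipliers $\eta_i$ carefully and verify that the identity $\na_x\hat f_i=-\eta_i^*$ recombines with the $\tilde x_i$-stationarity to rebuild the monolithic first-stage gradient with no leftover terms. A secondary point worth a remark is that, unlike the fixed-$\mu$ results, nondegeneracy of the limit point is not assumed here, so $\na_x\hat f_i=-\eta_i^*$ should be read simply as the quantity the solver computes and feeds into $F_0$; the whole argument uses only the algebraic forms of $F_i$ and $F_0$, the sign constraints on the slacks and multipliers, and continuity, never the implicit-function-theorem interpretation of that identity. One should also note explicitly that the residual bound $\|F_0(u^l;\mu^l)\|\le c_0\mu^l$ holds for every $l$, hence in particular along the chosen subsequence.
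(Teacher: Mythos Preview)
Your proposal is correct and follows essentially the same route as the paper: pass to a convergent subsequence, take limits in the exact subproblem KKT system $F_i(v_i^l;x^l,\mu^l)=0$ and in the approximate master residual $\|F_0(u^l;\mu^l)\|\le c_0\mu^l\to0$, use the identity $\nabla_x\hat f_i=-\eta_i$ from \eqref{eq:na_hat_f}, and then eliminate $\eta_i^*$, $\tilde x_i^*$, $s_i^*$ to recover the KKT conditions of \eqref{eq:undecomp_Obj}. Your remark that nondegeneracy is not needed here---because the argument uses only the algebraic form of $F_0,F_i$ and continuity, not the implicit-function-theorem interpretation of $\nabla_x\hat f_i=-\eta_i^*$---is a useful clarification that the paper leaves implicit.
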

\begin{proof}
    To simplify the notation, we assume without loss of generality that $N=1$.  Then the KKT conditions for \eqref{eq:undecomp_Obj} are
    \begin{equation}\label{eq:kktlarge}
    \begin{split}
        \na f_0(x) + \na_{x}  f_1(y_1;x) + \na c_0(x)\lam_0 + \na_{x} c_1(y_1;x)\lam_1 & = 0 \\
        \na_{y_1}  f_1(y_1;x) + \na_{y_1} c_1(y_1;x)\lam_1 & = 0 \\
        c_0(x) \leq 0 & \perp \lam_0 \geq 0 \\
        c_1(y_1;x) \leq 0 & \perp \lam_1 \geq 0.
    \end{split}
    \end{equation}


    Let $\{(u^{l_i},v_1^{l_i})\}$ be a subsequence converging to $(u^*,v_1^*)$.
    In Step~\ref{line:master_solver} of Algorithm~\ref{alg:decomposition_algo_general}, for each internal iterate, the master problem solver calls the subproblem solver to obtain $\na \hat f_1(x^l;\mu^l)$ for its solution $x^l$.  
    For the objective smoothing, this quantity is computed by \eqref{eq:na_hat_f}, i.e., $\na \hat f_1(x^{l_i};\mu^{l_i})=-\eta_1^*(x^{l_i};\mu^{l_i})$.  
    Taking the limit for the subsequence, we obtain that $\na \hat f_1(x^{l_i};\mu^{l_i}) \to -\eta_1^{*}$, where $\eta_1^*$ is a subvector in $v_1^*$.  
    Substituting \eqref{eq:na_hat_f} into $F_0(u;\mu)$ defined in \eqref{eq:smooth_master_optcond} and noting that $\|F_0(u^{l_i};\mu^{l_i})\|\to 0$ by the criterion in Step~\ref{line:master_solver}, we see that
    \begin{equation}
    \label{eq:cond_part1}
    \begin{split}
        \na f_0(x^*) - \eta_1^* + \na c_0(x^*)\lam^*_0 & = 0 \\
        c_0(x^*) \leq 0 & \perp \lam^*_0 \geq 0.
    \end{split}
    \end{equation}
    On the other hand, $v_1^{l_i}$ satisfies the KKT conditions for the subproblem \eqref{eq:obj_smoothing_proj}, i.e., $F_1(v_1^{l_i};x^{l_i},\mu^{l_i})=0$ with $s^{l_i}_1,\lam^{l_i}_1 \geq 0$.  Taking the limit yields
    \begin{equation}
    \label{eq:cond_part2}
    \begin{split}
        \na_{y_1} f_1(y_1^*;\tilde x_1^*) +\na_{y_1} c_1(y^*_1;\tilde x_1^*)\lam_1^* & = 0 \\
        \na_{\tilde x_1} f_1(y^*_1;\tilde x_1^*) + \na_{\tilde x_1} c_1(y^*_1;\tilde x_1^*)\lam_1^* + \eta_1^* & = 0 \\
        c_1(y^*_1;\tilde x_1^*) + s^*_1  & = 0\\
        \tilde x_1^* - x^* & = 0 \\
        s^*_1\geq 0 & \perp \lam^*_1 \geq 0.
    \end{split}
    \end{equation}
    Combining~\eqref{eq:cond_part1},~\eqref{eq:cond_part2}, and eliminating $\eta_1^*$, $\tilde x_1^*$, and $s_1^*$ yields \eqref{eq:kktlarge}.
\end{proof}

\section{Fast local convergence}
\label{sec:local_convergence}

In this section we present a variation of Algorithm~\ref{alg:decomposition_algo_general} that exhibits a superlinear local convergence rate under standard nondegeneracy assumptions.  
Our discussion here only pertains to objective smoothing \eqref{eq:obj_smoothing}, and we also assume that Hessians can be computed.
We assume that the master solver is a second-order SQP solver with $H^k$ in~\eqref{eq:phi_model} being the exact Hessian, and the subproblem solver is a Newton-based interior-point method (Algorithm~\ref{alg:newton}).
We let $N=1$ for simplicity, and we make the following assumption throughout this section.
\begin{assumption}
\label{assump:local}
Let $w^*=(u^*,v_1^*)$ be the primal-dual solution corresponding to a local minimum of the undecomposed problem~\eqref{eq:undecomp_Obj} that satisfies the second-order sufficiency conditions \cite[Theorem 12.6]{nocedal1999numerical} and strict complementarity.  
Further suppose that $f_0(\cdot)$, $f_1(\cdot;\cdot)$, $c_0(\cdot)$, and $c_1(\cdot;\cdot)$ are $C^2$ and have locally Lipschitz continuous second derivatives at $(x^*,y_1^*)$. 
Finally, LICQ holds at $(x^*,y_1^*)$ for~\eqref{eq:undecomp_Obj}.
\end{assumption}

We remark that the literature on two-stage optimization \cite{demiguel2008decomposition,tammer1987application} achieves a superlinear rate only by assuming SLICQ, which requires LICQ for all (nonsmoothed) subproblems.
This is quite restricted, and we instead consider a more general setting, where LICQ holds only for the undecomposed problem~\eqref{eq:undecomp_Obj}.
For instance, Example~\ref{ex:bilinear} does not satisfy SLICQ, but satisfies Assumption~\ref{assump:local} at the global optimum.

\subsection{Algorithm variant with the extrapolation step}
\label{sec:extrapolation}

Recall the undecomposed formulation of the barrier problem when $N=1$: 
\begin{equation}\label{eq:undecomp_Obj_local}
\begin{aligned}
    \min_{ z=(x,y_1,\tilde x_1,s_1) } \quad & \varphi(z;\mu):=f_0(x) +  f_1(y_1;\tilde x_1)-\mu\sum_j\ln(s_{1j})\\
        {\rm s.t.} \quad &  c_0(x) \leq 0,\,  \tilde c_1(z):=\begin{pmatrix}
        c_1(y_1;\tilde x_1)+s_1\\
        \tilde x_1-x
    \end{pmatrix}=0.
\end{aligned}
\end{equation}
Let $w:=(u,v_1)$ be primal-dual variables, $z:=(x,y_1,\tilde x_1,s_1)$ be primal variables, and $\LLL(w)=f_0(x)+f_1(y_1;\tilde x_1)+c_0(x)^T\lambda_0+\tilde c_1(z)^T(\lambda_1^T,\eta_1^T)^T$ be the Lagrangian of ~\eqref{eq:undecomp_Obj} after introducing $\tilde x_1$.
Then, the optimality conditions of~\eqref{eq:undecomp_Obj_local} are given by
\begin{equation}\label{eq:undecomp_optcond_local}
    F^C(w;\mu):= 
    \begin{pmatrix}
        \nabla_{x}\LLL(w) \\
        c_0(x)+s_0\\
        \max\{\min\{s_0,\lambda_0\},-s_0,-\lambda_0\} \\
        F_1(w;\mu)
    \end{pmatrix}=0,
\end{equation}
where $s_0$ are slack variables and $\max\{\min\{s_0,\lambda_0\},-s_0,-\lambda_0\}$ captures the complementarity conditions of $c_0$.
Note that $\mu$ enters $F^C$ only by $s_1\circ \lambda_1-\mu e$ in $F_1$, defined in~\eqref{eq:smooth_subprob_optcond}.

Our method achieves superliner convergence by solving a QP subproblem whenever $\mu$ is updated:
when the algorithm updates $\mu^l$ to $\mu^{l+1}$ at the point $z^l$ (Step~\ref{line:decay_mu} in Algorithm~\ref{alg:decomposition_algo_general}), the primal update $\Delta z^l$ is computed by solving 
\begin{subequations}\label{eq:sqp_local}
\begin{align}
    \min_{\Delta z} \quad & \nabla_z \varphi(z^l;\mu^{l+1})^T\Delta z+ \frac{1}{2}\Delta z^T \nabla_{zz}^2\LLL(w^l)\Delta z + \frac{1}{2}\Delta s_1^T(S_1^l)^{-1}\Lambda_1^l\Delta s_1\\
        {\rm s.t.} \quad &  \nabla_x c_0(x^l)^T\Delta x+c_0(x^l) \leq 0, \qquad\qquad\qquad\qquad[\lam_0^+]\label{sqp_local_ineq} \\
        \quad & \nabla_z\tilde c_1(z^l)^T\Delta z+\tilde c_1(z^l)=0, \qquad\qquad\qquad\qquad\, [\lam_1^+,\eta_1^+]
\end{align}
\end{subequations}
where $S_1^l=\text{diag}(s_1^l)$ and $\Lambda_1^l=\text{diag}(\lambda_1^l)$.

\begin{sloppypar}
With an optimal solution $\Delta z$ of \eqref{eq:sqp_local} and multipliers $\lam_0^+$ and $\lam_1^+$, we get the primal-dual step $\Delta w=(\Delta z,\Delta\lambda_0,\Delta\lambda_1,\Delta\eta_1)=(\Delta z,\lam_0^+-\lambda_0,\lam_1^+-\lambda_1,\eta_1^+-\eta_1)$, which we refer to as the \textit{extrapolation step}; see the next subsection for more details.
The new iterate is then computed as $w^{l+1} = w^l + \alpha^l\Delta w^l$, with the step size defined by the fraction-to-the-boundary rule 
\begin{equation}\label{eq:ftbr}
    \alpha^l=\max\left\{\alpha\in(0,1]:
    s_1^l+\alpha\Delta s_1^l\geq(1-\tau^l)s_1^l,\,\,
    \lam_1^l+\alpha\Delta \lam_1^l\geq(1-\tau^l)\lam_1^l
    \right\}
\end{equation}
with a fraction-to-the-boundary parameter $\tau^l\in(0,1)$.
\end{sloppypar}

\begin{algorithm}[t]
\caption{Two-stage decomposition algorithm with extrapolation steps} \label{alg:decomposition_algo_local}
\begin{algorithmic}[1]
\Require{Initial iterate $\tilde u^0$, constants $\mu^0>0$, $c_{\mu,1}\in(0,1)$, $c_{\mu,2}\in(1,2)$, $c_0>0$, $\tau^{\max}>0$.
} 

\State Set $l\gets 0$.
\State Starting from $\tilde u^l$, call the SQP method to solve~\eqref{eq:smooth_master} with $\mu=\mu^l$ to find $u^{l}$ so that $\theta_0(u^l;\mu^l)\leq c_0\mu^l$.
\label{line:master_solver2}
\State Set $w^l=(u^l,v_1^l)$ where $v_1^l$ is the last subproblem solution corresponding to $u^l$.
\label{line:define_z}
\While{$\theta(w^l;\mu^l)\leq c_0\mu^l$}\label{line:while}
\State Set $\mu^{l+1}= \min\{c_{\mu,1}\mu^{l},(\mu^{l})^{c_{\mu,2}}\}$ and $\tau^l=\min\{\tau^{\max},\mu^{l+1}\}$. \label{line:decay_mu_local}
\State Solve \eqref{eq:sqp_local} to get $\Delta w^l$, calculate $\alpha^l$ from \eqref{eq:ftbr}, and set $w^{l+1}=w^l+\alpha^l\Delta w^l$.
\State Set $l\gets l+1$.
\EndWhile
\State Writing $\tilde w^l=(\tilde u^l, \tilde v_1^l)$, extract $\tilde u^l$ as new starting point and go to Step~\ref{line:master_solver2}.
\label{line:initialization2}
\end{algorithmic}
\end{algorithm}

We summarize the implementation of extrapolation steps in Algorithm~\ref{alg:decomposition_algo_local}, as a variant of Algorithm~\ref{alg:decomposition_algo_general}. 
Here, $\theta(w;\mu)=\|F^C(w;\mu)\|$ and $\theta_0(u;\mu)=\|F_0(u;\mu)\|$ with $\na\hat{f}_1(x)=-\eta_1$.
%
Essentially, Steps~\ref{line:define_z}--\ref{line:initialization2} in Algorithm~\ref{alg:decomposition_algo_local} spell out details for Steps~\ref{line:decay_mu}--\ref{line:initialization} in Algorithm~\ref{alg:decomposition_algo_general}.
Because $v_1^l$ in Step~\ref{line:define_z} is a solution of \eqref{eq:obj_smoothing_proj}, we have $\|F_1(z^l;\mu^l)\|=0$ and therefore $\theta_0(u^l;\mu^l)=\theta(z^l;\mu^l)$ at the end of Step~\ref{line:define_z}.  
Consequently, the while-loop is entered at least once.
\SOUT{guaranteeing that $\mu$ is decreased and a new starting point $\tilde u^l$ is computed.}

\subsection{Fast local convergence}
We establish in this subsection the superlinear local convergence of {Algorithm}~\ref{alg:decomposition_algo_local}.
First, we prove a sensitivity result stating that, when $z^l$ is sufficiently close to $z^*$ and $\mu^l$ is sufficiently small, the set of constraints active in \eqref{eq:sqp_local} is identical to the set of constraints active in \eqref{eq:undecomp_Obj_local} at the optimal solutions.
We remark that the classical {sensitivity} result
from Robinson \cite{robinson1974perturbed} is not applicable here, because the Hessian of~\eqref{eq:sqp_local} diverges as $\mu^l\to0$.

Considering \eqref{eq:undecomp_Obj_local}, we let $\AAA^*$ and $\III^*$ denote the active and inactive index sets of $c_0$ at $x^*$ (with $x^*$ from Assumption~\ref{assump:local}),
$c_0^{\AAA^*}$ and $\lambda_0^{\AAA^*}$ denote the active constraints and corresponding multipliers at $x^*$, and $c_0^{\III^*}$ and $\lambda_0^{\III^*}$ are the inactive ones.

\begin{lemma}\label{lem:active_set}
    Suppose Assumption~\ref{assump:full_recourse} and~\ref{assump:local} hold, $z^l$ is sufficiently close to $z^*$ and $\mu^l$ is sufficiently close to 0.
    Then there exists a KKT point $\Delta z^l$ of~\eqref{eq:sqp_local} such that its active set is $\AAA^*$.
\end{lemma}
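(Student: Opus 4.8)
The plan is to use a perturbation/continuation argument comparing the QP~\eqref{eq:sqp_local} at $(z^l,\mu^l)$ to the ``limiting'' QP obtained at $(z^*,0)$, while carefully isolating the part of the Hessian that blows up. First I would observe that the only term in~\eqref{eq:sqp_local} that misbehaves as $\mu^l\to0$ is $\tfrac{1}{2}\Delta s_1^T(S_1^l)^{-1}\Lambda_1^l\Delta s_1$, because along the central path $s_{1j}^l\lambda_{1j}^l=\mu^l$, so $(S_1^l)^{-1}\Lambda_1^l$ has entries $\lambda_{1j}^l/s_{1j}^l$; by strict complementarity (Assumption~\ref{assump:local}) and $\mu^l\to0$, each $s_{1j}^l$ or $\lambda_{1j}^l$ converges to $0$, and the ratio $\lambda_{1j}^l/s_{1j}^l$ tends to $0$ on the inactive indices of the $c_1$-block (where $s_{1j}^*>0$) and to $+\infty$ on the active indices. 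The key trick is therefore a change of variables: rescale the $\Delta s_1$ coordinates, or equivalently absorb the divergent block by writing $\Delta s_1 = -(\nabla_{y_1}c_1^T\Delta y_1 + \nabla_{\tilde x_1}c_1^T\Delta \tilde x_1 + c_1(y_1^l;\tilde x_1^l))$ from the linearized equality constraint, so that $\Delta s_1$ is eliminated and the QP is rewritten purely in the reduced variables $(\Delta x,\Delta y_1,\Delta \tilde x_1)$ with a Hessian that stays bounded and, in the limit, equals the reduced Hessian of the barrier Lagrangian — which is positive definite by the second-order sufficiency conditions.

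Next I would set up the limiting problem. As $z^l\to z^*$ and $\mu^l\to0$, the data of the reduced QP (the linear term $\nabla_z\varphi(z^l;\mu^{l+1})$ after substitution, the constraint Jacobians $\nabla_x c_0(x^l)$, $\nabla_z\tilde c_1(z^l)$, and the constraint right-hand sides) converge to those of the reduced QP at $z^*$; note that $\nabla_z\varphi(z;\mu)$ includes the term $-\mu/s_{1j}$ which, after the substitution, converges to $-\lambda_{1j}^*$ (the correct multiplier), so the limiting linear term is exactly $\nabla_z\LLL$ evaluated with the true active multipliers. The limiting reduced QP then has $\Delta z = 0$ as its unique KKT point with active set precisely $\AAA^*$ — this follows because $(x^*,y_1^*)$ satisfies the KKT conditions of~\eqref{eq:undecomp_Obj_local}, LICQ holds there, strict complementarity holds, and the reduced Hessian is positive definite, so the standard one-step SQP identification property applies. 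The active-set part: LICQ at $z^*$ means the constraint gradients of $c_0^{\AAA^*}$ together with the full-rank equality block $\nabla_z\tilde c_1(z^*)$ are linearly independent, so the active-set of the QP solution is stable under small perturbations of the data.

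Then I would invoke a stability theorem for parametric QPs/strongly regular generalized equations — e.g., Robinson's strong regularity applied to the \emph{reduced} QP, which is legitimate because after elimination the Hessian is uniformly positive definite near the limit and LICQ + strict complementarity + second-order sufficiency give strong regularity of the KKT system of the reduced limiting problem. This yields: for $z^l$ close to $z^*$ and $\mu^l$ close to $0$, the reduced QP has a unique KKT solution $(\Delta x^l,\Delta y_1^l,\Delta \tilde x_1^l)$ close to $0$, with active set exactly $\AAA^*$; reconstructing $\Delta s_1^l$ via the eliminated equation and the multipliers gives a KKT point $\Delta z^l$ of the original~\eqref{eq:sqp_local} with active set $\AAA^*$, as claimed. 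The main obstacle — and the reason Robinson's result cannot be cited directly as the paper already notes — is precisely handling the divergence of $(S_1^l)^{-1}\Lambda_1^l$; everything hinges on showing the elimination of $\Delta s_1$ genuinely produces a \emph{bounded, uniformly positive definite} reduced Hessian in the limit (which requires strict complementarity so that the ``active'' $s_{1j}$ going to zero are exactly compensated by the structure of $\nabla_{y_1}c_1$, and uses that the reduced Hessian of the barrier problem converges to that of~\eqref{eq:undecomp_Obj_local}), and then verifying that the limiting linear term converges to the correct gradient of the Lagrangian rather than something that diverges. Once that technical reduction is done, the active-set identification is a standard consequence of strong regularity.
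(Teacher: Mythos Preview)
Your elimination step does not produce a bounded Hessian, and this is exactly why Robinson's result cannot be invoked. After substituting
\[
\Delta s_1 = -\nabla_{y_1}c_1^T\Delta y_1 - \nabla_{\tilde x_1}c_1^T\Delta\tilde x_1 - (c_1(y_1^l;\tilde x_1^l)+s_1^l)
\]
into the quadratic term $\tfrac12\Delta s_1^T(S_1^l)^{-1}\Lambda_1^l\Delta s_1$, the Hessian of the reduced QP acquires the block $A_1^T(S_1^l)^{-1}\Lambda_1^l A_1$, where $A_1=\nabla_{(y_1,\tilde x_1)}c_1$. On the indices $j$ where $c_{1j}$ is active at $z^*$, strict complementarity gives $s_{1j}^l\to 0$ and $\lambda_{1j}^l\to\lambda_{1j}^*>0$, so those diagonal entries of $(S_1^l)^{-1}\Lambda_1^l$ diverge to $+\infty$; LICQ ensures $A_1$ has full row rank on the active rows, so $A_1^T(S_1^l)^{-1}\Lambda_1^l A_1$ genuinely blows up rather than being ``compensated by the structure of $\nabla_{y_1}c_1$.'' The reduced QP is therefore still a singularly perturbed family, and strong regularity of the limiting problem does not transfer. (What actually happens in the limit is that the divergent penalty forces the active linearized constraints to hold as equalities---an emergent constraint, not a smooth limit of Hessians.)

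The paper avoids the blowup altogether by working in the unsymmetric primal-dual form. It fixes the active set $\AAA^*$ a priori, writes the corresponding KKT system $F^{\AAA^*}(w;\mu)=0$ with the complementarity row $s_1\circ\lambda_1-\mu e$, and notes that the Jacobian $\nabla F^{\AAA^*}$ contains the block $[\Lambda_1\ \ S_1]$ rather than $(S_1)^{-1}\Lambda_1$; this Jacobian is bounded and, by Assumption~\ref{assump:local}, nonsingular at $(w^*,0)$. Hence the Newton step $\Delta\hat w$ for $F^{\AAA^*}$ is well-defined and small for $w^l$ near $w^*$ and $\mu^l$ small. The paper then expands the Newton equations (its \eqref{localnewton1}--\eqref{localnewton8}) and checks directly that $(\Delta\hat z,\lambda_0^l+\Delta\hat\lambda_0,\lambda_1^l+\Delta\hat\lambda_1,\eta_1^l+\Delta\hat\eta_1)$ satisfies the KKT conditions of \eqref{eq:sqp_local} with active set exactly $\AAA^*$. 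No perturbation theorem for QPs is used; the candidate KKT point is constructed explicitly.
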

\begin{proof}
    \begin{sloppypar}
    We first note that the primal-dual solution $w^*$ of \eqref{eq:undecomp_Obj_local} satisfies $F^C(w^*;0)=0$.
    By strict complementarity in Assumption~\ref{assump:local}, there is a neighborhood $B(w^*,r_9)$ such that for $w\in {\bar B(w^*,r_9)}$  
    \end{sloppypar}
    \begin{equation}\label{eq:active_set_for_complementarity}
    \lambda_0^{\AAA^*}>0,\,\,c_0^{\III^*}(x)<0.
    \end{equation}
    We then define a modified version of $F^C$ with the fixed active set $\AAA^*$
    \begin{equation}
        F^{\AAA^*}(w;\mu):=\begin{pmatrix}
        \nabla_{x} \LLL(x)  \\
        -c_0^{\AAA^*}(x) \\
        \lambda_0^{\III^*} \\
        F_1(w;\mu)
    \end{pmatrix}.
    \end{equation}
    Note that $F^{\AAA^*}(w^*;0)=F^C(w^*;0)=0$.

    Since $\lambda_0^{\AAA^*}>0$ for all $w$ in the compact ball {$\bar B(w^*,r_9)$}, there exists a constant $C_3>0$ such that for any update $\|\Delta \lambda_0\|\leq C_3$, $\lambda_0^{\AAA^*}+\Delta\lambda_0^{\AAA^*}>0$.
    Similarly since $c_0^{\III^*}(x)<0$ for all $w\in {\bar B(w^*,r_9)}$ and $c_0$ is $C^2$, by Taylor's theorem there exists $C_4>0$ such that for any update $\|\Delta x\|\leq C_4$, $\nabla c_0^{\III^*}(x)^T\Delta x+ c_0^{\III^*}(x)<0$. 

    Next, we analyze the Newton step of solving $F^{\AAA^*}(w;\mu)$, defined by 
    \begin{equation}\label{eq:newton_step_complementary}
        \Delta \hat{w}:=-\nabla F^{\AAA^*}(w;\mu)^{-1}F^{\AAA^*}(w;\mu).
    \end{equation}
    Since $\mu$ enters $F^{\AAA^*}$ as $s_1\circ \lambda_1-\mu e$, $\nabla F^{\AAA^*}(w;\mu)$ is independent of $\mu$; see \cite{gould2001superlinear}.
    By Assumption~\ref{assump:local}, $\nabla F^{\AAA^*}(w^*;0)$ is nonsingular.
    Since $F^{\AAA^*}(w^*;\mu)=0$ and $F^{\AAA^*}$ is continuous with respect to $\mu$, there exists $\bar \mu>0$, $C_5>0$ and $r_{10}\leq r_9$, such that for $w\in B(w^*,r_{10})$ and $\mu\leq \bar \mu$, $\|\nabla F^{\AAA^*}(w;\mu)^{-1}\|\leq C_5$ and $\|F^{\AAA^*}(w;\mu)\|\leq \frac{1}{C_5}\min\{C_3,C_4\}$.
    Therefore, for $w\in B(w^*,r_{10})$ and $\mu\leq \bar \mu$,
    \begin{equation}\label{ineq:newton_step_bound}
        \|\Delta \hat{w}\|\leq \|\nabla F^{\AAA^*}(w;\mu)^{-1}\|\|F^{\AAA^*}(w;\mu)\|\leq \min\{C_3,C_4\}.
    \end{equation}
    As a result, $w+\Delta \hat{w}$ satisfies~\eqref{eq:active_set_for_complementarity}.

    Finally, we prove the following claim: for $w^l\in B(w^*,r_{10})$ and $\mu^l\leq \bar\mu$, $\Delta \hat{z}^l$ corresponding to $\Delta \hat{w}=(\Delta \hat{z},\Delta\hat{\lambda}_0,\Delta\hat{\lambda}_1,\Delta\hat{\eta}_1)$ discussed above computed at $(w^l;\mu^{l+1})$ is a local solution of~\eqref{eq:sqp_local} with the active set $\AAA^*$.

    To prove the claim, it suffices to show $(\Delta \hat{z},\lambda_0^l+\Delta \hat{\lambda}_0^l,\lambda_1^l+\Delta \hat{\lambda}_1^l,\eta_1^l+\Delta \hat{\eta}_1^l)$ is a KKT point of~\eqref{eq:sqp_local}.
    
    Expanding the Newton system~\eqref{eq:newton_step_complementary}, we have the equivalent set of equations:
    \begin{subequations}
    \begin{align}
        \nabla_{xx}^2\LLL\Delta \hat{x}^l+\nabla_x c_0\Delta \hat{\lambda}_0^l-\Delta \hat{\eta}_1^l+\nabla_x \LLL &=0 \label{localnewton1}\\
        \nabla_x c_0^{\AAA^*}(x^l)^T\Delta \hat{x}^l+c_0^{\AAA^*}(x^l) &=0 \label{localnewton2}\\
        (\Delta{\hat{\lambda}_0^{\III^*}})^l+(\lambda_0^{\III^*})^l &=0 \label{localnewton3}\\
         \nabla_{y_1y_1}^2\LLL\Delta\hat{y}_1^l+\nabla_{y_1\tilde{x}_1}^2\LLL\Delta\hat{\tilde{x}}_1^l+\nabla_{y_1}c_1\Delta\hat{\lambda}_1^l+\nabla_{y_1}\LLL &=0 \label{localnewton4}\\
         \nabla_{\tilde{x}_1\tilde{x}_1}^2\LLL\Delta\hat{\tilde{x}}_1^l+\nabla_{\tilde{x}_1y_1}^2\LLL\Delta\hat{y}_1^l+\nabla_{\tilde{x}_1}c_1\Delta\hat{\lambda}_1^l+\Delta\hat{\eta}_1^l+\nabla_{\tilde{x}_1}\LLL &=0 \label{localnewton5}\\
         \Lambda_1^l \Delta\hat{s}_1+S_1^l\Delta\hat{\lambda}_1^l +s_1^l\circ \lambda_1^l-\mu^{l+1} e &=0 \label{localnewton6}\\
         \nabla_{y_1}c_1^T \Delta\hat{y}_1^l+\nabla_{\tilde{x}_1}c_1^T\Delta\hat{\tilde{x}}_1^l+\Delta\hat{s}_1^l + c_1(y^l;x^l) + s_1^l & = 0 \label{localnewton7}\\
         -\Delta\hat{x}^l+\Delta\hat{\tilde{x}}_1^l+\tilde x_1^l-x^l &=0. \label{localnewton8}
    \end{align}
\end{subequations}
    \eqref{localnewton1}, \eqref{localnewton4}, and \eqref{localnewton5} are the stationarity in the KKT conditions of~\eqref{eq:sqp_local} with respect to $\Delta x$, $\Delta y_1$, and $\Delta \tilde x_1$;
    \eqref{localnewton2} indicates that for indices in $\AAA^*$, $\nabla_x c_0(x^l)^T\Delta x+c_0(x^l) \leq 0$ is active;
    \eqref{localnewton3} implies $(\lambda_0^{\III^*})^{l+1}=0$;
    \eqref{localnewton7} and \eqref{localnewton8} give the primal feasibility of the equalities in~\eqref{eq:sqp_local}.
    By~\eqref{ineq:newton_step_bound} and the discussion before~\eqref{eq:newton_step_complementary}, $c_0^{\III^*}(x^{l+1})<0$ and $(\lambda_0^{\AAA^*})^{l+1}>0$.
    It follows that primal-dual feasibility and complementary slackness of the inequalities are both satisfied in the KKT conditions of~\eqref{eq:sqp_local}.
    Furthermore, the active set of $\Delta \hat{z}^l$ is exactly $\AAA^*$.

    It remains to check the stationarity with respect to $\Delta s_1$.
    By~\eqref{localnewton6}, we have
    \[
        \mu^{l+1}e = \Lambda_1^l \Delta\hat{s}_1+S_1^l(\Delta\hat{\lambda}_1^l+ \lambda_1^l)  
       \Leftrightarrow  -\mu^{l+1}(S_1^l)^{-1}e+(S_1^l)^{-1}\Lambda_1^l \Delta\hat{s}_1+(\Delta\hat{\lambda}_1^l+ \lambda_1^l)=0,
    \]
    which is the stationarity of $s_1$ for~\eqref{eq:sqp_local} with multipliers as $\Delta\hat{\lambda}_1^l+ \lambda_1^l$.
    Therefore the KKT conditions of~\eqref{eq:sqp_local} are all verified and this finishes the proof.
\end{proof}

As a result of Lemma~\ref{lem:active_set}, for a local analysis near $w^*$ we are able to replace $c_0(x)\leq0$ in~\eqref{eq:undecomp_Obj_local} by $c^{\AAA^*}_0(x)=0$ without changing the steps that the algorithm takes.
After defining
$F_0^{\AAA^*}(w;\mu)=\begin{pmatrix}
    \nabla f_0(x)-\eta_1+\na c^{\AAA^*}_0(x)\lambda_0 \\
     c^{\AAA^*}_0(x)
\end{pmatrix}$ we can simplify~\eqref{eq:undecomp_optcond_local} as
\begin{equation}\label{eq:undecomp_optcond_primaldual}
F(w;\mu) = 
    \begin{pmatrix}
        F_0^{\AAA^*}(w;\mu) \\ F_1(w;\mu)
    \end{pmatrix}=0.
\end{equation}
It then follows that solving the extrapolation step $\Delta w^l$ as a solution of the equality-constrained variant of \eqref{eq:sqp_local} is equivalent to compute a Newton step of solving~\eqref{eq:undecomp_optcond_primaldual}:
\begin{equation}\label{eq:bigNewton}
\na F(w^l)^T\Delta w^l = - F(w^l,\mu^{l+1}).
\end{equation}
Here, the argument $\mu$ in $\na F(w^l)$ is intentionally dropped, since $\mu$ appears only as a constant in $F$.

$\Delta w^l$ is called an \textit{extrapolation step} following \cite{gould2001superlinear}, since it was shown in \cite{gould2001superlinear} that $\Delta w^l$ can be interpreted as the composition of a full Newton step at $w^l$ for $\mu^l$ and an extrapolation along the central path from $\mu^l$ to $\mu^{l+1}$.
Consequently, one can prove the superlinear local convergence by the analysis from basic primal-dual interior-point methods, e.g., \cite{byrd1997local}.
Finally, let us state the local convergence result.
\begin{theorem}[Superlinear local convergence]
\label{thm:superlinear_local}
Suppose Assumption~\ref{assump:full_recourse} and \ref{assump:local} hold and that Algorithm~\ref{alg:decomposition_algo_local}\SOUT{, with the computation of the extrapolation step described in Section~\ref{sec:extrapolation_implementation},} encounters an iterate $w^l$ sufficiently close to $w^*$ in Step~\ref{line:define_z} for a sufficiently small value of $\mu^l$.  Then the algorithm will remain in the while-loop and $w^l$ converges to $w^*$ at a superlinear rate.
\end{theorem}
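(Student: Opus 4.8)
The plan is to reduce the extrapolation update to a plain Newton iteration on a square nonlinear system and then run the standard local analysis of primal--dual interior‑point methods. By Lemma~\ref{lem:active_set}, once $(w^l,\mu^l)$ is close enough to $(w^*,0)$ the solution $\Delta z^l$ of \eqref{eq:sqp_local} has active set $\AAA^*$ (and the $(S_1^l)^{-1}\Lambda_1^l$ term together with second‑order sufficiency makes \eqref{eq:sqp_local} strictly convex on the null space of the active constraints, so this is the point the solver returns), hence the extrapolation step $\Delta w^l$ coincides with the Newton step \eqref{eq:bigNewton} for $F(\,\cdot\,;\mu^{l+1})=0$, with $F$ as in \eqref{eq:undecomp_optcond_primaldual}. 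Writing $F(w;\mu)=\Phi(w)-\mu\hat e$, where $\Phi:=F(\,\cdot\,;0)$ and $\hat e$ is the indicator of the $s_1\circ\lambda_1$ block, $\na\Phi$ is independent of $\mu$, and by Assumption~\ref{assump:local} $\na\Phi(w^*)$ is nonsingular with $\na\Phi$ locally Lipschitz near $w^*$. The implicit function theorem then furnishes a $C^1$ central path $w(\cdot)$ with $\Phi(w(\mu))=\mu\hat e$, $w(0)=w^*$, and $\na\Phi(w^*)^{-1}\hat e\neq0$, so $\|w(\mu)-w^*\| = \Theta(\mu)$ and $\|w(\mu^l)-w(\mu^{l+1})\| = O(\mu^l)$ for small $\mu$.

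Next I would argue by induction, starting from the iterate $w^l$ at Step~\ref{line:define_z}, for which $F_1(w^l;\mu^l)=0$ and $\theta_0(u^l;\mu^l)\le c_0\mu^l$, so $\theta(w^l;\mu^l)\le c_0\mu^l$ and the while‑loop is entered. Assume $\theta(w^l;\mu^l)\le c_0\mu^l$ and $w^l$ near $w^*$; then the inverse‑function bound for $\Phi$ gives $\|w^l-w(\mu^l)\| = O(\mu^l)$. One first shows $\alpha^l=1$: strict complementarity forces each $s_{1j}(\mu),\lambda_{1j}(\mu)$ to be $\Theta(\mu)$ for the vanishing components and $\Theta(1)$ for the others, so the relative step sizes $|\Delta s_{1j}^l|/s_{1j}^l$ and $|\Delta\lam_{1j}^l|/\lam_{1j}^l$ are bounded by $1-\mu^{l+1}/\mu^l+o(1)$, which together with the choice of $\tau^l$ makes the fraction‑to‑the‑boundary test \eqref{eq:ftbr} nonbinding. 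With $\alpha^l=1$, $w^{l+1}=w^l+\Delta w^l$ is a full Newton step for $\Phi(\cdot)=\mu^{l+1}\hat e$ (root $w(\mu^{l+1})$), whence
\begin{equation*}
\|w^{l+1}-w(\mu^{l+1})\| \le \tfrac{LM}{2}\,\|w^l-w(\mu^{l+1})\|^2 \le \tfrac{LM}{2}\bigl(\|w^l-w(\mu^l)\|+\|w(\mu^l)-w(\mu^{l+1})\|\bigr)^2 = O\bigl((\mu^l)^2\bigr),
\end{equation*}
with $M\ge\|\na\Phi^{-1}\|$ and $L$ the Lipschitz constant of $\na\Phi$ near $w^*$. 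Converting back, $\theta(w^{l+1};\mu^{l+1}) = O(\|w^{l+1}-w(\mu^{l+1})\|) = O((\mu^l)^2)$, and since $\mu^{l+1}\ge(\mu^l)^{c_{\mu,2}}$ with $c_{\mu,2}<2$, for $\mu^l$ small this is $\le c_0\mu^{l+1}$. This closes the induction: the while‑loop in Algorithm~\ref{alg:decomposition_algo_local} never exits, $w^l$ stays near $w^*$, and $\mu^l\to0$ (superlinearly, since $\mu^{l+1}\le(\mu^l)^{c_{\mu,2}}$, $c_{\mu,2}>1$).

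For the rate of $w^l\to w^*$ I would refine the Newton estimate: each $w^l$ with $l$ large is itself obtained from a full Newton step for $\Phi(\cdot)=\mu^l\hat e$ from $w^{l-1}$, so $\|w^l-w(\mu^l)\| = O((\mu^{l-1})^2) = O((\mu^l)^{2/c_{\mu,2}}) = o(\mu^l)$, and therefore $\|w^l-w^*\| = \|w(\mu^l)-w^*\|+o(\mu^l) = \Theta(\mu^l)$ using $\na\Phi(w^*)^{-1}\hat e\neq0$. Hence
\begin{equation*}
\frac{\|w^{l+1}-w^*\|}{\|w^l-w^*\|} = \Theta\!\left(\frac{\mu^{l+1}}{\mu^l}\right) = \Theta\bigl((\mu^l)^{c_{\mu,2}-1}\bigr)\to0,
\end{equation*}
i.e.\ superlinear convergence (indeed $\|w^{l+1}-w^*\| = O(\|w^l-w^*\|^{c_{\mu,2}})$). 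This reuses the extrapolation interpretation of \cite{gould2001superlinear}, the local interior‑point machinery of \cite{byrd1997local}, and the superlinear $\mu$‑schedule of \cite{wachter2006implementation}.

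\textbf{Main obstacle.} The delicate step is verifying $\alpha^l=1$, i.e.\ that \eqref{eq:ftbr} does not truncate the step: this requires the precise componentwise decay rates of $s_1(\mu)$ and $\lambda_1(\mu)$ along the central path (from strict complementarity) matched against the schedules for $\mu^{l+1}$ and $\tau^l$. A secondary care point is the lower bound $\|w^l-w^*\| = \Omega(\mu^l)$ needed for the rate statement, which forces one to sharpen the Newton contraction to $o(\mu^l)$ rather than merely $O(\mu^l)$, using $2/c_{\mu,2}>1$; the remaining bookkeeping (matching constants so that $\theta(w^{l+1};\mu^{l+1})\le c_0\mu^{l+1}$ and keeping all iterates in the neighborhood where Lemma~\ref{lem:active_set} and the Jacobian bounds apply) is routine.
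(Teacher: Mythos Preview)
Your approach is exactly the paper's: invoke Lemma~\ref{lem:active_set} to identify the active set so that the extrapolation step becomes the Newton step \eqref{eq:bigNewton}, then run the local interior-point analysis. The paper's proof is two sentences that defer entirely to \cite{byrd1997local}; you instead spell out the central-path-plus-Newton-contraction argument explicitly, which is the content of that reference. So the route is the same, just unpacked.

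There is, however, a genuine gap in your fraction-to-the-boundary step. You correctly compute that the relative decrease on the vanishing components satisfies $|\Delta s_{1j}^l|/s_{1j}^l = 1-\mu^{l+1}/\mu^l + o(1)\to 1$, and then assert that ``together with the choice of $\tau^l$'' the test \eqref{eq:ftbr} is nonbinding. But the paper's stated schedule is $\tau^l=\min\{\tau^{\max},\mu^{l+1}\}\to 0$, and the condition for $\alpha^l=1$ is $|\Delta s_{1j}^l|/s_{1j}^l\le\tau^l$; with the left side tending to $1$ and the right side tending to $0$, this fails, and the step is truncated. The standard choice (as in \cite{wachter2006implementation} and \cite{byrd1997local}) is the opposite direction, e.g.\ $\tau^l=\max\{\tau_{\min},1-\mu^l\}\to 1$, under which your argument goes through verbatim. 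The paper's formula for $\tau^l$ is almost certainly a typo, and because its own proof outsources the entire step-size analysis to \cite{byrd1997local} (which carries its own $\tau$-schedule) the inconsistency never surfaces there; but since you are making the argument explicit, you should state that you require $\tau^l\to 1$ and note the discrepancy with Step~\ref{line:decay_mu_local}, rather than claim the paper's $\tau^l$ suffices. You rightly flag this as the ``main obstacle,'' but the resolution you sketch does not match the algorithm as written.
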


\begin{proof}
By Lemma~\ref{lem:active_set}, the QP subproblem solver implicitly identifies the active constraints $c^{\AAA^*}_0$ at $z^*$.  
As a consequence, the steps calculated by the algorithm satisfy \eqref{eq:bigNewton} and the while loop executes the basic interior-point algorithm analyzed in \cite{byrd1997local}.  Assumption~\ref{assump:local} implies the assumptions necessary for the analysis in \cite{byrd1997local}, and the discussions in Section 4 and 5 in \cite{byrd1997local} imply the claim of this theorem.
\end{proof}

\subsection{Extrapolation step within the decomposition framework}
\label{sec:extrapolation_implementation}
In this section, we discuss how to efficiently compute $\Delta w^l$.
Solving~\eqref{eq:sqp_local} involves the master and all subproblem variables and can become extremely large.
Our decomposition technique allows the parallel solution of all subproblems efficiently. We show that a Schur complement approach makes it possible to reuse the computations 
in an implementation of Algorithm~\ref{alg:decomposition_algo_general}. 
As a result, very little programming effort\Footnote{\ew{computing effort (programming effort could be about coding, but here we are talking about the actual computation)}} is required to integrate an extrapolation step.


Let $\AAA$ be the active index set of~\eqref{sqp_local_ineq} at the current iterate $w$, and write $F_0(w;\mu)=\begin{pmatrix}
    \nabla f_0(x)-\eta_1+\na c^{\AAA}_0(x)\lambda_0 \\
     c^{\AAA}_0(x)
\end{pmatrix}$.
Therefore,\SOUT{as established in the proof of Lemma~\ref{lem:active_set},} the solution of~\eqref{eq:sqp_local} is equivalent to the Newton step in~\eqref{eq:bigNewton} with $\AAA$.
Omitting arguments and iteration counters, using the Schur complement, the solution $\Delta w=(\Delta u, \Delta v_1)$ can be calculated by:
\begin{subequations}
    \begin{align}
        \na_{v_1}F_1^T \cdot \Delta v_1^{(1)} & = - F_1 \label{eq:schur1}\\
        \left( \na_uF_0 - \na_{v_1}F_0(\na_{v_1}F_1)^{-1}\na_{u}F_1^T \right)^T \cdot \Delta u & = - F_0 - \na_{v_1}F_0\cdot \Delta v_1^{(1)} \label{eq:schur2}\\
        \na_{v_1}F_1^T \cdot\Delta v_1^{(2)} & = - \na_u F_1 \cdot \Delta u \label{eq:schur3}\\
        \Delta v_1 & =\Delta v_1^{(1)} + \Delta v_1^{(2)}. \label{eq:schur4}
    \end{align}
\end{subequations}

As before, we assume that $\na_{v_1}F_1$ is nonsingular (see Section~\ref{sec:nondegenerate_justification}).
By the definition of $F_1$, \eqref{eq:schur1} is identical to the linear system (Step \ref{line:Newton_step} in Algorithm~\ref{alg:newton}) that is solved internally in the interior-point subproblem solver, and can therefore be computed without additional programming efforts.

Considering \eqref{eq:schur2}, we note that
\[
\na_{v_1}F_0 = \na_uF_1^T = 
\begin{bmatrix}
    0 & 0 & 0 & 0 & -I\\
    0 & 0 & 0 & 0 & 0
\end{bmatrix}.
\]
Writing out $\na_{v_1}F_0(\na_{v_1}F_1)^{-1}\na_{u}F_1^T$ in detail shows that this matrix is zero except for one block, which we denote as $-\tilde H_1$.  The computational procedure to obtain $-\tilde H_1$ is identical to computing $\na^2\hat f_1(x,\mu)$ via \eqref{eq:na_vstar}, again without additional programming efforts.
Letting $\tilde g_1=-(\eta_1 + \Delta\tilde\eta_1)$, \eqref{eq:schur2} becomes
\[
\begin{bmatrix}
    \na_{xx}^2 \LLL(x,\lam_0) + \tilde H_1 & \na c^\AAA_0(x) \\
    \na c^\AAA_0(x)^T & 0
\end{bmatrix}
\begin{pmatrix}
    \Delta x \\ \Delta \lam_0    
\end{pmatrix}
=-
\begin{pmatrix}
    \na f_0(x)+\tilde g_1+ \na c^\AAA_0(x)\lam_0\\
     c^\AAA_0(x)
\end{pmatrix}.
\]
Since $\mathcal A$ is assumed to be the active set for~\eqref{sqp_local_ineq}, this is equivalent to computing a stationary point for
\begin{subequations}\label{eq:QP_extrapolation}
\begin{align}
    \min_{\Delta x}\quad & (\na f_0(x) +\tilde g_1)^T\Delta x + \tfrac12 \Delta x^T (\na_{xx}^2 \LLL(x,\lam_0) + \tilde H_1)\Delta x\\
    \text{{\rm s.t.}}\quad & \na c_0(x)^T\Delta x + c_0(x) \leq 0,\label{eq:QP_extrapolation2}
\end{align}
    \end{subequations}
which is the SQP step computation (see~\eqref{eq:sqp_qp}), except that it uses a ``fake'' subproblem gradient $\tilde g_1$ and Hessian $\tilde H_1$ of the subproblem.
{Crucially, although $\AAA$ is introduced for derivation of~\eqref{eq:QP_extrapolation}, we do not require $\AAA$ for computing $\tilde g_1$ and $\tilde H_1$.}
%
%
Since this QP subproblem is already part of the SQP solver, no additional programming work is required.
If the SQP method uses a trust region, it is well known that the trust region is inactive close to the optimal solution under Assumption~\ref{assump:local} and does not affect the solution \cite{conn2000trust}.

Solving \eqref{eq:schur3} can again be accomplished with the internal linear algebra in the interior-point solver, with $\Delta u$ sent from the master solver.
Finally, the overall subproblem step in \eqref{eq:schur4} is sent to the master problem solver so that it is able to compute $\theta(w^l;\mu^l)$ for Step~\ref{line:while} of Algorithm~\ref{alg:decomposition_algo_local}.
\SOUT{Note that, importantly, the explicit knowledge of $\AAA$ is not required by the algorithm throughout the entire procedure.}

\section{Numerical experiments}
\label{sec:numerical}

\subsection{Implementation}
We utilized the C++ implementation of the decomposition algorithm developed as part of Luo's thesis \cite{luo2023efficient}.
The outer loop of Algorithm~\ref{alg:decomposition_algo_general} was run with $\mu_0=0.1$, $c_0=0.1$, $c_{\mu,1}=0.2$ and $c_{\mu,2}=1.5$.  The algorithm terminates when the smoothing parameter reaches $\mu_{tol}=10^{-6}$, where Step~\ref{line:decay_mu} is implemented as $\mu^{l}\gets \max\{\min\{c_{\mu,1}\mu^{l},(\mu^{l})^{c_{\mu,2}}\},\mu_{tol}\}$.

The master problem solver is an advanced version of the S$\ell_1$QP method analyzed in Section~\ref{sec:SQP_global_convergence_fixed_mu} that includes means to update the penalty parameter $\pi$ in \eqref{eq:phi}; for details see \cite[Chapter 3]{luo2023efficient}. The QP subproblems are solved with the primal-dual interior-point method {\tt Ipopt} \cite{wachter2006implementation}.
{\tt Ipopt} is also used to solve the subproblems.  
Due to the object-oriented design of {\tt Ipopt}, one can easily access the internal linear algebra routines in {\tt Ipopt} and use them to efficiently solve \eqref{eq:na_vstar} and similar systems.

When the smoothing parameter is decreased in Step~\ref{line:decay_mu}, we begin the extrapolation procedure detailed in Algorithm~\ref{alg:decomposition_algo_local} in Section~\ref{sec:local_convergence}.

Before starting Algorithm~\ref{alg:decomposition_algo_general}, our implementation solves the first-stage problem once\SOUT{with {\tt Ipopt}}, where the shared variables with the subproblems are fixed.\SOUT{ to the user-given initial values.}
This presolve provides a good primal-dual starting point for Algorithm~\ref{alg:decomposition_algo_general}.

The C++ implementation in \cite{luo2023efficient} includes several interfaces, including a convenient AMPL interface that allows one to pose the master problem and the subproblems as separate AMPL models \cite{fourer1990ampl}.  In addition, the subproblems can be solved in parallel with multiple threads using OpenMP.  The experiments reported here were executed on a Linux desktop with 32GB of RAM and 2.9GHz 8-core Intel Core i7-10700 CPU.

{
In addition to the results below, we also ran our algorithm on those small-scale examples in Section~\ref{sec:smoothing} and~\ref{sec:behaviors_nonconvex}; see Appendix~\ref{appendix:supp_experiments}.
}

\subsection{Two-stage quadratically constrained quadratic programs} 
\label{sec:numerical_qcqp}
We explore the performance of the decomposition algorithm using large-scale instances with nonconvex subproblems.  We randomly generated Quadratically Constrained Quadratic Programs (QCQP) instances in the following form:
\begin{equation}
\begin{aligned}
   \min_{x} \quad & \half x^T Q_0 x + c_0^T x + \sum_{i=1}^N \hat f_i(x)\\ 
           \text{s.t.}\quad &  \half x^T Q_{0j} x + c_{0j}^T x +r_{0j} \leq 0, \quad \quad j = 1, \ldots, m_0, \end{aligned}
\end{equation}
with  
\begin{equation}
\begin{aligned}
    \hat f_i(x) =\min_{y_i,\tilde x_i, p_i,t_i} \quad  & \half y_i^T Q_i y_i + c_i^T y_i + \rho \sum_{{j=1}}^{n_c}(p_{ij} {+} t_{ij})\\
    \text{s.t.}\quad &  \half y_i^T Q_{ij} y_i + c_{ij}^T y_i + b_{ij}^T \tilde x_i +r_{ij} \leq 0, \quad j = 1, \ldots, m_i \\
    & Px-\tilde x_i = p_i - t_i ,\quad -50\leq y_i\leq 50, \quad p_i,t_i\geq0.
\end{aligned}
\end{equation}
Here, $r_{ij}\in\RR$, $c_{ij}\in\RR^{n_i}$, $b_{ij}\in\RR^{n_c}$, where $n_c$ is the number of first-stage variables appearing in the second-stage. The projection matrix $P=[I_{n_c}\, 0_{n_0-n_c}]$ extracts $n_c$ first-stage variables corresponding to the copy $\tilde x_i$.  
The slack variables $p_i$ and $t_i$ are penalized in the objective with the weight $\rho$,  so that the subproblems are always feasible for any $x$.
In our experiments we set $\rho=100$, which is large enough to ensure that $Px=\tilde x_i$ at the optimal solution.
The matrix $Q_0$ is a diagonal matrix with entries between 0.1 and 1, and $Q_i$ ($i=1,\ldots N$) are diagonal with entries between -1 and 1. 
See the full procedure for generating the test data in \cite[Algorithm~10]{luo2023efficient}.\SOUT{with the difference that the diagonal entries of $Q_i$ are chosen between 0.1 and 1 in that procedure. 
with the density parameter $d$ set as 0.05.}

\begin{figure}[t]
    \centering
    \includegraphics[width = 0.75\textwidth]{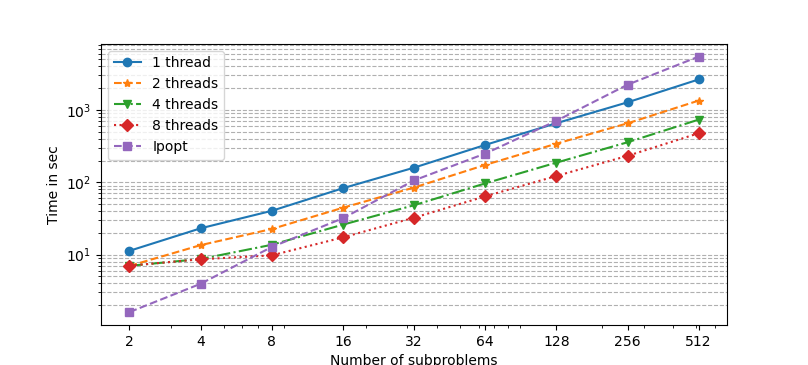}
    \caption{Wallclock computation times for QCQP instances.}
    \label{fig:QCQP_times}
\end{figure}

We ran our experiments with $n_i=250$ and $m_i=500$ for each $i$ and $n_c=10$.
The monolithic formulation \eqref{eq:undecomp_Obj} of the largest instances has 128,000 variables and 256,500 constraints, with a total of 5,638,500 nonzeros in the constraint Jacobian and 128,250 nonzeros in the Lagrangian Hessian.
Figure~\ref{fig:QCQP_times} shows computation time as a function of the number of subproblems, averaged over 10 runs with random data.
With less than 0.1s, the time for the initial master problem presolve is negligible.

The decomposition algorithm was run in parallel with 1, 2, 4, and 8 threads.  It can be observed that the computation time increases linearly with the number of subproblems.  
\SOUT{The parallel speedup is also very favorable.}On average, for the largest instances with $N=512$, the computation time was reduced by a factor of 2.0, 3.6, and 5.5 for the 2, 4, and 8 threads respectively, compared to the single-thread performance.

To showcase the computational benefit of the decomposition approach, Figure~\ref{fig:QCQP_times} also includes the computation time required by {\tt Ipopt} to solve the undecomposed monolithic instances.  As expected, for small instances, {\tt Ipopt} is much faster, but the computation increases at a rate of about $\mathcal{O}(N^{1.5})$ as the size of the problem grows.
As a consequence, the decomposition method is faster than {\tt Ipopt} when $N>128$, and when 8 threads are available, it is already faster when $N=8$.

The worse-than-linear increase in the time of {\tt Ipopt} partially stems from a rise in {\tt Ipopt} iterations, averaging 27.7 for $N=1$ to 193.5 for $N=512$, due to the nonnegative curvature encountered during the optimization.
In contrast, the iteration counts for the decomposition method remain unaffected by the problem size, with 20-22 SQP iterations and 233-260 {\tt Ipopt} iterations per second-stage problem across all sizes.
%
%
This indicates that an approach that parallelizes linear algebra within an interior-point method applied directly to the monolithic formulation \cite{lubin2011scalable,zavala2008interior} may scale less favorably with $N$ than the proposed framework for nonconvex problems.
Despite the nonconvexity of the problem, the final objective values of {\tt Ipopt} and our algorithm are identical.
%

\section{Conclusions}
\Footnote{AW: Still to do}
In this work, we studied the convergence properties of a framework for nonlinear nonconvex two-stage optimization with nonlinear constraints.
The approach can be extended to instances where \eqref{eq:intro_master} and \eqref{eq:intro_subproblem} include equality constraints, as our analysis remains valid provided the gradients of the constraints in the second-stage are linearly independent.
Our method allows flexibility in extending the algorithm by substituting master and subproblem solvers in Algorithm~\ref{alg:decomposition_algo_general} with off-the-shelf options.  For example, an interior-point solver could be applied to the master problem. 
Furthermore, the smoothing of the second-stage problem, which relies on applying the implicit function theorem, could be applied to other problem structures that give rise to perturbed optimality conditions, such as those with second-order cones of semi-definite matrix constraints.
%
Finally, we remark that several techniques developed here might also be applicable to nonconvex min-max and bilevel optimization, but a detailed exploration of these extensions is left for future research.



\bibliographystyle{siamplain}
\bibliography{references}

\newpage
\appendix
\section{Supplementary experimental details}\label{appendix:supp_experiments}
We verify the performance and theory of our proposed algorithm on the examples introduced in Section~\ref{sec:smoothing} and~\ref{sec:behaviors_nonconvex}.
In some cases, the subproblem solver fails to converge for some trial points $x^k+p^k$ of the master problem algorithm.  
For instance while solving Example~\ref{ex:bilinear}, {\tt Ipopt} converges to infeasible stationary points of the subproblem in a few early iterations. 
In that event, the master S$\ell_1$QP solver rejects such trial points and reduces the trust region radius.  
In this way, the trust region radius eventually becomes sufficiently small, so that the warm start strategy guarantees the subproblem solutions correspond to one consistent solution map; see Proposition~\ref{prop:f_is_C2_single}.
\subsection{Small-scale examples from Section~\ref{sec:behaviors_nonconvex}}
\label{sec:numerical_small_scale}
\paragraph{Example~\ref{ex:sol_smoothing}}

In Section~\ref{sec:smoothing}, we introduced Example~\ref{ex:sol_smoothing} to demonstrate that solution smoothing might result in spurious nonconvexity.  When we ran the decomposition algorithm for this example, we observed that it converges to the true optimal solution $x^*=2$ and is not attracted to the spurious solution $x=0$.  
The reason is that the QP solver within the SQP method always finds the global minimizer $x=2$ of the nonconvex QP. 
%
However, Chapter 4.7.3 in \cite{luo2023efficient} presents an instance with linear subproblems in which the method converges to a spurious solution that is not a local minimizer.  In addition, solution smoothing induces negative curvature in the master problem which leads to  more SQP iterations compared to objective smoothing.

\paragraph{Example~\ref{ex:bilinear}}

If we start Algorithm~\ref{alg:decomposition_algo_general} with $x^0=0.4$ and use $y=0$ as the starting point for {\tt Ipopt} in the subproblem, {\tt Ipopt} converges to $\tilde y^0=-0.278$ and returns the corresponding values for $\hat f_1(x^0)$, $\na\hat f_1(x^0)$, and $\na^2\hat f_1(x^0)$.  The next iterate of the SQP solver is $x^1=2$ and {\tt Ipopt} computes $\tilde y^1=-1.89$.  After one additional iteration, the (relaxed) tolerance for the SQP solver is reached and $\mu$ is decreased.  Next, the extrapolation step $\Delta x$ of the SQP iterate is taken, but a single {\tt Ipopt} iteration does not satisfy the new tolerance.  Instead, the regular SQP algorithm is resumed, and after one iteration $\mu$ is decreased again. This is repeated one more time.  After that, the extrapolation step is accepted for each decrease of $\mu$ and the method converges towards $x^*=2$ and $y^*=-2$.  In all, the subproblem is solved 6 times, requiring a total number of 29 {\tt Ipopt} iterations.  Importantly, for all subproblem calls, {\tt Ipopt} returns optimal solutions corresponding to the red solution map in Figure~\ref{fig:linear_solution_maps_smooth}.

On the other hand, if we start the algorithm using $y=-2$ as the starting point for {\tt Ipopt}, {\tt Ipopt} converges to $\tilde y^0=-2.32$, corresponding to the blue solution map in Figure~\ref{fig:linear_solution_maps_smooth}.  The trial point in the next SQP iteration is $\tilde x^0=x^0+p^0=2$.  This time, when {\tt Ipopt} tries to solve the subproblem with $y=-2.32$ as the starting point, it fails to converge and reports that an infeasible stationary point is found.  Consequently, the SQP solver reduces the trust region and sends $\tilde x^1=1.2$ as the next trial point to {\tt Ipopt}.  {\tt Ipopt} fails again, so the SQP solver reduces the trust region again, with $\tilde x^2=0.8$ sent to {\tt Ipopt}.  This time, {\tt Ipopt} computes $\tilde y^2=-2.74$ as the subproblem solution.  From then on, the SQP solver only sends trial point $\tilde x^k<1$ to {\tt Ipopt}, which converges to solutions corresponding to the blue solution map, and the algorithm converges towards $x^*=1$ and $y^*=-3$.

\paragraph{Example~\ref{ex:curvature}}

{For this instance, Algorithm~\ref{alg:decomposition_algo_general}  converges towards $x^*=-1$ and $y^*=2$ when $x^0<0$ and $y^0\geq0$, and towards $x^*=-1$ and $y^*=-1$ when $x^0\leq-0.5$ and $y^0\geq0.1$, for instance.
When $x^0>0$, Algorithm~\ref{alg:decomposition_algo_general} appears to always converge towards $x^*=-1$ and $y^*=2$, even if $y^0$ is chosen very closely to $-1$.}


\section{Supplementary proofs}\label{appendix:supp_proofs}
The details of the induction argument in the proof of Lemma~\ref{lem:newton_region_of_attraction} are given by:
\begin{proof}
    \begin{sloppypar}
    We show by a strong form of induction that for $j=0,1,2,\cdots$, $\|v_i^{j+1}-v_i^*\|\leq (1-\frac{\beta_{\min}}{2})\|v_i^j-v_i^*\|$.
    Note that since $x,x^+\in B({\bar x},r_4)$, we have $\|v_i^0-v_i^*\|=\|v_i^*(x;(\bar x,\bar v_i))-v_i^*(x^+;(\bar x,\bar v_i))\|\leq C$ {as well as $\|v_i^0-\bar v_i\|=\|v_i^*(x;(\bar x,\bar v_i))-v_i^*(\bar x;(\bar x,\bar v_i))\|\leq C$}.
    This shows that the initial point is close to a local stationary point.
    {Furthermore, one has $\|(x^+,v_i^0)-(\bar x,\bar v_i)\|\leq \|x^+-\bar x\|+\|v_i^0-\bar v_i\|\leq 2r_3+C\leq r_2$ so that~\eqref{ineq:newton_decay} is applicable.}
    Therefore for $j=0$, one has that
    \end{sloppypar}
    \begin{equation}
    \label{eq:strong_induction_j=0}
    \begin{aligned}
        \|v_i^1-v_i^*\| &= \|v_i^0+\beta^0\Delta v_i^0-v_i^*\| \\
        & = \|\beta^0(v_i^0+\Delta v_i^0-v_i^*) +(1-\beta^0)(v_i^0-v_i^*) \| \\
        & \leq \beta^0\|\hat{v}_i^1-v_i^*\| +(1-\beta^0)\| v_i^0-v_i^* \| \\
        & \leq \tfrac{\beta^0}{2C}\|v_i^0-v_i^*\|^2 + (1-\beta^0)\| v_i^0-v_i^* \| \\
        & \leq \tfrac{\beta^0}{2}\| v_i^0-v_i^* \|+ (1-\beta^0)\| v_i^0-v_i^* \| \\
        & = \left( 1-\tfrac{\beta^0}{2} \right)\| v_i^0-v_i^* \| \leq \left( 1-\tfrac{\beta_{\min}}{2} \right)\| v_i^0-v_i^* \| ,
    \end{aligned}
    \end{equation}
    where the first inequality is from triangle inequality; the second follows from~\eqref{ineq:newton_decay}; and the third is from the property of $B(x,r_4)$ as argued in the paragraph above.
    Thus the statement is true for $j=0$.
    
    Next by a strong form of induction hypothesis, let us assume the inequality holds for $j=0,1,\cdots,J-1$.
    It then follows from $1-\frac{\beta_{\min}}{2}<1$ that, for $0\leq j\leq J-1$, $\|v_i^{j+1}-v_i^*\|$ is a decreasing sequence and thus 
    \begin{equation}
    \label{ineq:induction_hypo}
        \|v_i^{j+1}-v_i^*\|\leq C,\quad\text{for all }j\leq J-1.
    \end{equation}
    {It follows that for $j\leq J$, $\|(x^+,v_i^j)-(\bar x,\bar v_i)\|\leq \|x^+-\bar x\|+\|v_i^j-\bar v_i\|\leq 2r_3+\|v_i^j-v_i^*\|+\|v_i^*-\bar v_i\|\leq 2r_3+2C\leq r_2$, so that~\eqref{ineq:newton_decay} is applicable for all $j\leq J$.}
    Then, similar to the derivation of~\eqref{eq:strong_induction_j=0}, by~\eqref{ineq:newton_decay} and~\eqref{ineq:induction_hypo}
     \begin{equation}
    \begin{aligned}
        \|v_i^{J+1}-v_i^*\| & \leq \beta^J\|\hat{v}_i^{J+1}-v_i^*\| +(1-\beta^J)\| v_i^J-v_i^* \| \\
        & \leq \frac{\beta^J}{2C}\|v_i^J-v_i^*\|^2 + (1-\beta^J)\| v_i^J-v_i^* \| \\
        & \leq \left( 1-\tfrac{\beta^J}{2} \right)\| v_i^J-v_i^* \|  \leq \left( 1-\tfrac{\beta_{\min}}{2} \right)\| v_i^J-v_i^* \|,
    \end{aligned}
    \end{equation}
    which proves the inequality for $j=J$.
    Therefore $\|v_i^{j+1}-v_i^*\|\leq (1-\frac{\beta_{\min}}{2})\|v_i^j-v_i^*\|$ for all $j$ by induction.
\end{proof}

\end{document}